\documentclass{article}
\usepackage{amscd}
\usepackage{amsfonts}
\usepackage{amsmath}
\usepackage{mathrsfs}
\usepackage{amssymb}

\usepackage{amsthm}
\usepackage{mathtools}
\usepackage{MnSymbol}
\usepackage{multirow,bigdelim}
\usepackage{eucal}
\usepackage{url}
\usepackage{bm}
\usepackage[dvipdfmx]{hyperref}
\usepackage[dvipdfmx]{graphicx}
\usepackage[all]{xy}
\usepackage{color}
\usepackage{xcolor}
\usepackage{tikz}
\usetikzlibrary{arrows}
\usetikzlibrary{cd}
\usetikzlibrary{patterns}

\DeclareMathOperator{\diag}{diag}
\DeclareMathOperator{\Dyn}{Dyn}

\DeclareMathOperator{\id}{id}
\DeclareMathOperator{\inv}{inv}
\DeclareMathOperator{\Irr}{Irr}

\DeclareMathOperator{\Res}{Res}

\DeclareMathOperator{\Spec}{Spec}
\DeclareMathOperator{\type}{type}
\DeclareMathOperator{\sep}{sep}
\DeclareMathOperator{\CAlg}{CAlg}

\DeclareMathOperator{\Gal}{Gal}
\DeclareMathOperator{\GL}{GL}

\DeclareMathOperator{\SL}{SL}
\DeclareMathOperator{\SO}{SO}
\DeclareMathOperator{\Sp}{Sp}
\DeclareMathOperator{\SU}{SU}
\DeclareMathOperator{\Oo}{O}
\DeclareMathOperator{\Uu}{U}
\newcommand{\Set}{\mathcal{S}\mathrm{et}}
\newcommand{\Grp}{\mathcal{G}\mathrm{rp}}
\newcommand{\bC}{\mathbb{C}}
\newcommand{\bG}{\mathbb{G}}
\newcommand{\bN}{\mathbb{N}}
\newcommand{\bP}{\mathbb{P}}
\newcommand{\bQ}{\mathbb{Q}}
\newcommand{\bR}{\mathbb{R}}
\newcommand{\bZ}{\mathbb{Z}}
\newcommand{\cE}{\mathcal{E}}
\newcommand{\cF}{\mathcal{F}}

\newcommand{\cL}{\mathcal{L}}
\newcommand{\cO}{\mathcal{O}}
\newcommand{\cP}{\mathcal{P}}
\newcommand{\fg}{\mathfrak{g}}
\newcommand{\fh}{\mathfrak{h}}
\newcommand{\fq}{\mathfrak{q}}
\newcommand{\sD}{\mathscr{D}}

\theoremstyle{plain}
\newtheorem{thm}{Theorem}[subsection]
\newtheorem{cor}[thm]{Corollary}
\newtheorem{lem}[thm]{Lemma}
\newtheorem{prop}[thm]{Proposition}
\newtheorem{thmA}{Theorem}
\newtheorem{thmB}{Theorem}
\newtheorem{corC}{Corollary}
\newtheorem{corD}{Corollary}
\newtheorem{thmE}{Theorem}
\newtheorem{thmF}{Theorem}
\newtheorem{corG}{Corollary}

\theoremstyle{definition}

\newtheorem{cons}[thm]{Construction}
\newtheorem{defn}[thm]{Definition}
\newtheorem{ex}[thm]{Example}

\newtheorem{rem}[thm]{Remark}
\begin{document}
	\title{Half-integrality of line bundles on partial flag schemes of classical Lie groups}
	\author{Takuma Hayashi\thanks{Department of Pure and Applied Mathematics, Graduate School of Information Science and Technology, Osaka University, 1-5 Yamadaoka, Suita, Osaka 565-0871, Japan, hayashi-t@ist.osaka-u.ac.jp}}
	
	\date{}
	\maketitle
	\begin{abstract}
		
		In this paper, we develop a theory of Galois descent for equivariant line bundles on partial flag schemes. In particular, we study computational aspects of the classification of descent data of equivariant line bundles attached to characters of parabolic subgroups. As an application, we classify equivariant line bundles on partial flag schemes of the standard $\bZ\left[1/2\right]$-forms of classical Lie groups.
		
	\end{abstract}
	
	\section{Introduction}
	
	In representation theory of real reductive Lie groups and Lie algebras, we can find phenomena of descent. That is, a representation over the field $\bC$ of complex numbers is sometimes defined over the field $\bR$ of real numbers or its smaller subfields. Even we can find $k$-forms for subrings $k\subset\bC$. The aim of this paper is to study its geometric counterpart. More specifically, we work with the descent problem of equivariant line bundles on partial flag schemes. From the perspectives of abstract theory, this is an analog of \cite{MR1324207} Proposition 2.4. We also develop its computational aspects. 
	
	\subsection{Finite dimensional representations}
	The most trivial example of complex representations with real forms is that the trivial representation of a real group or a real Lie algebra. Another typical example is the adjoint representation. When it comes to a result for a specific group, the case of the compact Lie group $\SO(3)$ is easiest. In fact, the complex irreducible representations of $\SO(3)$ are given by the degree $n$ part of the graded commutative algebra $\bC\left[x,y,z\right]/(x^2+y^2+z^2)$ with $\deg x=\deg y=\deg z=1$, where $n$ runs through nonnegative integers. We obtain their real forms just by replacing $\bC$ with $\bR$. The conclusion is that every complex irreducible representation of $\SO(3)$ is defined over $\bR$. This realization even tells us that these representations are defined over $\bZ\left[1/2\right]$ with respect to the standard $\bZ\left[1/2\right]$-form of $\SO(3)$ as a group scheme, where $\bZ$ is the ring of integers. In \cite{E1914}, Cartan sorted complex irreducible finite dimensional representations of real Lie algebras $\fg$ into three types to establish a general classification scheme of real irreducible finite dimensional representations of $\fg$. He also reduced his scheme to the case of complex finite dimensional representations with fundamental highest weight when $\fg$ are semisimple, and wrote down the complete result of his scheme explicitly for each real simple Lie algebra. In \cite{MR102534}, Iwahori reestablished the results of \cite{E1914}. Using maximally split Cartan subalgebras, Fell gave a complete and explicit classification theorem on finite dimensional real irreducible representations of connected (simply connected semisimple) Lie groups in \cite{MR0209401}. Takeuchi translated Fell's work into purely Lie algebra theoretic arguments to classify real irreducible representations of real Lie algebras in \cite{takeuchi} Section 7 -- 13. Note that in Section 11 (resp.\ Section 12), he discussed the compact case, based on Malcev--Dynkin's method \cite{MR1611385} (resp.\ Fell--Dadok's method \cite{MR0209401} and \cite{MR773051}). Subsequently, Onishchik classified finite dimensional real irreducible representations of real semisimple Lie algebras in \cite{MR2041548} Section 8 by using fundamental Cartan subalgebras. For a connected reductive algebraic group $G$ over a field $F$ of characteristic zero and an irreducible representation $V'$ of $G$ over the algebraic closure of $F$, Borel and Tits constructed an obstruction class $\beta_{V'}$ to the rationality (i.e., existence of the datum of Galois descent) of $V'$ in \cite{MR207712} Section 12. We can regard that Fell and Onishchik gave ways to compute $\beta_{V'}$ for $F=\bR$. 
	
	Merkurjev and Tignol developed a geometric analog of the work of Borel and Tits in \cite{MR1324207}. The Borel--Weil--Bott theorem asserts that every irreducible representation of a complex reductive algebraic group appears as a cohomology of an equivariant line bundle on the flag variety. Moreover, this theorem tells us what the cohomology modules of line bundles on partial flag varieties are. Note that for a connected complex reductive algebraic group $G$ and a parabolic subgroup $P$, the $G$-equivariant line bundles on $G/P$ are the associated bundles $\cL_\lambda$ attached to characters $\lambda$ of $P$. However, this picture is not enough to geometrize the descent phenomenon of representations of $\SO(3)$ since $\SO(3)$ does not admit proper parabolic subgroups. They promote the above idea by using partial flag varieties (schemes) without base points which were introduced in \cite{MR0218364} (see also \cite{MR207712} 5.24 for the case that the base ring is a field). For example, we can identify closed points of the complex flag variety of a given complex reductive group with Borel subgroups. More strongly, the flag variety is the moduli space of Borel subgroups which literally makes sense over general bases even if the given reductive group scheme has no Borel subgroups (see \cite{MR0218363} Corollaire 5.8.3). For a general partial flag scheme, we need to discuss whether the corresponding conjugacy class of parabolic subgroups (called a type in \cite{MR0218364}) is defined over the base. In fact, the partial flag schemes are fibers of the morphism $t$ in \cite{MR0218364} Section 3.2. Merkurjev and Tignol constructed a morphism from a partial flag variety to a Severi-Brauer variety over the base field (\cite{MR1324207} Proposition 2.4). Moreover, they obtained line bundles on partial flag varieties over the base field by the pullback of the tautological bundle on the Severi-Brauer variety when the obstruction class is trivial. We also note that they considered the corresponding division algebras rather than the cocycle.
	
	The aim of this paper is to see the converse direction of the work of Merkurjev and Tignol over (Noetherian) commutative rings. Namely, we see that the line bundle on partial flag schemes admits a datum of Galois descent if and only if the corresponding cocycle vanishes in the Galois cohomology. Moreover, we classify forms of line bundles attached to a character of a parabolic subgroup. We thus also obtain representations of reductive groups over commutative rings (for example, $\bZ\left[1/2\right]$). To work with partial flag schemes, we also discuss a computational criterion to the descent problem of parabolic types attached to parabolic subgroups. We adopt a more direct approach to the descent problem in this paper than \cite{MR1324207} to give a numerical formula to the obstruction class.

	\subsection{$A_\fq(\lambda)$-modules}
	
	The descent phenomena of Harish-Chandra modules over fields of characteristic zero were studied in \cite{MR3770183} Section 3, 5, 6, 7. Especially, such arithmetic properties of Harish-Chandra modules are expected to be applicable to rationality and integrality of special $L$-values (\cite{MR3053412}, \cite{10.1093/imrn/rny043}, \cite{MR3770183}, \cite{MR3937337}, \cite{1604.04253}, \cite{MR3970997}).
	
	In geometric representation theory of real reductive Lie groups, most of complex irreducible Harish-Chandra modules are obtained by the localization and operations of twisted D-modules. For example, the $A_\fq(\lambda)$-modules are obtained as the space of global sections of the twisted D-module theoretic direct image of the line bundle on the closed $K$-orbit of the partial flag variety corresponding to the complex conjugate of $\fq$ (see \cite{MR910203} 4.3. Theorem and \cite{MR2945222} Theorem 5.1, Corollary 5.5).
	
	Fabian Januszewski proposed the idea that the smaller rings the geometric objects appearing in these constructions of representations are defined over, the smaller rings the resulting representations are correspondingly defined over. We follow his idea towards a geometric construction of nontrivial models of $A_\fq(\lambda)$-modules in the present paper and \cite{hayashijanuszewski}. That is, we developed the general theory of twisted D-modules over general base schemes in \cite{hayashijanuszewski} Section 1 -- 4. We also study the descent problem of rings of definition of closed $K$-orbits on partial flag schemes attached to models of $\theta$-stable parabolic subgroups in \cite{hayashijanuszewski} Section 5, where $\theta$ is (the model of) the Cartan involution. In this paper, we study nontrivial rings of definition of equivariant line bundles on partial flag schemes. As a result, we obtain tdos on partial flag schemes over smaller rings. Applying an analog of the geometric construction of $A_\fq(\lambda)$-modules over the complex numbers to our forms of tdos and closed $K$-orbits under our general theory of twisted D-modules, we obtain nontrivial models of $A_\fq(\lambda)$-modules (see \cite{hayashijanuszewski} Section 6 for details).
	
	To work with other kinds of representations by localization, the author works on $\bZ\left[1/2\right]$-forms of the $\SO(3,\bC)$-orbit decomposition of the complex flag variety of $\SL_3$ in \cite{hayashikgb}. A further generalization is in progress.
	
	\subsection{Main Results}
	
	In this paper, we develop a general theory of descent of rings of definition of line bundles on partial flag schemes. To make computations of examples easy, we restrict ourselves to the case of Galois descent:
	
	\begin{defn}[\cite{MR4225278} (14.20)]
		We say a finite faithfully flat homomorphism $k\to k'$ of commutative rings, equipped with an action of a finite group $\Gamma$ on $k'$ over $k$ is a Galois extension of Galois group $\Gamma$ if the map $k'\otimes_k k'\to\prod_{\sigma\in\Gamma} k';~a\otimes b\mapsto (a\sigma(b))$ is an isomorphism.
	\end{defn}
	
	When $\Gamma\cong\bZ/2\bZ$, we will denote the nontrivial element of $\Gamma$ by $\bar{\ }$. Our main example for $\Gamma\cong\bZ/2\bZ$ will be $\bZ\left[1/2\right]\subset\bZ\left[1/2,\sqrt{-1}\right]$.
	
	Let $k\to k'$ be a Galois extension of commutative rings or a (possibly infinite) Galois extension of fields of Galois group $\Gamma$. Suppose that we are given a reductive group scheme $G$ over $k$ and a character $\lambda$ of a parabolic subgroup $P'\subset G\otimes_k k'$. We wish to judge the existence of the $k$-forms of $(G\otimes_k k')/P'$ and the line bundle $\cL_\lambda\coloneqq (G\otimes_k k')\times^{P'}\lambda$ on $(G\otimes_k k')/P'$.
	
	To explain our main results, here we assume the following conditions for simplicity:
	
	\begin{enumerate}
		\renewcommand{\labelenumi}{(\roman{enumi})}
		\item $k$ and $k'$ are Noetherian;
		\item $\Spec k'$ is connected;
		\item We are given a maximal torus $H$ of $G$ such that $(G\otimes_k k',H\otimes_k k')$ is split;
		\item $P'$ contains the Borel subgroup $B'\subset G\otimes_k k'$ attached to a simple system $\Pi$ of $(G\otimes_k k',H\otimes_k k')$. 
	\end{enumerate}
	
	Write $\Pi'$ for the corresponding subset of $\Pi$. In the rest, write ${}^\sigma(-)$ for the Galois twist by $\sigma\in\Gamma$. Fix an element $w_\sigma$ of the normalizer subgroup of $H\otimes_k k'$ in $G(k')$ such that ${}^\sigma\Pi=w_\sigma\Pi$.
	
	To classify partial flag schemes over $k$, it will suffice to compute the Dynkin scheme $\Dyn G$ by definitions of $\Dyn G$ and of the scheme $\type G$ of parabolic types of $G$. We give an explicit description of $\Dyn G$ in Theorem \ref{thm:dynkin}. It leads to the result below:
	
	\begin{thmA}[Theorem \ref{thm:dynkin}, Example \ref{ex:settingsection4}]
		The following conditions are equivalent:
		\begin{enumerate}
			\renewcommand{\labelenumi}{(\alph{enumi})}
			\item The parabolic type of $P'$ is defined over $k$.
			\item We have ${}^\sigma P'=w_\sigma P' w^{-1}_\sigma$ for all $\sigma\in\Gamma$.
			\item We have ${}^\sigma \Pi'=w_\sigma \Pi'$ for all $\sigma\in\Gamma$.
		\end{enumerate}
		Moreover, every parabolic type of $G$ over $k$ is obtained as the type of such $P'$.
	\end{thmA}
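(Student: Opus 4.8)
The statement bundles together two claims: an equivalence of (a), (b), (c), and a surjectivity statement ("every parabolic type over $k$ arises this way"). I would reduce everything to properties of the Dynkin scheme $\Dyn G$ and the scheme $\type G$ of parabolic types, using the explicit description promised in Theorem \ref{thm:dynkin}. Recall from \cite{MR0218364} that for a reductive group scheme $G$ over $k$, the scheme $\type G$ of parabolic types is finite étale over $k$, and its geometric points are the subsets of the set of vertices of the Dynkin diagram; a parabolic subgroup of a geometric fiber determines such a subset, and $\type G$ over $k$ is the quotient of the Dynkin scheme data by the natural action. So a parabolic type is "defined over $k$" precisely when the corresponding point of $(\type G)\otimes_k k'$ descends to a $k$-point of $\type G$, i.e.\ is fixed by the $\Gamma$-action on $(\type G)(k')$ coming from Galois descent.

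\textbf{Step 1: (a) $\Leftrightarrow$ (c).}
Over $k'$, the maximal torus $H\otimes_k k'$ and the simple system $\Pi$ trivialize $\Dyn(G\otimes_k k')$, identifying its $k'$-points with the vertex set of the Dynkin diagram, and hence identifying $(\type G)(k')$ with the power set of $\Pi$; under this identification $P'$ goes to $\Pi'\subset\Pi$. By the functoriality in Theorem \ref{thm:dynkin}, the Galois action of $\sigma\in\Gamma$ on this trivialized picture is: first apply the abstract Galois twist ${}^\sigma(-)$, then transport back along the chosen $w_\sigma$ (which is exactly what makes ${}^\sigma\Pi = w_\sigma\Pi$). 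Therefore the point of $(\type G)(k')$ corresponding to $P'$ is $\Gamma$-fixed iff ${}^\sigma\Pi' = w_\sigma\Pi'$ for all $\sigma$, which by finite étale descent is equivalent to the type being defined over $k$. This is the heart of the argument and where I expect to spend the most care: one must check that the cocycle-type ambiguity in the choice of $w_\sigma$ (it is only well-defined up to the Weyl group stabilizer of $\Pi$, i.e.\ up to the element fixing $\Pi$ pointwise, hence trivial on $\Pi$) does not affect the condition, and that the descent datum on $\type G$ really is the one induced from $G$'s descent datum via the Dynkin scheme — this is essentially the content of Theorem \ref{thm:dynkin} but needs to be invoked precisely.

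\textbf{Step 2: (b) $\Leftrightarrow$ (c).}
The inclusion $P'\supset B'$ and $\Pi'\subset\Pi$ correspond, and $w_\sigma P' w_\sigma^{-1}$ is the parabolic subgroup containing $B'$ whose associated subset of $\Pi$ is $w_\sigma\Pi'$; likewise ${}^\sigma P'$ is a parabolic subgroup of $G\otimes_k k'$ containing ${}^\sigma B'$, and since ${}^\sigma\Pi = w_\sigma\Pi$ the subgroup ${}^\sigma B'$ is $w_\sigma B' w_\sigma^{-1}$ (the Borel attached to $w_\sigma\Pi$); so ${}^\sigma P'$ is the standard parabolic relative to $w_\sigma\Pi$ indexed by ${}^\sigma\Pi'$. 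Hence ${}^\sigma P' = w_\sigma P' w_\sigma^{-1}$ iff ${}^\sigma\Pi' = w_\sigma\Pi'$ as subsets. (One needs here only the standard bijection between parabolics containing a fixed Borel and subsets of the simple system, plus the compatibility of conjugation by $w_\sigma$ with this bijection.) Combining Steps 1 and 2 gives the equivalence of (a), (b), (c).

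\textbf{Step 3: surjectivity.}
Given a parabolic type $\tau$ over $k$, I must produce a $\lambda$-free $P'$ as in the setup realizing it. Base change $\tau$ to $k'$: since $\Spec k'$ is connected and $(G\otimes_k k', H\otimes_k k')$ is split, $\tau\otimes_k k'$ corresponds to a single subset $\Pi'\subset\Pi$ (pick the representative containing $B'$). Let $P'$ be the standard parabolic of $G\otimes_k k'$ attached to $\Pi'$; it contains $B'$ by construction, so conditions (iii), (iv) of the setup hold, and its type over $k$ is $\tau$ by the compatibility of base change with the type morphism. Because $\tau$ was already defined over $k$, condition (a) holds for this $P'$, so it is an allowed datum. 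This step is essentially formal once the dictionary of Step 1 is in place; the only thing to verify is that the standard parabolic attached to a $\Gamma$-stable subset is itself suitably compatible, which again follows from Theorem \ref{thm:dynkin}. The main obstacle throughout is bookkeeping of the Galois action on the combinatorial data and ensuring the chosen $w_\sigma$'s interact correctly with it; no deep input beyond \cite{MR0218364} and Theorem \ref{thm:dynkin} is needed.
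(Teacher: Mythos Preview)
Your approach is correct and essentially matches the paper's: the paper proves (a)$\Leftrightarrow$(c) via the explicit Galois action on $\Dyn G$ (Theorem \ref{thm:dynkin}), proves (b)$\Leftrightarrow$(a) via Proposition \ref{prop:wsigmaautomatic} (which, like your Step 2, reduces to the fact that parabolics containing a fixed Borel are determined by their subset of simple roots, citing \cite{MR0218364} Proposition 1.17), and handles surjectivity exactly as your Step 3. One small slip: in Step 2 you write that $w_\sigma P' w_\sigma^{-1}$ contains $B'$, but it actually contains $w_\sigma B' w_\sigma^{-1}={}^\sigma B'$; your subsequent sentences get this right and the conclusion is unaffected.
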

	
	In the rest, assume the above equivalent conditions. Therefore we obtain the $k$-form of $(G\otimes_k k')/P'$ which we will denote by $\cP_{G,x}$\footnote{The subscript $x$ indicates the type of $G$ over $k$ attached to $P'$.} in this section.
	
	\begin{thmB}[Theorem \ref{thm:descentofllambda}, Theorem \ref{thm:uniqueness}, Remark \ref{rem:associatedbundle}]\label{thmB}
		\begin{enumerate}
			\renewcommand{\labelenumi}{(\arabic{enumi})}
			\item Define
			\[\beta_\lambda\in H^2(\Gamma,(k')^\times)\]
			by 
			\[\beta_{\lambda}(\sigma,\tau)=
			\lambda(w_{\sigma\tau}^{-1}\sigma(w_\tau)w_\sigma).\]
			Then $\cL_\lambda$ admits a datum of Galois descent if and only if the following conditions are satisfied:
			\begin{enumerate}
				\item[(i)] ${}^\sigma\lambda=w_\sigma\lambda$ for $\sigma\in\Gamma$.
				\item[(ii)] $\beta_{\lambda}$ is trivial as a cohomology class.
			\end{enumerate}
			\item The set of isomorphism classes of descent data on $\cL_\lambda$ is a (possibly empty) principal $H^1(\Gamma,(k')^\times)$-set.
			\item If $k'$ is a PID, the equivariant line bundles on $\cP_{G,x}$ are obtained by the Galois descent of the associated bundles $\cL_\lambda$.
		\end{enumerate}
	\end{thmB}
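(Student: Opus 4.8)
The plan is to realize Galois descent along $k\to k'$ as the formation of $\Gamma$-equivariant objects, and to compute everything in the explicit model $\cL_\lambda=(G\otimes_k k')\times^{P'}\lambda$ using the fixed elements $w_\sigma$.

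\emph{Step 1: pin down the descent datum on $\cP_{G,x}$.} Since ${}^\sigma\Pi=w_\sigma\Pi$ and ${}^{\sigma\tau}\Pi={}^\sigma({}^\tau\Pi)=\sigma(w_\tau)w_\sigma\Pi$, the element $h_{\sigma,\tau}\coloneqq w_{\sigma\tau}^{-1}\sigma(w_\tau)w_\sigma$ lies in the normalizer of $H\otimes_k k'$ and fixes $\Pi$; as the Weyl group scheme of the split pair $(G\otimes_k k',H\otimes_k k')$ acts simply transitively on the set of bases, $h_{\sigma,\tau}\in H(k')\subset P'(k')$. I would then check that $\sigma_X\colon gP'\mapsto{}^\sigma g\,w_\sigma P'$ is a $G$-semilinear automorphism of $X'\coloneqq(G\otimes_k k')/P'$ and that $\sigma_X\circ\tau_X=(\sigma\tau)_X$ (this is exactly where $h_{\sigma,\tau}\in P'(k')$ enters), so that $(\sigma_X)_{\sigma\in\Gamma}$ is the descent datum realizing the $k$-form $\cP_{G,x}$ of Theorem~A.

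\emph{Step 2: part (1).} I would look for $G$-equivariant semilinear isomorphisms $\tilde\sigma\colon\cL_\lambda\to\cL_\lambda$ lifting $\sigma_X$; the natural candidate is $\tilde\sigma([g,v])=[{}^\sigma g\,w_\sigma,\sigma(v)]$. A direct computation shows that this descends to the quotient $(G\otimes_k k')\times^{P'}\lambda$ precisely when ${}^\sigma\lambda=w_\sigma\lambda$ as characters of ${}^\sigma P'=w_\sigma P'w_\sigma^{-1}$, i.e.\ exactly under hypothesis (i); and, granting (i), $\tilde\sigma$ is automatically $G$-equivariant and $\sigma$-semilinear on fibres. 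Because $G\otimes_k k'\to X'$ is faithfully flat, any $G$-equivariant semilinear isomorphism of $\cL_\lambda$ over $\sigma_X$ is determined by its restriction to the fibre over $eP'$, hence is of the form $c_\sigma\tilde\sigma$ for a unique $c_\sigma\in(k')^\times$; in particular the existence of any such isomorphism forces (i). Finally, the identity
\[\tilde\sigma\circ\tilde\tau([g,v])=[{}^{\sigma\tau}g\,\sigma(w_\tau)w_\sigma,(\sigma\tau)(v)]=[{}^{\sigma\tau}g\,w_{\sigma\tau}\,h_{\sigma,\tau},(\sigma\tau)(v)]=\lambda(h_{\sigma,\tau})\,\widetilde{\sigma\tau}([g,v])\]
gives $\tilde\sigma\circ\tilde\tau=\beta_\lambda(\sigma,\tau)\,\widetilde{\sigma\tau}$ with $\beta_\lambda(\sigma,\tau)=\lambda(h_{\sigma,\tau})$; associativity of composition then shows $\beta_\lambda\in Z^2(\Gamma,(k')^\times)$, and a general lift $(c_\sigma\tilde\sigma)_\sigma$ satisfies the cocycle condition iff $\delta(c^{-1})=\beta_\lambda$. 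Such $c$ exists iff $[\beta_\lambda]=0$, which is part (1).

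\emph{Step 3: part (2).} This is then formal: under (i), the descent data on $\cL_\lambda$ are the families $(c_\sigma)_\sigma$ in $(k')^\times$ with $\delta(c^{-1})=\beta_\lambda$, a set which---if nonempty---is a torsor under $Z^1(\Gamma,(k')^\times)$; two such give isomorphic descended line bundles iff the associated cocycles differ by a coboundary, since $\operatorname{Aut}$ of $\cL_\lambda$ as a $G$-equivariant line bundle is $(k')^\times$ (again by faithful flatness of $G\otimes_k k'\to X'$). Hence the set of isomorphism classes of descent data is a principal homogeneous space under $H^1(\Gamma,(k')^\times)$.

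\emph{Step 4: part (3).} Let $\mathcal M$ be a $G$-equivariant line bundle on $\cP_{G,x}$. Base changing to $k'$ and using $\cP_{G,x}\otimes_k k'\cong X'$, the induction equivalence between $G\otimes_k k'$-equivariant quasi-coherent sheaves on $X'$ and representations of $P'$ on $k'$-modules (descent along the $P'$-torsor $G\otimes_k k'\to X'$) carries $\mathcal M\otimes_k k'$ to a $P'$-equivariant invertible $k'$-module; when $k'$ is a PID, $\operatorname{Pic}(k')=0$, so this module is free of rank one and the $P'$-action is given by a character $\lambda\in X^*(P')$, i.e.\ $\mathcal M\otimes_k k'\cong\cL_\lambda$. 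The canonical descent datum on $\mathcal M\otimes_k k'$ then exhibits $\mathcal M$ as the Galois descent of $\cL_\lambda$. I expect the main obstacle to be Step~2: correctly identifying the descent datum $(\sigma_X)$, verifying that the candidate lifts $\tilde\sigma$ exist exactly under (i), and carrying out the $2$-cocycle bookkeeping with the $w_\sigma$'s; Steps~3 and~4 are routine once the relevant facts about split reductive group schemes, $G$-equivariant sheaves on $G/P'$, and Picard groups of PIDs are in place.
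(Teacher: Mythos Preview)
Your proposal is correct and follows essentially the same route as the paper: construct the candidate lifts $\tilde\sigma$ (the paper's $\Psi_\sigma$), verify they are well-defined exactly under condition~(i), compute the composition defect to obtain the $2$-cocycle $\beta_\lambda$, and then do the standard $Z^1/H^1$ bookkeeping for parts (1) and (2); your argument for part (3) via the induction equivalence and $\operatorname{Pic}(k')=0$ is likewise the same as Remark~\ref{rem:associatedbundle}. The one substantive variation is how you show that any two equivariant lifts over $\sigma_X$ differ by a scalar: you argue via restriction to the fibre over $eP'$ using the $P'$-torsor $G\otimes_k k'\to X'$, whereas the paper instead shows $\Gamma(X',\cO_{X'})=k'$ using that $X'\to\Spec k'$ is smooth projective with connected geometric fibres (invoking the Noetherian hypothesis and cohomology and base change); your argument is slightly slicker and in fact does not need $k'$ Noetherian, but the two are interchangeable for the theorem as stated.
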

	
	The ``if'' direction of (1) is due to \cite{MR1324207} Proposition 2.4 when $k$ is a field. Let us remark however that we impose our numerical formula of $\beta$ in this paper rather than the corresponding central division algebra by thinking of the Galois cohomology. For the proof of Theorem \ref{thmB}, we see by a minor modification of \cite{MR0209401} Section 3 Corollary 1 that $\cL_\lambda$ is isomorphic to its Galois twists if and only if (i) is satisfied (cf.\ \cite{MR1324207} Proposition 2.2). On this course, we construct ${}^\sigma \cL_\lambda\cong \cL_\lambda$ by using $w_\sigma$ under the condition (i) of (1). We then obtain an obstruction class to the existence of descent data by a similar argument to \cite{MR207712} Section 12. We can compute this class explicitly by our construction of ${}^\sigma \cL_\lambda\cong \cL_\lambda$. The resulting cocycle is $\beta_{\lambda}$ defined in the above.
	
	Notice that the type of $B'$ is defined over $k$. Put $P'=B'$ and take the global sections to obtain a consequence in representation theory:
	
	\begin{corC}[Theorem \ref{thm:descentofrep}]\label{corC}
		Suppose that $k,k'$ are fields of characteristic zero. Let $V'$ be an irreducible representation of $G\otimes_k k'$ with lowest weight $\lambda$. Suppose that ${}^\sigma\lambda=w_\sigma\lambda$ for every $\sigma\in\Gamma$. Then we have $\beta_{V'}=\beta_{\lambda}$. In particular, the following conditions are equivalent:
		\begin{enumerate}
			\renewcommand{\labelenumi}{(\alph{enumi})}
			\item $V'$ admits a $k$-form;
			\item
			\begin{enumerate}
				\item[(b-i)] ${}^\sigma\lambda=w_\sigma\lambda$ for $\sigma\in\Gamma$;.
				\item[(b-ii)] $\beta_{\lambda}$ is trivial as a cohomology class.
			\end{enumerate}
			\item $(G\otimes_k k')\times^{B'} \lambda$ admits a $k$-form.
		\end{enumerate}
		Moreover, if these equivalent conditions are satisfied, a $k$-form of $V'$ is realized as the space of global sections of the $k$-form of $(G\otimes_k k')\times^{B'} \lambda$.
	\end{corC}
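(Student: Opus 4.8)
The plan is to reduce the statement to Theorem B by taking global sections. First I would recall the Borel--Tits construction: given the irreducible representation $V'$ of $G\otimes_k k'$ with lowest weight $\lambda$, the Galois twist ${}^\sigma V'$ is again irreducible with lowest weight ${}^\sigma\lambda$, so $V'\cong{}^\sigma V'$ as $G\otimes_k k'$-representations precisely when ${}^\sigma\lambda$ and $\lambda$ are conjugate under the Weyl group action, i.e. (since both are lowest weights for the same Borel after twisting) precisely when ${}^\sigma\lambda=w_\sigma\lambda$; this is condition (b-i). Granting (b-i), the obstruction class $\beta_{V'}\in H^2(\Gamma,(k')^\times)$ of \cite{MR207712} Section 12 is built from the \emph{scalar} ambiguities in a choice of isomorphisms $\varphi_\sigma\colon{}^\sigma V'\xrightarrow{\sim}V'$; the content of the corollary is that this class coincides with the explicit cocycle $\beta_\lambda$ of Theorem B.

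The key step is to match the two obstruction classes by a compatible choice of intertwiners on both sides. By the classical Borel--Weil theorem over $k'$ we have $G\otimes_k k'$-equivariantly $V'\cong H^0((G\otimes_k k')/B',\cL_\lambda)$, and this identification is functorial for Galois twists, so ${}^\sigma V'\cong H^0(({}^\sigma G\otimes_k k')/{}^\sigma B',{}^\sigma\cL_\lambda)= H^0((G\otimes_k k')/B',{}^\sigma\cL_\lambda)$ using that $G$, $B'$ (its type being defined over $k$) and the base are already defined over $k$. Under condition (b-i), the proof of Theorem B constructs an explicit isomorphism ${}^\sigma\cL_\lambda\cong\cL_\lambda$ using the element $w_\sigma$, whose associated $2$-cocycle obstruction to being a descent datum is exactly $\beta_\lambda(\sigma,\tau)=\lambda(w_{\sigma\tau}^{-1}\sigma(w_\tau)w_\sigma)$. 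Taking $H^0$ of these isomorphisms yields intertwiners $\varphi_\sigma\colon{}^\sigma V'\to V'$, and since $H^0$ is a faithful functor on this line bundle (the space of global sections is one copy of the irreducible, so $\End$ is just scalars), the $2$-cocycle measuring the failure of $(\varphi_\sigma)$ to define a $k$-form of $V'$ is sent isomorphically to the one measuring the failure of the $({}^\sigma\cL_\lambda\cong\cL_\lambda)$ to glue, i.e. to $\beta_\lambda$. By the defining property of $\beta_{V'}$ (independence of the choice of $\varphi_\sigma$ up to coboundary), we get $\beta_{V'}=\beta_\lambda$ in $H^2(\Gamma,(k')^\times)$.

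The equivalence of (a), (b), (c) then follows formally: (a)$\Leftrightarrow$(b) is the Borel--Tits criterion together with $\beta_{V'}=\beta_\lambda$; (b)$\Leftrightarrow$(c) is the case $P'=B'$ of Theorem B(1), whose type is defined over $k$ by the remark preceding the corollary so that $\cP_{G,x}$ and the functor of descent data are available. For the final assertion, if the conditions hold, fix a descent datum on $\cL_\lambda$ giving a line bundle $\cL$ on the $k$-form $\cP_{G,x}=(G/B)$ of the flag scheme; since $\cP_{G,x}$ is proper over $k$ and formation of $H^0$ commutes with the flat base change $k\to k'$, the $k$-module $H^0(\cP_{G,x},\cL)$ satisfies $H^0(\cP_{G,x},\cL)\otimes_k k'\cong H^0((G\otimes_k k')/B',\cL_\lambda)\cong V'$ as $G$-representations, so it is the desired $k$-form of $V'$.

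The main obstacle I expect is the compatibility step in the second paragraph: one must check carefully that the identification $V'\cong H^0((G\otimes_k k')/B',\cL_\lambda)$ intertwines the \emph{Galois-twist} isomorphisms constructed in Theorem B with some admissible choice of $\varphi_\sigma$ in the Borel--Tits setup, and in particular that no spurious scalar is introduced by the identification itself (this is where faithfulness of $H^0$ and the normalization of the Borel--Weil isomorphism matter). Once that bookkeeping is done, everything else is a direct translation of Theorem B through the global-sections functor.
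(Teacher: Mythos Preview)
Your proposal is correct and follows essentially the same approach as the paper: identify $V'$ with $\Gamma((G\otimes_k k')/B',\cL_\lambda)$ via Borel--Weil, take global sections of the isomorphisms $\Psi_\sigma$ from Theorem~B to obtain the intertwiners $\psi_\sigma$, and read off $\beta_{V'}=\beta_\lambda$. The paper's argument is terser but identical in content; your explicit mention of flat base change for the final realization and the Schur-lemma justification that no spurious scalar appears are exactly the bookkeeping the paper leaves implicit.
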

	
	From the abstract perspectives, the implications (a) $\iff$ (b) $\Rightarrow$ (c) were proved in \cite{MR0209401} and \cite{MR1324207} respectively. Let us however remark that our numerical description of the obstruction class $\beta_{\lambda}=\beta_{V'}$ in Theorem \ref{thmB} is new even in the case of $(k,k')=(\bR,\bC)$. We can strengthen it in this setting:
	
	\begin{corD}[Example \ref{ex:C/Rcase}]\label{corD}
		Put $k=\bR$ and $k'=\bC$. Then there exists an element $w\in G(\bC)$ with the following properties:
		\begin{enumerate}
			\renewcommand{\labelenumi}{(\roman{enumi})}
			\item $\bar{B}'=wB'w^{-1}$, where $\bar{B}'$ is the complex conjugation to $B'$;
			\item If a character $\lambda$ of $B'$ satisfies $\bar{\lambda}=w\lambda$ then $\lambda(\bar{w}w)\in\{\pm 1\}$, where $\bar{\lambda}$ is the complex conjugation to $\lambda$.
		\end{enumerate}
		Moreover, a complex irreducible representation of $G\otimes_\bR\bC$ with lowest weight $\lambda$ admits a real form if and only if $\bar{\lambda}=w\lambda$ and $\lambda(\bar{w}w)=1$.
	\end{corD}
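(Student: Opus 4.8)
The plan is to deduce the statement from Theorem~\ref{thm:descentofrep} (Corollary~C) applied to the Galois extension $\bR\subset\bC$, with $\Gamma=\Gal(\bC/\bR)=\{1,\sigma\}$ and $\sigma$ the complex conjugation, after choosing the element $w$ (playing the role of $w_\sigma$) carefully enough that the obstruction cocycle takes values in $\{\pm1\}$. First I would fix a compact real form of $G\otimes_\bR\bC$: an antiholomorphic involution $\sigma_c$ of $G(\bC)$ commuting with $\sigma$ and with compact fixed-point group $G_c\coloneqq G(\bC)^{\sigma_c}$, and put $\theta\coloneqq\sigma\sigma_c=\sigma_c\sigma$ for the associated Cartan involution. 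I would then take the maximal torus $H$ of $G$ to be defined over $\bR$ and $\theta$-stable (such a $\theta$-stable $\bR$-Cartan exists classically); then $H$ is $\sigma_c$-stable, $H\otimes_\bR\bC$ is split, and $T_c\coloneqq H(\bC)^{\sigma_c}$ is a maximal torus of $G_c$. Fixing $\Pi$, hence $B'$, as in the general set-up, I may additionally choose the element $w=w_\sigma$ of the normalizer of $H\otimes_\bR\bC$ in $G(\bC)$ with ${}^\sigma\Pi=w\Pi$ to lie in $N_{G_c}(T_c)$, since $N_{G_c}(T_c)$ surjects onto the Weyl group; as the conclusion of Theorem~\ref{thm:descentofrep} depends on $w_\sigma$ only through the cohomology class of $\beta_\lambda$, this choice is harmless.

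Property~(i) is then automatic: ${}^\sigma\Pi=w\Pi$ forces ${}^\sigma B'=wB'w^{-1}$, i.e.\ $\bar B'=wB'w^{-1}$. For~(ii), I would first observe that $\bar w w\in H(\bC)$: both $w$ and $\bar w$ normalize $H\otimes_\bR\bC$ (since $H$ is defined over $\bR$), and $(\bar w w)B'(\bar w w)^{-1}=\bar w\bar B'\bar w^{-1}=\overline{wB'w^{-1}}=\overline{\bar B'}=B'$, so $\bar w w$ lies in $B'(\bC)$ and normalizes $H\otimes_\bR\bC$, hence lies in $H(\bC)$. Next, since $w\in G_c$ we get $\bar w=\sigma(w)=\sigma\sigma_c(w)=\theta(w)$, which likewise lies in $N_{G_c}(T_c)$ (as $\theta$ commutes with $\sigma_c$ and preserves $H$), whence $\bar w w=\theta(w)\,w$ lies in the compact group $N_{G_c}(T_c)$. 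Combined with $\bar w w\in H(\bC)$ this gives $\bar w w\in H(\bC)\cap G_c=T_c$. Consequently any character $\lambda$ of $B'$ restricts to a continuous character of the compact torus $T_c$, so $\lambda(\bar w w)\in\Uu(1)$; and if moreover $\bar\lambda=w\lambda$, then $\overline{\lambda(\bar w w)}=\bar\lambda(\overline{\bar w w})=\bar\lambda(w\bar w)=(w\lambda)(w\bar w)=\lambda(\bar w w)$, so $\lambda(\bar w w)\in\bR^\times$; therefore $\lambda(\bar w w)\in\Uu(1)\cap\bR^\times=\{\pm1\}$, which is~(ii).

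For the last assertion I would feed this $w$ into Theorem~\ref{thm:descentofrep}. Taking $w_1=1$, the representative cocycle $\beta_\lambda$ is normalized with $\beta_\lambda(\sigma,\sigma)=\lambda(w_1^{-1}\sigma(w)w)=\lambda(\bar w w)$; by the standard computation of the second cohomology of a cyclic group, $\beta_\lambda$ is a coboundary if and only if $\lambda(\bar w w)\in N_{\bC/\bR}(\bC^\times)=\bR_{>0}$, which by~(ii) means $\lambda(\bar w w)=1$. Hence Theorem~\ref{thm:descentofrep} yields that a complex irreducible representation with lowest weight $\lambda$ admits a real form if and only if $\bar\lambda=w\lambda$ and $\lambda(\bar w w)=1$. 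The step I expect to be the crux is the construction of the compatible datum $(\sigma_c,H,w)$ making $\bar w w$ land in the \emph{compact} torus $T_c$: this compactness is the only ingredient not already contained in Theorem~\ref{thm:descentofrep}, and it is precisely what upgrades the a priori bound $\lambda(\bar w w)\in\bR^\times$ (already forced by the $2$-cocycle identity) to $\lambda(\bar w w)\in\{\pm1\}$; the classical existence of a $\theta$-stable maximal torus over $\bR$ and of a conjugation-commuting compact form $\sigma_c$ I would cite rather than reprove.
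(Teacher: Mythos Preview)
Your proof is correct, but the route differs from the paper's. The paper (Example~\ref{ex:C/Rcase}, relying on Example~\ref{ex:perfcase}) chooses $w$ purely algebraically: it lifts the Weyl element via a pinning of the derived group $\sD(G)\otimes_\bR\bC$, so that $\bar w w$ lands in the \emph{finite} center $Z_{\sD(G)}(\bC)$; this immediately forces $\lambda(\bar w w)$ to be a root of unity, and the same reality computation you give then pins it to $\{\pm1\}$. Your argument instead uses the analytic input of a compact real form $\sigma_c$ commuting with $\sigma$, chooses $w$ in the compact normalizer $N_{G_c}(T_c)$, and deduces $\bar w w\in T_c$, whence $|\lambda(\bar w w)|=1$. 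Both approaches feed the same reality identity $\overline{\lambda(\bar w w)}=\lambda(\bar w w)$ to conclude. The paper's choice has the advantage of working uniformly for \emph{any} maximal torus $H$ of $G$ over $\bR$ with no compatibility hypothesis, and stays within the scheme-theoretic framework used elsewhere in the paper; your choice requires selecting $H$ to be $\theta$-stable (or citing that any real maximal torus is stable under some Cartan involution), but is perhaps more transparent to readers coming from real Lie theory. Either way the reduction to Theorem~\ref{thm:descentofrep} and the cyclic-group cohomology computation at the end are identical.
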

	It implies that we can obtain the descent data of complex irreducible representations uniformly (if they exist).
	
	For concrete applications of the above results, we introduce the standard $\bZ\left[1/2\right]$-forms $(G,K,H)$ of classical connected Lie groups, their maximal compact subgroups, and fundamental Cartan subgroups by developing a general theory of symmetric pairs and fundamental Cartan subgroups over the $\bZ\left[1/2\right]$-algebras:
	\begin{thmE}[Lemma \ref{lem:fixedpoint}, Theorem \ref{thm:fundamentalcartan}]
		Let $G$ be a reductive group scheme over a commutative $\bZ\left[1/2\right]$-algebra $k$, and $\theta$ be an involution of $G$. Let $K$ be the fixed point subgroup of $G$ by $\theta$. Then $K$ is represented by a smooth closed affine subgroup scheme. Moreover, if we are given a maximal torus $T$ of $K$, its centralizer $H=Z_G(T)$ is a maximal torus of $G$.
	\end{thmE}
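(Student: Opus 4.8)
The plan is to establish the two assertions in turn, reducing the second one to a statement over algebraically closed fields; if one wishes, one may first reduce to the case of Noetherian $k$ by a standard limit argument.

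For the first assertion, I would start from the observation that $K=G^\theta$ is the equalizer of $\id_G$ and $\theta$, that is, the fibre product $G\times_{G\times_k G}G$ formed along $(\id_G,\theta)$ and the diagonal. Since $G$ is affine, hence separated, over $k$, the diagonal is a closed immersion, so $K\hookrightarrow G$ is a closed immersion; thus $K$ is a closed affine subgroup scheme of $G$, of finite presentation over $k$. The only substantial point is smoothness, which I would deduce from the infinitesimal lifting criterion by an averaging argument using $1/2\in k$: given a square-zero extension $A\twoheadrightarrow A_0$ with kernel $I$ and a point $x_0\in K(A_0)=G(A_0)^\theta$, smoothness of $G$ lets us lift $x_0$ to $x\in G(A)$; then $\theta(x)$ is another lift, so $\theta(x)=x\exp(\xi)$ for a unique $\xi\in I\otimes_k\mathrm{Lie}(G)$ (identifying the torsor of lifts with $I\otimes_k\mathrm{Lie}(G)$ via left translation and using $I^2=0$), and applying $\theta$ yields $d\theta(\xi)=-\xi$; the corrected lift $x'=x\exp(\tfrac12\xi)$ is then $\theta$-fixed, hence lies in $K(A)$. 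Equivalently, $\bZ/2\bZ\cong\mu_2$ over $\bZ\left[1/2\right]$, so $\theta$ is an action of a group of multiplicative type and one may invoke the smoothness of fixed-point schemes of such actions.

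For the second assertion, I would first note that $\theta$ acts trivially on $T$ (as $T\subseteq K$), hence stabilizes $M\coloneqq Z_G(T)$ and restricts to an involution of it, that $M$ is a reductive group scheme over $k$ with connected fibres having $T$ as a central torus, and that $M^\theta=Z_K(T)$ contains $T$ as a maximal torus. Since $M$ is reductive, whether it is a torus may be checked on geometric fibres, and since the formations of $G^\theta$, of $Z_G(T)$ and of maximal tori commute with base change, it suffices to prove the following: over an algebraically closed field (of characteristic $\neq 2$), if $T$ is a maximal torus of $K=G^\theta$ then $M=Z_G(T)$ is a torus. Here I would argue by contradiction. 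If $M$ is not a torus, then $M'\coloneqq[M,M]$ is a nontrivial, $\theta$-stable semisimple group; if $\theta$ is trivial on $M'$ then $M'\subseteq M^\theta$, and otherwise $\theta|_{M'}$ is an involution of a semisimple group in characteristic $\neq 2$, so the identity component of $(M')^\theta$ is reductive of positive dimension. In either case $(M')^\theta$ contains a nontrivial torus $T'$. Now $T\subseteq Z(M)$ commutes with $T'\subseteq M'$, while $T\cap T'\subseteq Z(M')$ is finite; hence $T\cdot T'$ is a torus strictly containing $T$, and it lies in $K$ because $T'\subseteq(M')^\theta\subseteq M^\theta\subseteq K$. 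This contradicts the maximality of $T$ in $K$, so $M$ is a torus. Finally, with $H=Z_G(T)$: from $T\subseteq H$ we get $Z_G(H)\subseteq Z_G(T)=H$, and $H$ is commutative, so $Z_G(H)=H$; since this formation commutes with base change, each geometric fibre of $H$ is its own centralizer, hence a maximal torus, so $H$ is a maximal torus of $G$.

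The hard part will be the geometric-fibre step of the second assertion, and within it the production of the extra torus $T'$: this rests on the structure theory of fixed-point groups of involutions of semisimple groups in characteristic $\neq 2$ — concretely, on the reductivity and positive-dimensionality of the identity component of $(M')^\theta$ — which is precisely where the hypothesis $1/2\in k$ does its work. The descent from geometric fibres back to an arbitrary $\bZ\left[1/2\right]$-algebra $k$, using the standard facts about centralizers of tori and about groups of multiplicative type, should be routine.
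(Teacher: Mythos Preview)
Your argument is correct. For the first assertion your averaging via the infinitesimal lifting criterion is essentially the paper's proof repackaged: the paper works on the coordinate ring, decomposing $A=A^0\oplus A^1$ and projecting a lift $h:A\to R$ onto $A^0$; your $x'=x\exp(\tfrac12\xi)$ is the same projection viewed on points.

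For the second assertion your route genuinely differs from the paper's. The paper proceeds constructively via the dynamic method: over an algebraically closed field it invokes Steinberg's theorem to find a $\theta$-stable Borel $B$ and maximal torus $H_0$, chooses a regular cocharacter $\mu$ with $P_G(\mu)=B$, replaces $\mu$ by $\mu+\theta\mu$ (still regular because $\theta$ preserves the positive roots), observes this lands in $K$ and remains regular there, so $Z_K(\mu)$ is a maximal torus of $K$; conjugating to $T$ then yields $Z_G(T)=gH_0g^{-1}$. You instead argue by contradiction on the derived subgroup $M'=[Z_G(T),Z_G(T)]$, producing a torus $T'\subseteq (M')^\theta$ that enlarges $T$ inside $K$. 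Your approach is shorter and more conceptual, but leans on the reductivity of $(M')^{\theta,\circ}$ (Richardson), whereas the paper's approach is more self-contained modulo Steinberg and fits the dynamic-method framework used elsewhere in the section. Note that you could bypass Richardson entirely: applying Steinberg directly to $M'$ gives a $\theta$-stable maximal torus $S\subseteq M'$, and since $\theta$ preserves a Borel it cannot act as $-1$ on $X_\ast(S)$, so $S^{\theta,\circ}$ already furnishes your $T'$.
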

	We call our forms standard because the involutions $\theta$ we use to define our forms are standard/familiar in the theory of Lie groups. A remarkable point is that $(G\otimes_{\bZ\left[1/2\right]}\bZ\left[1/2,\sqrt{-1}\right],
	H\otimes_{\bZ\left[1/2\right]}\bZ\left[1/2,\sqrt{-1}\right])$ is split. Moreover, there is no arithmetic obstructions to the existence of descent data on the equivariant line bundles $\cL_\lambda$ in the following sense that an analog of Corollary \ref{corD} holds: 
	\begin{thmF}[Proposition \ref{prop:conj=-1}, Section 4, Example \ref{ex:charofparabolics}, Theorem \ref{thmB}]\label{thmF}
		Let $(G,H)$ be a standard form.
		\begin{enumerate}
			\renewcommand{\labelenumi}{(\arabic{enumi})}
			\item There exist a simple system $\Pi$ of \[(G\otimes_{\bZ\left[1/2\right]}\bZ\left[1/2,\sqrt{-1}\right],
			H\otimes_{\bZ\left[1/2\right]}\bZ\left[1/2,\sqrt{-1}\right])\]
			and an element $w$ of the normalizer of \[H\otimes_{\bZ\left[1/2\right]}\bZ\left[1/2,\sqrt{-1}\right]\]
			in $G(\bZ\left[1/2,\sqrt{-1}\right])$ with the following properties:
			\begin{enumerate}
				\item[(i)] $\bar{\Pi}=w\Pi$;
				\item[(ii)] $(\bar{w}w)^2$ is the unit of
				$G(\bZ\left[1/2,\sqrt{-1}\right])$.
			\end{enumerate}
			\item Let $P'$ be the parabolic subgroup of $G\otimes_{\bZ\left[1/2\right]}\bZ\left[1/2,\sqrt{-1}\right]$ attached to a subset $\Pi'\subset \Pi$ satisfying $\bar{\Pi}'=w\Pi'$. Then a character $\lambda$ of $H\otimes_{\bZ\left[1/2\right]}\bZ\left[1/2,\sqrt{-1}\right]$ extends to that of $P'$ if and only if $\langle\alpha^\vee,\lambda\rangle=0$ for all $\alpha\in\Pi'$, where $\alpha^\vee$ is the coroot attached to $\alpha$, and $\langle-,-\rangle$ is the canonical pairing of cocharacters and characters. Moreover, the extension of $\lambda$ to a character of $P'$ is unique if it exists.
			\item For a character $\lambda$ of $P'$, $\cL_\lambda=(G\otimes_{\bZ\left[1/2\right]}\bZ\left[1/2,\sqrt{-1}\right])\times^{P'}\lambda$ (uniquely) admits a descent datum if and only if $\bar{\lambda}=w\lambda$ and $\lambda(\bar{w}w)=1$.
		\end{enumerate}
	\end{thmF}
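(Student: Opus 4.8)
In this proof put $k=\bZ[1/2]$ and $k'=\bZ[1/2,\sqrt{-1}]$; this is a Galois extension with group $\Gamma=\bZ/2\bZ$ generated by complex conjugation $\bar{\ }$ (indeed $k'=k[x]/(x^2+1)$, and the factors $x\pm\sqrt{-1}$ are coprime in $k'[x]$ since their difference $2\sqrt{-1}$ is a unit). Both rings are Noetherian, $\Spec k'$ is connected as $k'$ is a domain, and $H$ gives a split maximal torus of $G\otimes_k k'$ by Theorem E and the remark after it; once $P'\supseteq B'$ is fixed, conditions (i)--(iv) of the Introduction hold, so Theorems A and B apply with $w_{\mathrm{id}}=e$ and $w_{\bar{\ }}=w$, and Theorem F is the analogue of Corollary D for this extension. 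For (1): fix a simple system $\Pi$ of the split pair; since $H$ is defined over $k$, complex conjugation preserves $H\otimes_k k'$, so $\bar\Pi$ is another simple system of the same root datum, and we take for $w$ a lift to the normalizer of $H\otimes_k k'$ in $G(k')$ of the unique Weyl element sending $\Pi$ to $\bar\Pi$ (for instance a product of Chevalley representatives), which gives (i). Applying $\bar{\ }$ to $\bar\Pi=w\Pi$ yields $(\bar ww)\Pi=\Pi$, so $\bar ww$ lies in the stabilizer of $B'$ in $N_G(H)$, hence $\bar ww\in H\otimes_k k'$; thus $(\bar ww)^2=e$ is a relation in a split torus, which I verify from the explicit $\Pi$ and $w$ written down form by form in Section 4 (equivalently, Proposition \ref{prop:conj=-1}). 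In particular $\lambda(\bar ww)\in\mu_2(k')=\{\pm1\}$ for every character $\lambda$ of $H\otimes_k k'$.

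Part (2) is the standard description of characters of a parabolic above a fixed Borel, recorded in Example \ref{ex:charofparabolics}: any character of $P'$ kills the unipotent radical and the derived subgroup of the Levi $L'\supseteq H$, hence the image of $\alpha^\vee$ for every $\alpha\in\Pi'$, so an extension of $\lambda$ forces $\langle\alpha^\vee,\lambda\rangle=0$ for all $\alpha\in\Pi'$; conversely this condition makes $\lambda$ factor through $L'/[L',L']$ and then extend trivially over the unipotent radical of $P'$. The extension is unique because $\lambda$ is already pinned down on $B'$ and on the root groups $U_{-\alpha}$ $(\alpha\in\Pi')$, which generate $P'$ together with $B'$. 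Here the standing hypothesis $\bar\Pi'=w\Pi'$ ensures, via Theorem A, that the type of $P'$ is defined over $k$, so that $\cP_{G,x}$ and $\cL_\lambda$ make sense.

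For (3) I apply Theorem B(1): $\cL_\lambda$ admits a descent datum iff $\bar\lambda=w\lambda$ and the class of the normalized $2$-cocycle $\beta_\lambda$, which here is determined by $\beta_\lambda(\bar{\ },\bar{\ })=\lambda(\bar ww)$, vanishes in $H^2(\Gamma,(k')^\times)$. Since $(k')^\times=\mu_4\times\langle1+\sqrt{-1}\rangle$ with $(1+\sqrt{-1})(1-\sqrt{-1})=2$ and $(1+\sqrt{-1})/(1-\sqrt{-1})=\sqrt{-1}$, a direct computation shows that the norm subgroup is $\langle2\rangle$, so $H^2(\Gamma,(k')^\times)=k^\times/\langle2\rangle\cong\{\pm1\}$ and $H^1(\Gamma,(k')^\times)=0$. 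As $\lambda(\bar ww)\in\{\pm1\}$ by (1), $\beta_\lambda$ is trivial exactly when $\lambda(\bar ww)=1$; this gives the stated criterion, and $H^1(\Gamma,(k')^\times)=0$ together with Theorem B(2) shows the descent datum is unique when it exists.

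The main obstacle is Part (1): simultaneously producing, for each classical standard form $(G,H)$, a simple system $\Pi$ and an element $w$ of the normalizer realizing both $\bar\Pi=w\Pi$ and $(\bar ww)^2=e$. This is where the explicit shape of the standard involutions $\theta$ enters and requires the type-by-type analysis of Section 4; granting that input, Parts (2) and (3) are formal consequences of Theorems A and B and the elementary Galois-cohomology computation above.
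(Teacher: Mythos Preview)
Your proposal is correct and follows essentially the same route as the paper: Part~(1) is deferred to the case-by-case verification in Section~4, Part~(2) is Example~\ref{ex:charofparabolics}, and Part~(3) is deduced from Theorem~B together with the vanishing of $H^1(\Gamma,(k')^\times)$. Two minor remarks: your parenthetical ``(equivalently, Proposition~\ref{prop:conj=-1})'' is imprecise, since that proposition only gives $\bar w w\in H(k')$, not $(\bar w w)^2=e$---the latter genuinely needs the explicit representatives of Section~4; and for Part~(3) the paper phrases the obstruction via Corollary~\ref{cor:quadcase} (i.e.\ whether $\lambda(\bar w w)$ is a norm, with $-1\notin N((k')^\times)$ since norms are sums of two squares in $\bZ[1/2]$) and obtains uniqueness from the general PID argument of Example~\ref{ex:vanishing}, whereas you compute $H^2(\Gamma,(k')^\times)\cong\{\pm1\}$ and $H^1(\Gamma,(k')^\times)=0$ directly---these are equivalent formulations of the same computation.
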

	We prove it by finding $w$ for each form $(G,H)$ directly. The following statement is a summary of the above results:
	\begin{corG}\label{corG}
		Let $(G,H)$ be a standard form. Let $\Pi$ and $w$ be as in Theorem \ref{thmF} (1).
		\begin{enumerate}
			\renewcommand{\labelenumi}{(\arabic{enumi})}
			\item The parabolic types of $G$ over $\bZ\left[1/2\right]$ are in one-to-one correspondence with subsets $\Pi'\subset \Pi$ satisfying $\bar{\Pi}'=w\Pi'$.
			\item Let $P'$ and $\Pi'$ be as in Theorem \ref{thmF} (2). Write $\cP_{G,x}$ for the $\bZ\left[1/2\right]$-form of $(G\otimes_{\bZ\left[1/2\right]}
			\bZ\left[1/2,\sqrt{-1}\right])/P'$. Then the set of isomorphism classes of equivariant line bundles on $\cP_{G,x}$ is bijective to that of characters of $H\otimes_{\bZ\left[1/2\right]}\bZ\left[1/2,\sqrt{-1}\right]$ satisfying $\langle \alpha^\vee,\lambda\rangle=0$ for $\alpha\in\Pi'$, $\bar{\lambda}=w\lambda$, and $\lambda(\bar{w}w)=1$.
		\end{enumerate}
	\end{corG}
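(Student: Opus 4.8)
The plan is to assemble Corollary~\ref{corG} from Theorems~\ref{thmA}, \ref{thmB}, and \ref{thmF} by specializing the general framework to $k=\bZ\left[1/2\right]$, $k'=\bZ\left[1/2,\sqrt{-1}\right]$, and $\Gamma=\bZ/2\bZ$, whose nontrivial element acts as complex conjugation. First I would record that the standing hypotheses hold in this situation: both rings are Noetherian, $\Spec\bZ\left[1/2,\sqrt{-1}\right]$ is connected since the ring is a domain, the pair $(G\otimes_{\bZ\left[1/2\right]}\bZ\left[1/2,\sqrt{-1}\right],H\otimes_{\bZ\left[1/2\right]}\bZ\left[1/2,\sqrt{-1}\right])$ is split by construction of the standard forms, and $B'$ is the Borel subgroup attached to the simple system $\Pi$ of Theorem~\ref{thmF}~(1). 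Moreover, the element $w_\sigma$ demanded by the general theory may be taken to be the element $w$ of Theorem~\ref{thmF}~(1), for which $\bar\Pi=w\Pi$.

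For part~(1): by Theorem~\ref{thmA}, the parabolic type of a parabolic subgroup $P'\supset B'$ of $G\otimes_{\bZ\left[1/2\right]}\bZ\left[1/2,\sqrt{-1}\right]$ is defined over $\bZ\left[1/2\right]$ if and only if $\bar\Pi'=w\Pi'$, and every parabolic type of $G$ over $\bZ\left[1/2\right]$ is obtained from such a $P'$. Since the pair above is split, the parabolic subgroups containing $B'$ are exactly the standard parabolic subgroups indexed by the subsets $\Pi'\subset\Pi$, and distinct subsets give non-conjugate parabolic subgroups over $\bZ\left[1/2,\sqrt{-1}\right]$, hence distinct parabolic types over $\bZ\left[1/2\right]$ because $\type G$ commutes with base change and $\Spec\bZ\left[1/2,\sqrt{-1}\right]\to\Spec\bZ\left[1/2\right]$ is an epimorphism. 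Combining this injectivity with the surjectivity from Theorem~\ref{thmA} yields the claimed bijection in~(1).

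For part~(2): fix $P'$ and $\Pi'$ as in Theorem~\ref{thmF}~(2). The ring $\bZ\left[1/2,\sqrt{-1}\right]=\bZ[\sqrt{-1}]\left[1/2\right]$ is a localization of the ring of Gaussian integers, hence a PID, so Theorem~\ref{thmB}~(3) shows that every equivariant line bundle on $\cP_{G,x}$ is the Galois descent of some $\cL_\lambda$ with $\lambda$ a character of $P'$. By Theorem~\ref{thmF}~(2) the characters of $P'$ correspond bijectively to the characters $\lambda$ of $H\otimes_{\bZ\left[1/2\right]}\bZ\left[1/2,\sqrt{-1}\right]$ with $\langle\alpha^\vee,\lambda\rangle=0$ for all $\alpha\in\Pi'$, and by Theorem~\ref{thmF}~(3) such an $\cL_\lambda$ admits a descent datum, necessarily unique, exactly when $\bar\lambda=w\lambda$ and $\lambda(\bar{w}w)=1$. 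Composing these correspondences produces a map from the set of characters $\lambda$ satisfying the three stated conditions to the set of isomorphism classes of equivariant line bundles on $\cP_{G,x}$; it is well defined precisely because the descent datum is unique, it is surjective by Theorem~\ref{thmB}~(3), and it is injective because $\cL_{\lambda_1}\not\cong\cL_{\lambda_2}$ over $\bZ\left[1/2,\sqrt{-1}\right]$ when $\lambda_1\neq\lambda_2$ (equivariant line bundles on the flag scheme being classified by characters of $P'$), while the base change of the descended bundle along $\bZ\left[1/2\right]\to\bZ\left[1/2,\sqrt{-1}\right]$ recovers $\cL_\lambda$. This gives~(2).

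The argument is essentially bookkeeping once Theorems~\ref{thmA}, \ref{thmB}, and \ref{thmF} are in hand; the one point I would flag as the main obstacle is the well-definedness of the correspondence in~(2), i.e.\ ensuring that no isomorphism class of equivariant line bundle is counted twice. This reduces to the uniqueness of the descent datum on $\cL_\lambda$, which — consistently with Theorem~\ref{thmB}~(2) — follows from the vanishing of $H^1(\bZ/2\bZ,(\bZ\left[1/2,\sqrt{-1}\right])^\times)$; this one verifies directly using $(\bZ\left[1/2,\sqrt{-1}\right])^\times\cong\bZ/4\bZ\times\bZ$ with complex conjugation inverting the torsion and fixing the free part modulo torsion. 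Everything else is a direct citation.
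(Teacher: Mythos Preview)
Your assembly of Corollary~\ref{corG} from Theorems~A, B, and~F is correct and is exactly what the paper intends: the corollary is presented there as a ``summary of the above results'' with no separate proof, and your bookkeeping fills in precisely the expected verifications (the hypotheses on $k,k'$, the PID property of $\bZ[1/2,\sqrt{-1}]$, and the injectivity/surjectivity checks).

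One caveat on your closing aside: the description of the Galois module $(\bZ[1/2,\sqrt{-1}])^\times\cong\bZ/4\bZ\times\bZ$ as ``conjugation inverts the torsion and fixes the free part modulo torsion'' is not by itself enough to force $H^1=0$ --- a genuinely diagonal action with those features would give $H^1\cong\bZ/2\bZ$. What actually makes $H^1$ vanish is the off-diagonal twist $\overline{1+i}=i^{3}(1+i)$, which contributes an odd power of $i$ to the coboundaries. The paper sidesteps this computation via the Hilbert~90 argument of Example~\ref{ex:vanishing} (valid because $\bZ[1/2]$ is a PID), and in any case your main line already obtains uniqueness directly from Theorem~\ref{thmF}~(3), so this imprecision does not affect the proof.
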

	We also determine the half-integral parabolic types of $G$, and determine which $\lambda$ satisfies $\langle \alpha^\vee,\lambda\rangle=0$, $\alpha\in\Pi'$, $\bar{\lambda}=w\lambda$, and $\lambda(\bar{w}w)=1$ for our standard models $G$ to deduce the complete and explicit classification of equivariant line bundles on partial flag schemes of $G$ over $\bZ\left[1/2\right]$. The author believes that this is a paradigm to the descent problems for other forms of classical connected Lie groups.
	
	We remark that there are more choices of maximal tori in representation theory of real reductive Lie groups. We can start with any of them for the complete classification in the standard setting by our general formalism Theorem \ref{thmB}. The difficulty of the problem depends on the choice of a maximal torus. For example, suppose that $G$ is split. The classification problem is obvious if we choose the split maximal torus. However, the problem is difficult if we start with the fundamental Cartan sugbroups as we see in this paper. Despite this issue, we still stick to using fundamental Cartan subgroups at least for the concrete application to the standard forms because we are aiming to obtain $\bZ\left[1/2\right]$-forms of $A_{\fq}(\lambda)$-modules by localization in \cite{hayashijanuszewski} (Section 1.2). Recall that in the complex setting, $A_{\fq}(\lambda)$-modules correspond to closed $K$-orbits on partial flag varieties attached to $\theta$-stable parabolic subgroups. For this reason, rings of definition of closed $K$-orbits on partial flag schemes attached to models of $\theta$-stable parabolic subgroups are discussed in \cite{hayashijanuszewski}. We stick to fundamental Cartan subgroups since they suit this context most.
	
	\subsection{Organization of this paper}
	In Section 2, we develop computational techniques to describe the Dynkin scheme and to construct equivariant line bundles on partial flag schemes by Galois descent. The uniqueness problem of descent data is also discussed. We give its application to the rationality problem of representations of connected reductive algebraic groups over fields of characteristic zero at the end of Section 2.
	
	In Section 3, we construct the standard $\bZ\left[1/2\right]$-forms of classical connected Lie groups, their maximal compact subgroups, and fundamental Cartan subgroups. For this, we confirm the notions of symmetric and fundamental Cartan subgroups over commutative $\bZ\left[1/2\right]$-algebras.
	
	In Section 4, we complete our program of Section 2 for our standard $\bZ\left[1/2\right]$-forms of classical connected Lie groups in Section 3. Namely, we compute the Dynkin scheme, and classify equivariant line bundles on partial flag schemes over $\bZ\left[1/2\right]$ for these groups.
	\subsection{Notation}
	Let $\bN$ denote the set of nonnegative integers.
	
	For a prime number $p$, let $\mathbb{Z}_p$ (resp.~$\mathbb{Q}_p$) denote the ring of $p$-adic integers (resp.~the field of $p$-adic numbers).
	
	For a given group, we will denote its unit by $e$.
	
	For a commutative ring $k$, let $k^\times$ denote the group of units of $k$.
	
	For an object $X$ of a category, we denote the identity morphism of $X$ by $\id_X$.
	
	For a quasi-coherent sheaf $\cF$ on a scheme $X$, let $\Gamma(X,\cF)$ denote the module of global sections of $\cF$.
	
	Let $\Gamma$ be a group. For a $\Gamma$-set $S$, write $S^\Gamma=\{s\in S:~\sigma (s)=s\mathrm{\ for\ all\ }
	\sigma\in\Gamma\}$. For $\sigma\in\Gamma$ and sets $S_\tau$ indexed by $\tau\in\Gamma$, write $s_\sigma$ for
	\[\prod_{\tau\in\Gamma} S_\tau\to \prod_{\tau\in\Gamma} S_{\sigma\tau};~(x_\tau)\mapsto (x_{\sigma\tau}).\]
	We also define $s_\sigma:\prod_{\tau\in\Gamma} X_\tau\to \prod_{\tau\in\Gamma} X_{\sigma\tau}$ for copresheaves $X_\tau$ ($\tau\in\Gamma$) on a fixed category in a similar way.
	
	In this section, fix a commutative ring $k$. Write $\Set$ (resp.\ $\Grp$) for the category of sets (resp.\ groups). Let $\CAlg_k$ be the category of commutative $k$-algebras, and $\Set^{\CAlg_k}$ (resp.\ $\Grp^{\CAlg_k}$) be the category of copresheaves on $\CAlg_k$ valued in $\Set$ (resp.\ $\Grp$). The objects of $\Set^{\CAlg_k}$ (resp.\ $\Grp^{\CAlg_k}$) are called $k$-spaces (resp.\ group $k$-spaces). The category of $k$-schemes (resp.\ group $k$-schemes) will be regarded as a full subcategory of $\Set^{\CAlg_k}$ (resp.\ $\Grp^{\CAlg_k}$) via the restricted co-Yoneda functor.
	
	Suppose that we are given a homomorphism $i:k\to k'$ of commutative rings. Then the precomposition with $i$ and the base change define an adjunction
	\[-\otimes_k k':\CAlg_k\rightleftarrows \CAlg_{k'}.\]
	They induce adjunctions
	\[-\otimes_k k':
	\Set^{\CAlg_k}\rightleftarrows\Set^{\CAlg_{k'}}:\Res_{k'/k}\]
	\[-\otimes_k k':
	\Grp^{\CAlg_k}\rightleftarrows\Grp^{\CAlg_{k'}}:\Res_{k'/k}\]
	by the precomposition. We remark that the above base change functors $-\otimes_k k'$ are compatible with those from $k$-schemes and\ group $k$-schemes to $k'$-schemes and group $k'$-schemes respectively. The right adjoint functor $\Res_{k'/k}$ is called the Weil restriction. The adjoint functor theorem (see \cite{MR1294136} 1.66) implies that $\Res_{k'/k}$ restricts to the adjunction of affine schemes if $k'$ is finitely generated and projective as a $k$-module. Hence under this condition, for an affine $k'$-group scheme $G'$, $\Res_{k'/k} G'$ is represented by an affine group $k$-scheme. In this paper, we sometimes omit $\otimes_k k'$ to write $G$ for $G\otimes_k k'$ if there is no risk of confusion on the base in order to save space. Following this convention, we will denote the multiplicative (resp.\ additive) group scheme over $k$ by $\bG_m$ (resp.\ $\bG_a$).
	
	For an automorphism $\sigma$ of $k$, write ${}^\sigma(-)$ for the base change functors of $\CAlg_k$, $\Set^{\CAlg_k}$, and $\Grp^{\CAlg_k}$ by $\sigma$. We will identify ${}^\sigma(-):\CAlg_k\to\CAlg_k$ with the functor defined by the precomposition of the structure homomorphism with $\sigma^{-1}$.
	
	Let $G$ be a group scheme over $k$. Let $X^\ast(G)$ be the character group of $G$. For a subgroup scheme $H\subset G$, $\lambda\in X^\ast(H)$, and $g\in G(k)$, define $g\lambda\in X^\ast(gHg^{-1})$ by $(g\lambda)(t) = \lambda(g^{-1}tg)$. Dually, for a cocharacter $\mu$ of $G$ and $g\in G(k)$, define a new cocharacter $g\mu$ of $G$ by $(g\mu)(a)=g\mu(a)g^{-1}$. For a subgroup scheme $H$, let $N_G(H)$ (resp.~$Z_G(H)$) be the normalizer (resp.~centralizer) subgroup if it is represented by a group $k$-scheme (see \cite{MR3362641} Proposition 2.1.6 and Lemma 2.2.4). Write $Z_G$ for the center $Z_G(G)$ of $G$.
	
	Let $H$ be a commutative group scheme over $k$. We will denote the cocharacter group of $H$ by $X_\ast(H)$. If $\Spec k$ is connected, the canonical pairing $X_\ast(H)\otimes_\bZ X^\ast(H) \to X^\ast(\bG_m)\cong \bZ$ will be referred to as $\langle-,-\rangle$ (see \cite{MR0212024} Corollaire 1.6 for the isomorphism $X^\ast(\bG_m)\cong\bZ$).
	
	Let $G$ be a reductive group scheme over $k$ in the sense of \cite{MR0228502}. Let $\sD(G)$ denote the derived subgroup of $G$ (\cite{MR0218363} Section 6.1, 6.2). Let $\Dyn G$ and $\type G$ be the Dynkin scheme and the scheme of parabolic types of $G$ respectively (\cite{MR0228503}
	Section 3, \cite{MR0218364} D\'efinition 3.4). Let $\cP_G$ denote the moduli scheme of parabolic subgroups of $G$, and $t:\cP_G\to\type G$ be the canonical morphism (\cite{MR0218364} Section 3.2). For a parabolic type $x$ of $G$, let $\cP_{G,x}$ denote the fiber of $t$ at $x$. For a maximal torus $H$ of $G$, let $W(G,H)$ be the Weyl group scheme $N_G(H)/H$ (\cite{MR3362641} Proposition 3.2.8). Let us also recall the notations on the dynamic method (\cite{MR3362817} Section 2.1). For a cocharacter $\mu$ of $G$, define group $k$-spaces $P_G(\mu)$ and $Z_G(\mu)$ by
	\[P_G(\mu)(A)=\{g\in G(A):\ \lim_{a\to 0}\mu(a)g\mu(a)^{-1}\ {\rm exists}\}\]
	\[Z_G(\mu)(A)=\{g\in G(A):\ \mu(a)g\mu(a)^{-1}=g\ 
	{\rm for\ all\ }A'\in\CAlg_A{\rm \ and\ }a\in A'\}.\]
	Then $P_G(\mu)$ is a parabolic subgroup of $G$ by \cite{MR3362641} Theorem 4.1.7 4 and \cite{MR3362817} Proposition 2.2.9. The group $k$-space $Z_G(\mu)$ is a reductive subgroup scheme of $G$ (\cite{MR3362817} Lemma 2.1.5, Proposition 2.1.8 (3), (4), \cite{MR3362641} Example 4.1.9).
	
	Suppose that $k$ is nonzero. Let $(G,H)$ be a split reductive group scheme over $k$ (see \cite{MR0218363} D\'efinition 1.13). Write $\Delta(G,H)$ for the set of roots with respect to $H$. For $\alpha\in\Delta(G,H)$, let $\alpha^\vee$ denote the coroot of $\alpha$. Suppose that $\Spec k$ is connected. Then $W(G,H)(k)$ is canonically isomorphic to the Weyl group attached to the root datum of $G$ (\cite{MR3362641} Corollary 5.1.11).
	
	Let $n$ be a nonnegative integer. For square matrices $A_1,A_2,\ldots,A_n$, let us denote the blockwise diagonal matrix
	\[\left(\begin{array}{cccc}
		A_1 &  &  &  \\
		& A_2 &  &  \\
		&  & \ddots &  \\
		&  &  & A_n
	\end{array}
	\right)\]
	by $\diag(A_1,A_2,\ldots,A_n)$. Let $I_n$ be the unit matrix of size $n$. Set
	\[J_n=\left(\begin{array}{cc}
		0 & I_n \\
		-I_n&0 
	\end{array}
	\right)\]
	\[S_2=\left(\begin{array}{cc}
		0 & 1  \\
		1&0 
	\end{array}
	\right)\]
	\[S_{2n+1}=\diag(\overbrace{S_2,S_2,\ldots,S_2}^{n},1)\]
	\[S'_{2n+1}=\diag(1,\overbrace{S_2,S_2,\ldots,S_2}^{n})\]
	\[S_{2n}=\diag(\overbrace{S_2,S_2,\ldots,S_2}^{n})\]
	\[S'_{2n}=\left(\begin{array}{cc}
		0 & I_n  \\
		I_n&0 
	\end{array}
	\right)\]
	\[w_2=\diag(1,-1).\]
	Define an $n\times n$ square matrix $K_n$ by
	\[K_n=\left(\begin{array}{ccc}
		&  & 1 \\
		& \reflectbox{$\ddots$} &  \\
		1 &  & 
	\end{array}\right).\] 
	For nonnegative integers $p,q$, set
	\[I_{p,q}=\diag(I_p,-I_q)\]
	\[I_{p,q,p,q}=\diag(I_{p,q},I_{p,q}).\]
	For a matrix $A$, write $A^T$ for its transpose.
	
	Let $n$ be a positive integer. We define affine group schemes $\GL_n$, $\SL_n$, $\Oo(n)$, $\SO(n)$, $\SO_{2n-1}$, $\Sp_n$, $\SO_{2n}$, $\SO_{2n}'$ over $\bZ$\footnote{In the most part of this paper, we will regard them as group schemes over $\bZ\left[1/2\right]$ or $\bZ\left[1/2,\sqrt{-1}\right]$ by the base change.} as group $\bZ$-spaces by
	\[\GL_n(A)=\{{\rm invertible\ }n\times n\ {\rm matrices\ whose\ entries\ belong\ to\ }A\}\]
	\[\SL_n(A)=\{g\in\GL_n(A):\ \det g=1\}\]
	\[\Oo(n,A)=\{g\in\GL_n(A):\ g^T=g^{-1}\}\]
	\[\SO(n,A)=\{g\in\SL_n(A):\ g^T=g^{-1}\}\]
	\[\SO_{2n-1}(A)=\{g\in\SL_{2n-1}(A):\ g^TS_{2n-1}g=S_{2n-1}\}\]
	\[\SO'_{2n-1}(A)=\{g\in\SL_{2n-1}(A):\ g^TS'_{2n-1}g=S'_{2n-1}\}\]
	\[\Sp_n(A)=\{g\in\GL_{2n}(A):\ g^TJ_ng=J_n\}\]
	\[\SO_{2n}(A)=\{g\in\SL_{2n}(A):\ g^TS_{2n}g=S_{2n}\}\]
	\[\SO_{2n}'(A)=\{g\in\SL_{2n}(A):\ g^TS_{2n}'g=S_{2n}'\}.\]
	We remark that $\Sp_n$ is a subgroup scheme of $\SL_{2n}$ by \cite{MR780184} (22) and Theorem 6.4.
	
	For a $\bZ\left[1/2\right]$-algebra $A$, we denote an element
	\[(a_{ij}\otimes 1+b_{ij}\otimes\sqrt{-1})\in(\Res_{\bZ\left[1/2,\sqrt{-1}\right]/\bZ\left[1/2\right]}\GL_n)(A)\]
	by $a\otimes 1+b\otimes\sqrt{-1}$, where $a=(a_{ij})$ and $b=(b_{ij})$ are matrices whose entries belong to $A$. Define an anti-involution $(-)^\ast$ on $\Res_{\bZ\left[1/2,\sqrt{-1}\right]/\bZ\left[1/2\right]}\GL_n$ by
	\[g=a\otimes 1+b\otimes\sqrt{-1}\mapsto a^T\otimes 1-b^T\otimes\sqrt{-1}=g^\ast.\]
	Set
	\[g_2=\left(\begin{array}{cc}
		1&-\sqrt{-1}\\
		-\sqrt{-1}&1\\
	\end{array}\right)\in\GL_2(\bZ\left[1/2,\sqrt{-1}\right])\]
	\[g_{2n-1}=\diag(g_2,g_2,\ldots,g_2,1)\in\GL_{2n-1}(\bZ\left[1/2,\sqrt{-1}\right])\]
	\[g_{2n}=\diag(g_2,g_2,\ldots,g_2)\in\GL_{2n}(\bZ\left[1/2,\sqrt{-1}\right])\]
	\[g'_{2n}=\left(\begin{array}{cc}
		I_n & \sqrt{-1}I_n \\
		\frac{1}{2}I_n & -\frac{\sqrt{-1}}{2}I_n
	\end{array}
	\right)\in\GL_{2n}(\bZ\left[1/2,\sqrt{-1}\right]).\]
	
	Suppose that we are given a finite group $\Gamma$ and $\Gamma$-modules $M_\tau$ indexed by $\tau\in\Gamma$. For $\sigma\in\Gamma$ and $m\in M_\sigma$, we denote the image of $m$ under the canonical map $M_\sigma\to\oplus_{\tau\in\Gamma} M_\tau=\prod_{\tau\in\Gamma} M_\tau$ by $m\tau$.
	
	\section{Descent theory for partial flag schemes}
	This section is devoted to a general theory to classify partial flag schemes and equivariant line bundles on them by Galois descent and its application to representation theory.
	\subsection{Compute the Dynkin scheme}
	Recall that for a reductive group scheme $G$ over a commutative ring $k$, $\Dyn G$ is the constant $k$-scheme attached to the simple system of $G$ if $G$ is a split reductive group scheme with a pinning; For general $G$, we glue it \'etale locally (\cite{MR0228503} Section 3). Let $\type G$ be the moduli scheme of open and closed subschemes of $\Dyn G$ (see \cite{MR0218364} Section 3). Since the partial flag schemes are defined as the fibers of the projection morphism $t:\cP_G\to\type(G)$, we can classify partial flag schemes from $\Dyn(G)$. For this reason, we discuss how to compute $\Dyn(G)$ in this section.
	
	For practical computations of $\Dyn(G)$, let us simplify the situation. Let $k \to k'$ be a Galois extension of Galois group $\Gamma$ such that $\Spec k'$ is connected. Suppose that there is a maximal torus $H$ of $G$ such that $(G\otimes_k k',H\otimes_k k')$ is split. Fix a simple system $\Pi$ of $(G\otimes_k k',H\otimes_k k')$. For each $\sigma\in\Gamma$, set ${}^\sigma\Pi = \{{}^\sigma\alpha\in \Delta(G\otimes_k k',H\otimes_k k'):~\alpha\in\Pi\}$. Then there exists a unique element $w_\sigma\in W(G,H)(k')$ such that $w_\sigma(\Pi) = {}^\sigma\Pi$. We remark that in latter sections, we will fix a representative of $w_\sigma$ in $N_{G(k')}(H\otimes_k k')$ which will be denoted by the same symbol $w_\sigma$. See \cite{MR3362641} Corollary 5.1.11 for the existence of $w_\sigma\in N_{G(k')}(H\otimes_k k')$. It follows by definition of $w_\sigma$ that these elements satisfy the cocycle condition $w_{\sigma\tau}=\sigma(w_\tau)w_\sigma$ in $W(G,H)(k')$ for $\sigma,\tau\in\Gamma$. In particular, we get $w_\sigma^{-1}=\sigma(w_{\sigma^{-1}})$.
	
	\begin{defn}[cf.~\cite{MR106967}]
		The map $\Gamma\times X^\ast(H\otimes_k k')\to X^\ast(H\otimes_k k');~(\sigma,\lambda)\mapsto {}^{\sigma}(w_{\sigma^{-1}}\lambda)$ determines a left action of $\Gamma$ on $X^\ast(H\otimes_k k')$. We call it the $\ast$-action. When $\Gamma=\bZ/2\bZ$, we call the action by the unique nontrivial element of $\Gamma$ the $\ast$-involution.
	\end{defn}
	
	It is clear that the $\ast$-action restricts to an action on $\Pi$. Unwinding the definitions, we obtain the following description:
	
	\begin{thm}\label{thm:dynkin}
		\begin{enumerate}
			\renewcommand{\labelenumi}{(\arabic{enumi})}
			\item The Galois group acts on the coordinate ring $\prod_{\alpha\in\Pi} k'$ of $\Dyn G\otimes_k k'$ by
			\[\sigma\cdot (c_\alpha)_{\alpha\in\Pi}
			=(\sigma
			(c_{{}^{\sigma^{-1}}(w_{\sigma}\alpha)}))_{\alpha\in\Pi}.\]
			Moreover, $\Dyn G$ is an affine $k$-scheme whose coordinate ring is $(\prod_{\alpha\in\Pi} k')^\Gamma$.
			\item Regard $\Pi$ as a $\Gamma$-set for the $\ast$-action. Then $(\type G)(k)$ is bijective to the power set of $\Gamma\backslash\Pi$.
		\end{enumerate}
	\end{thm}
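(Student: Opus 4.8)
The plan is to reduce both assertions to ordinary Galois descent along $\Spec k'\to\Spec k$, which is a finite \'etale $\Gamma$-cover (finite faithful flatness together with $k'\otimes_k k'\cong\prod_{\sigma\in\Gamma}k'$ forces \'etaleness), exploiting that $\Dyn$ and $\type$ commute with base change and are, in the split case, explicit constant schemes.

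First I would recall from \cite{MR0228503} Section 3 and \cite{MR0218364} Section 3 that $\Dyn$ and $\type$ are stable under arbitrary base change, so $\Dyn G\otimes_k k'\cong\Dyn(G\otimes_k k')$ and $\type G\otimes_k k'\cong\type(G\otimes_k k')$, and that for the split reductive group scheme $(G\otimes_k k',H\otimes_k k')$ equipped with a pinning extending $\Pi$ the scheme $\Dyn(G\otimes_k k')$ is the constant $k'$-scheme on the finite set $\Pi$, whose coordinate ring is $\prod_{\alpha\in\Pi}k'$, while $\type(G\otimes_k k')$ is the constant $k'$-scheme on the power set $2^\Pi$. Since $\Spec k'$ is connected, their $k'$-points are just $\Pi$ and $2^\Pi$. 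Galois descent then yields that $\Dyn G$ is an affine $k$-scheme (affineness of a morphism is fppf-local on the base, and $\prod_{\alpha\in\Pi}k'$ is finite over $k'$) with coordinate ring the $\Gamma$-invariants of $\prod_{\alpha\in\Pi}k'$ for the descent action, and that $(\type G)(k)=(2^\Pi)^\Gamma$ for the induced action; so it remains to identify these actions.

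The heart of the matter is carrying out this identification. The canonical descent datum on $G\otimes_k k'$ furnishes, for each $\sigma\in\Gamma$, a $\sigma$-semilinear automorphism of $G\otimes_k k'$, and applying $\Dyn$ and $\type$ produces the descent actions in question. The structural input I would invoke is that $\Dyn$ is insensitive to inner automorphisms (the map $\mathrm{Aut}(G\otimes_k k')\to\mathrm{Aut}(\Dyn(G\otimes_k k'))$ kills the inner ones). The semilinear twist by $\sigma$ sends the pinned datum $(H\otimes_k k',\Pi)$ to $(H\otimes_k k',{}^\sigma\Pi)$ because $H$ is defined over $k$; conjugation by a representative $w_\sigma\in N_{G(k')}(H\otimes_k k')$ then carries $(H\otimes_k k',{}^\sigma\Pi)$ back to $(H\otimes_k k',\Pi)$ and is inner. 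Composing these and using $w_\sigma^{-1}=\sigma(w_{\sigma^{-1}})$ shows that the descent action on the index set $\Pi$ is the generalized Satake action; transporting this to the coordinate ring $\prod_{\alpha\in\Pi}k'$ (contravariant in the index set, with scalars twisted by $\sigma$) gives the displayed formula of (1), and the functoriality of $\type$ over $\Dyn$ shows the induced action on $2^\Pi$ is the corresponding power-set action. This proves (1) and reduces (2) to computing $(2^\Pi)^\Gamma$.

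For (2) it then suffices to note that a subset of $\Pi$ is $\Gamma$-stable for the generalized Satake action precisely when it is a union of $\Gamma$-orbits, so $S\mapsto$ (image of $S$ in $\Gamma\backslash\Pi$) is a bijection from $(2^\Pi)^\Gamma$ onto the power set of $\Gamma\backslash\Pi$, with inverse the formation of preimages. The base-change compatibilities and this final combinatorics are routine; the delicate step, on which I would spend the most care, is the third one — pinning down SGA3's construction of $\Dyn$ well enough to justify its insensitivity to inner automorphisms and to compose the change-of-pinning bookkeeping with the Galois twist so that $w_\sigma$ and the exponents $\sigma,\sigma^{-1}$ land in the right places.
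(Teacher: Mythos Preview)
Your proposal is correct and is precisely the ``unwinding the definitions'' that the paper invokes in lieu of a proof: base-change compatibility of $\Dyn$ and $\type$, identification with constant schemes in the split case, Galois descent, and the computation of the induced $\Gamma$-action via the insensitivity of $\Dyn$ to inner automorphisms. One small slip to watch when you write it up: with the paper's convention $w_\sigma(\Pi)={}^\sigma\Pi$, it is conjugation by $w_\sigma^{-1}$ (not $w_\sigma$) that carries ${}^\sigma\Pi$ back to $\Pi$, which is exactly why the cocycle identity $w_\sigma^{-1}=\sigma(w_{\sigma^{-1}})$ enters and produces the generalized Satake formula---you already flag this bookkeeping as the delicate point.
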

	\begin{ex}\label{ex:dynkin}
		Suppose $\Gamma=\bZ/2\bZ$. Regard $\Pi$ as a $\Gamma$-set by the $\ast$-action. Then we have $\Dyn G\cong
		\coprod_{\overset{\cO\in\Gamma\backslash\Pi}{|\cO|=1}}
		\Spec k\coprod
		\coprod_{\overset{\cO\in\Gamma\backslash\Pi}{|\cO|=2}}
		\Spec k'$.
	\end{ex}
	\subsection{Galois descent of associated bundles on partial flag schemes}
	In this section, we discuss existence and uniqueness of data of Galois descent of equivariant line bundles on partial flag schemes. Let $k\to k'$ be a Galois extension of commutative rings or an infinite Galois extension of fields of Galois group $\Gamma$, $G$ be a reductive group scheme over $k$, and $P'$ be a parabolic subgroup of $G\otimes_k k'$. Suppose that for every element $\sigma\in\Gamma$, there exists an element $w_\sigma\in G(k')$ such that ${}^\sigma P' = w_\sigma P'w_\sigma^{-1}$. Then $t(P')$ descends to a parabolic type $x\in (\type G)(k)$. In particular, $(G\otimes_k k')/P'$ is equipped with the structure of a projective $k$-scheme. In fact, the set of the isomorphisms $(G\otimes_k k')/{}^\sigma P'\cong (G\otimes_k k')/P';~g{}^\sigma P' \mapsto gw_{\sigma} P'$ determines a datum of Galois descent of $\cP_{G,x}$.
	
	Let $\lambda$ be a character of $P'$. Then we can define a line bundle $\cL_\lambda$ on $(G\otimes_k k')/P'$ by $(G\otimes_k k')\times^{P'} \lambda$ (see \cite{MR2015057} I.5.8, I.5.9\footnote{Precisely speaking, Jantzen assumed that the canonical map $G\otimes_k k'\to (G\otimes_k k')/P'$ is a locally trivial fibration in the Zariski topology for $(G\otimes_k k')\times^{P'} \lambda$ being a line bundle. However, the map $G\otimes_k k' \to (G\otimes_k k')/P'$ is locally trivial only in the \'etale topology. One can still prove that $(G\otimes_k k')\times^{P'} \lambda$ is a line bundle by applying a similar argument to Jantzen's in the \'etale topology and Hilbert's theorem 90.}).
	\begin{rem}\label{rem:associatedbundle}
		If $k'$ is a PID, every $G\otimes_k k'$-equivariant line bundle on $(G\otimes_k k')/P'$ is of the form $\cL_\lambda$. More generally, let $G$ be a reductive group scheme over a PID $k$, and $P\subset G$ be a parabolic subgroup. Then every $G$-equivariant line bundle $\cL$ on $G/P$ is of the form $G\times^P \lambda$, where $\lambda\in X^\ast(P)$. In fact, restrict $\cL$ to the base point of $G/P$, i.e., take the pullback along the map $\Spec k\to G/P$ corresponding to $P\in (G/P)(k)$, to get a $P$-equivariant line bundle on $\Spec k$. Since $k$ is a PID, we can identify it with a character $\lambda\in X^\ast(P)$. Extend it $G$-linearly to get $G\times \lambda\to \cL$. It follows by definition of $\lambda$ that it descends to a $G$-equivariant homomorphism $G\times^P \lambda\to \cL$ of $G$-equivariant line bundles. This is clearly an isomorphism. From general perspectives, one can deduce this correspondence from twice use of \cite{MR4225278} Theorem 14.87 (see \cite{MR2838836} Section 5.2.16 for example if necessary). In fact, note that $G$ is equipped with commuting left and right actions of $G$ and $P$.
	\end{rem}
	We wish to classify data of Galois descent on the quasi-coherent sheaf corresponding to $\cL_\lambda$ (\cite{MR4225278} (14.20), (14.21)). For this, in virtue of the definition of $\cL_\lambda$, we may and do study data of Galois descent on the bundle $\cL_\lambda$ itself rather than the corresponding sheaf in this paper.
	\begin{lem}
		For $\sigma\in\Gamma$, $(g,v) \mapsto (gw_\sigma,v)$ determines a well-defined $(G\otimes_k k')$-equivariant isomorphism $\Psi_\sigma: (G\otimes_k k')\times^{{}^\sigma P'} w_\sigma\lambda\cong (G\otimes_k k')\times^{P'} \lambda$ of line bundles on $(G\otimes_k k')/P'$ under the identification
		$(G\otimes_k k')/{}^\sigma P' \cong (G\otimes_k k')/P'$.
	\end{lem}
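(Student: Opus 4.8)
The plan is to realize $\Psi_\sigma$ as the descent of an elementary map between the ``pre-quotient'' spaces. Write $G'=G\otimes_k k'$. Recall from the construction of the associated bundle that $\cL_\lambda=G'\times^{P'}\lambda$ is the fppf (equivalently \'etale) quotient of the trivial line bundle $G'\times\bG_a$ by the action $p\cdot(g,v)=(gp^{-1},\lambda(p)v)$ of $P'$, so that its $A$-points are classes $[g,v]$ subject to $[gp^{-1},\lambda(p)v]=[g,v]$; likewise $G'\times^{{}^\sigma P'}w_\sigma\lambda$ is the quotient of $G'\times\bG_a$ by the action $p\cdot(g,v)=(gp^{-1},(w_\sigma\lambda)(p)v)$ of ${}^\sigma P'=w_\sigma P'w_\sigma^{-1}$, where $w_\sigma\lambda\in X^\ast({}^\sigma P')$ is $(w_\sigma\lambda)(t)=\lambda(w_\sigma^{-1}tw_\sigma)$.

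First I would record the group-theoretic input. Since ${}^\sigma P'=w_\sigma P'w_\sigma^{-1}$, conjugation $c_\sigma\colon{}^\sigma P'\xrightarrow{\ \sim\ }P'$, $p\mapsto w_\sigma^{-1}pw_\sigma$, is an isomorphism of $k'$-group schemes, and the definition of the twisted character gives the identity $\lambda\circ c_\sigma=w_\sigma\lambda$ in $X^\ast({}^\sigma P')$.

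Next I would exhibit the map at the pre-quotient level: let $\widetilde\Psi_\sigma\colon G'\times\bG_a\to G'\times\bG_a$ be $(g,v)\mapsto(gw_\sigma,v)$, i.e.\ right translation by $w_\sigma$ on the group factor and the identity on the line. The only real computation is that $\widetilde\Psi_\sigma$ carries the ${}^\sigma P'$-action on the source to the $P'$-action on the target along $c_\sigma$: for an affine $k'$-algebra $A$ and $p\in{}^\sigma P'(A)$,
\[\widetilde\Psi_\sigma\bigl(p\cdot(g,v)\bigr)=\bigl(gp^{-1}w_\sigma,\ (w_\sigma\lambda)(p)v\bigr)=\bigl((gw_\sigma)c_\sigma(p)^{-1},\ \lambda(c_\sigma(p))v\bigr)=c_\sigma(p)\cdot\widetilde\Psi_\sigma(g,v),\]
using $gp^{-1}w_\sigma=(gw_\sigma)(w_\sigma^{-1}pw_\sigma)^{-1}$ and $\lambda\circ c_\sigma=w_\sigma\lambda$. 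Hence $\widetilde\Psi_\sigma$ descends to a morphism $\Psi_\sigma\colon G'\times^{{}^\sigma P'}w_\sigma\lambda\to G'\times^{P'}\lambda$ of line bundles lying over the base isomorphism $G'/{}^\sigma P'\xrightarrow{\ \sim\ }G'/P'$, $g{}^\sigma P'\mapsto gw_\sigma P'$, which is precisely the identification of base schemes fixed in the discussion above. It is $G'$-equivariant because left translation by $G'$ commutes with right translation by $w_\sigma$ (and with scaling of the $\bG_a$-factor); it is the identity on the line factor, hence a homomorphism of line bundles; and it is invertible, the inverse being induced by $(g,v)\mapsto(gw_\sigma^{-1},v)$, which is well defined by the same computation with $(w_\sigma,\lambda)$ replaced by $(w_\sigma^{-1},w_\sigma\lambda)$ together with the identity $w_\sigma^{-1}(w_\sigma\lambda)=\lambda$.

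The one point needing care — rather than a genuine obstacle — is the descent step itself, in view of the footnote's remark that $G'\to G'/P'$ is only \'etale-locally trivial. This is formal: the associated bundles are fppf quotient sheaves of the pre-quotient data, and an equivariant morphism of the latter induces a morphism of the former; alternatively, one passes to a common \'etale cover trivializing both bundles, writes each as a rank-one free module glued by a $1$-cocycle, and glues the local copies of $\widetilde\Psi_\sigma$, Hilbert's Theorem 90 being invoked exactly as in the construction of $\cL_\lambda$ recalled above. Either way the verification is routine.
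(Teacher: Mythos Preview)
Your proof is correct and takes essentially the same approach as the paper: both verify that the pre-quotient map $(g,v)\mapsto(gw_\sigma,v)$ respects the equivalence relations via the same elementary computation, with the only difference being that the paper parameterizes elements of ${}^\sigma P'$ as $w_\sigma p w_\sigma^{-1}$ with $p\in P'$ while you work directly with $p\in{}^\sigma P'$ and the conjugation isomorphism $c_\sigma$. Your added remarks on the inverse and on the descent step are more explicit than the paper's one-line ``clear by construction'', but the content is the same.
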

	\begin{proof}
		The map $\Psi_\sigma$ is well-defined from
		\[\begin{split}
			(gw_\sigma pw_\sigma^{-1},
			w_\sigma p^{-1} w_\sigma^{-1}v)
			&=(g w_\sigma pw_\sigma^{-1},(w\lambda)(w_\sigma p^{-1} w_\sigma^{-1})v)\\
			&=(g w_\sigma pw_\sigma^{-1},\lambda(p)^{-1}v)\\
			&\mapsto (gw_\sigma p,\lambda(p)^{-1}v)\\
			&=(gw_\sigma,v),
		\end{split}\]
		where $A'$ is a $k'$-algebra, $g\in G(A')$, $p\in P'(A')$, and $v\in A'$. It is clear by construction that $\varphi_\sigma$ is a $(G\otimes_k k')$-equivariant isomorphism of line bundles.
	\end{proof}
	This shows that the associated line bundle $\cL_\lambda$ descends to a $G$-equivariant line bundle on $\cP_{G,x}$ only if ${}^\sigma\lambda=w_\sigma\lambda$.
	
	Suppose that the equality ${}^\sigma\lambda=w_\sigma\lambda$ holds for every element $\sigma\in\Gamma$. To see whether $(\Psi_\sigma)_{\sigma\in\Gamma}$ determines a datum of Galois descent of $\cL_\lambda$, we shall compute $\Psi_\sigma\circ {}^\sigma\Psi_\tau\circ \Psi_{\sigma\tau}^{-1}$ for a pair $\sigma,\tau$ of elements of $\Gamma$:
	\[\begin{split}
		(\Psi_\sigma\circ {}^\sigma\Psi_\tau\circ \Psi_{\sigma\tau}^{-1})(g,v)
		&=(\Psi_\sigma\circ {}^\sigma\Psi_\tau)(gw_{\sigma\tau}^{-1},v)
		\\
		&=\Psi_{\sigma}(gw_{\sigma\tau}^{-1}\sigma(w_\tau),v)\\
		&=(gw_{\sigma\tau}^{-1}\sigma(w_\tau)w_\sigma,v).
	\end{split}\]
	\begin{lem}
		We have $w_{\sigma\tau}^{-1}\sigma(w_\tau)w_\sigma
		\in P' (k')$.
	\end{lem}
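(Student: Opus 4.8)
The plan is to show that the element $n\coloneqq w_{\sigma\tau}^{-1}\sigma(w_\tau)w_\sigma$ of $G(k')$ normalizes $P'$, and then to invoke the standard fact that a parabolic subgroup of a reductive group scheme coincides with its own scheme-theoretic normalizer (see \cite{MR0218364} or \cite{MR3362641}). Note that we are \emph{not} assuming the cocycle condition on the $w_\sigma$ here, so this lemma exactly measures its failure modulo $P'$.

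First I would transport the defining relation of $w_\tau$ through the Galois twist by $\sigma$. Applying the functor ${}^\sigma(-)$ to the identity ${}^\tau P'=w_\tau P'w_\tau^{-1}$, and using that ${}^\sigma\circ{}^\tau={}^{\sigma\tau}$ together with the strict compatibility of ${}^\sigma(-)$ with the multiplication and inversion morphisms of $G$ — which holds because $G$ and its group law come from $k$ by base change, so that ${}^\sigma(ghg^{-1})=\sigma(g)\,{}^\sigma(h)\,\sigma(g)^{-1}$ under the canonical identification ${}^\sigma(G\otimes_k k')\cong G\otimes_k k'$ — I obtain
\[{}^{\sigma\tau}P'=\sigma(w_\tau)\,({}^\sigma P')\,\sigma(w_\tau)^{-1}.\]
Substituting the hypothesis ${}^\sigma P'=w_\sigma P'w_\sigma^{-1}$ yields ${}^{\sigma\tau}P'=\bigl(\sigma(w_\tau)w_\sigma\bigr)P'\bigl(\sigma(w_\tau)w_\sigma\bigr)^{-1}$. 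Comparing this with the hypothesis ${}^{\sigma\tau}P'=w_{\sigma\tau}P'w_{\sigma\tau}^{-1}$ applied to the element $\sigma\tau\in\Gamma$, I conclude $nP'n^{-1}=P'$, i.e.\ $n$ is a $k'$-point of the normalizer of $P'$ in $G\otimes_k k'$.

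Finally, since $P'$ is parabolic in the reductive $k'$-group scheme $G\otimes_k k'$, that normalizer is $P'$ itself, so $n\in P'(k')$, which is the claim. The only step needing genuine care is the compatibility of ${}^\sigma(-)$ with conjugation used above; this is bookkeeping rather than substance, since $G$ and all of its structure morphisms are obtained from $k$ by base change (in the infinite Galois extension of fields case one uses the same canonical identifications, compatibly with the profinite topology on $\Gamma$), so I would record this compatibility once and the remainder is a two-line diagram chase. I do not anticipate any real obstacle.
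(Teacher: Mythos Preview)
Your proof is correct and follows essentially the same approach as the paper: both show that $n=w_{\sigma\tau}^{-1}\sigma(w_\tau)w_\sigma$ normalizes $P'$ by unwinding the relations ${}^\sigma P'=w_\sigma P'w_\sigma^{-1}$, ${}^\tau P'=w_\tau P'w_\tau^{-1}$, and ${}^{\sigma\tau}P'=w_{\sigma\tau}P'w_{\sigma\tau}^{-1}$, and then conclude via the self-normalizing property of parabolics (the paper cites \cite{MR0218364} Proposition 1.2 for this). The only difference is cosmetic: the paper computes $nP'n^{-1}$ directly as a chain of equalities, whereas you first derive ${}^{\sigma\tau}P'=(\sigma(w_\tau)w_\sigma)P'(\sigma(w_\tau)w_\sigma)^{-1}$ and then compare.
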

	\begin{proof}
		Observe that
		\[\begin{split}
			(w_{\sigma\tau}^{-1}\sigma(w_\tau)w_\sigma) P'
			(w_{\sigma\tau}^{-1}\sigma(w_\tau)w_\sigma)^{-1}
			&=(w_{\sigma\tau}^{-1}\sigma(w_\tau)) {}^\sigma P'
			(w_{\sigma\tau}^{-1}\sigma(w_\tau))^{-1}\\
			&=w_{\sigma\tau}^{-1} {}^\sigma (w_\tau P' w_\tau^{-1}) w_{\sigma\tau}\\
			&=w_{\sigma\tau}^{-1} {}^\sigma ({}^\tau P') w_{\sigma\tau}\\
			&=w_{\sigma\tau}^{-1} {}^{\sigma\tau} P' w_{\sigma\tau}\\
			&=P'.
		\end{split}\]
		The assertion now follows from \cite{MR0218364} Proposition 1.2.
	\end{proof}
	By this observation, we get
	\[\Psi_\sigma\circ {}^\sigma\Psi_\tau\circ \Psi_{\sigma\tau}^{-1}
	=\lambda(w_{\sigma\tau}^{-1}\sigma(w_\tau)w_\sigma)
	\id_{\cL_{\lambda}}.\]
	\begin{lem}
		\begin{enumerate}
			\renewcommand{\labelenumi}{(\arabic{enumi})}
			\item For $\sigma,\tau\in\Gamma$, set $\beta_\lambda(\sigma,\tau)=\lambda(w_{\sigma\tau}^{-1}\sigma(w_\tau)w_\sigma)$. Then $\beta_\lambda$ satisfies the cocycle condition.
			\item The cocycle $\beta_\lambda$ is independent of the choice of $w_\sigma$ as an element of $H^2(\Gamma,(k')^\times)$.
		\end{enumerate}
	\end{lem}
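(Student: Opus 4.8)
The plan is to verify both assertions by direct cocycle manipulations starting from the cocycle condition $w_{\sigma\tau}=\sigma(w_\tau)w_\sigma$ in $N_{G(k')}(H\otimes_k k')$ together with the hypothesis ${}^\sigma\lambda=w_\sigma\lambda$. For part (1), I would write $\beta_\lambda(\sigma,\tau)=\lambda(c_{\sigma,\tau})$ with $c_{\sigma,\tau}=w_{\sigma\tau}^{-1}\sigma(w_\tau)w_\sigma\in P'(k')$, and compute the coboundary $(\delta\beta_\lambda)(\sigma,\tau,\rho)={}^{\sigma}(\beta_\lambda(\tau,\rho))\,\beta_\lambda(\sigma\tau,\rho)^{-1}\beta_\lambda(\sigma,\tau\rho)\beta_\lambda(\sigma,\tau)^{-1}$. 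The point is that ${}^\sigma(\beta_\lambda(\tau,\rho))={}^\sigma\lambda({}^\sigma c_{\tau,\rho})=(w_\sigma\lambda)(\sigma(c_{\tau,\rho}))=\lambda(w_\sigma^{-1}\sigma(c_{\tau,\rho})w_\sigma)$, using the definition of the twisted character $w_\sigma\lambda$ and that $\sigma$ fixes $\lambda$ up to the $w_\sigma$-twist. So the coboundary becomes $\lambda$ applied to the product
\[
w_\sigma^{-1}\sigma(c_{\tau,\rho})w_\sigma\cdot c_{\sigma\tau,\rho}^{-1}\cdot c_{\sigma,\tau\rho}\cdot c_{\sigma,\tau}^{-1},
\]
which is an element of $P'(k')$; expanding each $c$ in terms of the $w$'s and repeatedly applying $w_{\sigma\tau}=\sigma(w_\tau)w_\sigma$ (together with $\sigma(w_\tau w_{\tau'})=\sigma(w_\tau)\sigma(w_{\tau'})$ and the analogous identity inside $\sigma(c_{\tau,\rho})$) makes everything telescope to the identity. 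This is the standard computation showing that the obstruction class of a "projective" descent datum is a $2$-cocycle; I expect it to be purely mechanical once the twisted-character bookkeeping is set up correctly.

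For part (2), suppose $w_\sigma'$ is another system of representatives with ${}^\sigma P'=w_\sigma'P'(w_\sigma')^{-1}$. Since $w_\sigma$ and $w_\sigma'$ have the same image in $W(G,H)(k')$ (both send $\Pi$ to ${}^\sigma\Pi$, by the uniqueness in \cite{MR3362641} Corollary 5.1.11), we may write $w_\sigma'=w_\sigma h_\sigma$ with $h_\sigma\in H(k')$; more robustly, $w_\sigma^{-1}w_\sigma'\in P'(k')$ directly from the two normalization conditions via \cite{MR0218364} Proposition 1.2. Set $\mu(\sigma)=\lambda(w_\sigma^{-1}w_\sigma')\in(k')^\times$. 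Then plugging $w_\sigma'$ into the formula for $\beta$ and pulling out the correction terms, using ${}^\sigma\lambda=w_\sigma\lambda$ once more to rewrite $\lambda(w_\sigma^{-1}\sigma(w_\tau^{-1}w_\tau')w_\sigma)$ as ${}^\sigma\mu(\tau)$, one finds $\beta_{\lambda}'(\sigma,\tau)=\beta_\lambda(\sigma,\tau)\cdot{}^\sigma\mu(\tau)\,\mu(\sigma\tau)^{-1}\mu(\sigma)=\beta_\lambda(\sigma,\tau)(\delta\mu)(\sigma,\tau)$, so the two cocycles differ by the coboundary $\delta\mu$ and hence define the same class in $H^2(\Gamma,(k')^\times)$.

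The main obstacle is purely notational rather than conceptual: keeping track of the Galois action on the $w$'s, the distinction between an element of $\Gamma$ acting on $k'$ versus acting on characters via the $w_\sigma$-twist, and the fact that $\lambda$ is only multiplicative on $P'(k')$ (which is why the two preceding lemmas, guaranteeing $w_{\sigma\tau}^{-1}\sigma(w_\tau)w_\sigma\in P'(k')$ and more generally that the relevant correction elements lie in $P'(k')$, are essential). Once one fixes the convention $(w\lambda)(t)=\lambda(w^{-1}tw)$ and checks compatibility of $\sigma(-)$ with matrix products, both computations collapse to the classical verification that a $1$-cochain valued in a nonabelian group, whose "defect from being a cocycle" is central, produces a $2$-cocycle with class independent of the lift — exactly as in \cite{MR207712} Section 12. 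I would present part (1) as a displayed chain of equalities reducing $(\delta\beta_\lambda)(\sigma,\tau,\rho)$ to $\lambda(e)=1$, and part (2) as the identity $\beta_\lambda'=\beta_\lambda\cdot\delta\mu$.
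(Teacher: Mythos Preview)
Your proposal is correct. The two parts differ in style from the paper's argument as follows.

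For part~(1), the paper does not manipulate the group elements $c_{\sigma,\tau}$ directly. Instead it uses the line bundle isomorphisms $\Psi_\sigma:{}^\sigma\cL_\lambda\to\cL_\lambda$ constructed just before the lemma: since $\Psi_\sigma\circ{}^\sigma\Psi_\tau\circ\Psi_{\sigma\tau}^{-1}=\beta_\lambda(\sigma,\tau)\id_{\cL_\lambda}$, the cocycle identity drops out of associativity of composition (two ways of collapsing $\Psi_\sigma\circ{}^\sigma\Psi_\tau\circ{}^{\sigma\tau}\Psi_\rho\circ\Psi_{\sigma\tau\rho}^{-1}$). Your route is the underlying group-theoretic computation: the key identity is $c_{\sigma,\tau\rho}^{-1}\,c_{\sigma\tau,\rho}\,c_{\sigma,\tau}=w_\sigma^{-1}\sigma(c_{\tau,\rho})w_\sigma$ in $G(k')$, which you should display explicitly rather than leave as ``everything telescopes''. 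Both arguments are short; the paper's is slightly more conceptual (it is literally the statement that the obstruction to a projective descent datum is a $2$-cocycle), while yours avoids invoking the geometry of $\cL_\lambda$ at all. One caution: your opening sentence says the cocycle condition $w_{\sigma\tau}=\sigma(w_\tau)w_\sigma$ holds in $N_{G(k')}(H\otimes_k k')$, but in the generality of this section the $w_\sigma$ are arbitrary elements of $G(k')$ conjugating $P'$ to ${}^\sigma P'$ and need not normalize $H$; the relation holds only in $W(G,H)(k')$ in the special setting of \S2.1, and in general one only has $c_{\sigma,\tau}\in P'(k')$ from the preceding lemma. Your actual computation does not use the stronger claim, so just correct the framing.

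For part~(2), your argument is essentially the paper's: write $w_\sigma'=w_\sigma p_\sigma$ with $p_\sigma\in P'(k')$, set $c_\sigma=\lambda(p_\sigma)$, and verify $\beta_\lambda'(\sigma,\tau)=\beta_\lambda(\sigma,\tau)\,\sigma(c_\tau)c_\sigma c_{\sigma\tau}^{-1}$ using ${}^\sigma\lambda=w_\sigma\lambda$ exactly as you indicate.
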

	\begin{proof}
		Part (1) follows from the equality
		\[\beta_{V'}(\sigma,\tau\rho)
		\beta_{V'}(\sigma\tau,\rho)^{-1}
		\beta_{V'}(\sigma,\tau)^{-1}\id_{\cL_\lambda}=
		\sigma(\beta_{V'}(\tau,\rho))^{-1}\id_{\cL_\lambda}.\]
		In fact, this follows from
		\begin{flalign*}
			&\beta_{V'}(\sigma,\tau\rho)
			\beta_{V'}(\sigma\tau,\rho)^{-1}
			\beta_{V'}(\sigma,\tau)^{-1}\id_{\cL_\lambda}\\
			&=
			\circ \varphi_\sigma\circ{}^\sigma\varphi_{\tau\rho}\circ \varphi_{\sigma\tau\rho}^{-1}
			\circ \varphi_{\sigma\tau\rho} \circ
			{}^{\sigma\tau}\varphi_\rho^{-1}\circ \varphi_{\sigma\tau}^{-1}
			\circ \varphi_{\sigma\tau} \circ{}^\sigma\varphi_\tau^{-1}\circ \varphi_\sigma^{-1}\\
			&= \varphi_\sigma\circ{}^\sigma\varphi_{\tau\rho}\circ
			{}^{\sigma\tau}\varphi_\rho^{-1}\circ \circ{}^\sigma\varphi_\tau^{-1}\circ \varphi_\sigma^{-1}\\
			&=\varphi_\sigma\circ{}^\sigma(\varphi_{\tau\rho}\circ
			{}^{\tau}\varphi_\rho^{-1}\circ \varphi_\tau^{-1})\circ \varphi_\sigma^{-1}\\
			&=\varphi_\sigma\circ
			{}^\sigma(\beta_{V'}(\tau,\rho)^{-1}\id_{\cL_\lambda})
			\circ\varphi_\sigma^{-1}\\
			&=\varphi_\sigma\circ
			\sigma(\beta_{V'}(\tau,\rho))^{-1}\id_{{}^\sigma\cL_\lambda}\circ
			\varphi_\sigma^{-1}\\
			&=\sigma(\beta_{V'}(\tau,\rho))^{-1} \id_{\cL_\lambda}.
		\end{flalign*}
		
		We next prove (2). Fix $w_\sigma$ for each $\sigma\in\Gamma$. Then the other choices of $w_\sigma$ are of the form $w_\sigma p_\sigma$, where $p_\sigma\in P'(k')$. In this proof, let $\beta_\lambda'$ denote the cocycle attached to $(w_\sigma p_\sigma)$. Set $c_\sigma= \lambda(p_\sigma)$. We complete the proof by showing $\beta'_\lambda(\sigma,\tau)=\beta_\lambda(\sigma,\tau) \sigma(c_\tau) c_\sigma c_{\sigma\tau}^{-1}$ for $\sigma,\tau\in\Gamma$.
		
		Observe that we have
		\[\begin{split}
			(w_{\sigma\tau}\lambda)
			(\sigma(w_\tau p_\tau)w_\sigma w_{\sigma\tau}^{-1})
			&=({}^{\sigma\tau}\lambda)
			(\sigma(w_\tau p_\tau)w_\sigma w_{\sigma\tau}^{-1})\\
			&=({}^{\sigma}({}^\tau\lambda))
			(\sigma(w_\tau p_\tau)w_\sigma w_{\sigma\tau}^{-1})\\
			&=\sigma({}^\tau\lambda(w_\tau p_\tau 
			\sigma^{-1}(w_\sigma w_{\sigma\tau}^{-1})))\\
			&=\sigma((w_\tau\lambda)(w_\tau p_\tau 
			\sigma^{-1}(w_\sigma w_{\sigma\tau}^{-1})))\\
			&=\sigma(\lambda(p_\tau 
			\sigma^{-1}(w_\sigma w_{\sigma\tau}^{-1})w_\tau))\\
			&=\sigma(\lambda(p_\tau))
			\sigma(\lambda(\sigma^{-1}(w_\sigma w_{\sigma\tau}^{-1})w_\tau))\\
			&=\sigma(c_\tau){}^\sigma\lambda
			(w_\sigma w_{\sigma\tau}^{-1}\sigma(w_\tau))\\
			&=\sigma(c_\tau)(w_\sigma\lambda)
			(w_\sigma w_{\sigma\tau}^{-1}\sigma(w_\tau))\\
			&=\sigma(c_\tau)\lambda
			(w_{\sigma\tau}^{-1}\sigma(w_\tau)w_\sigma)\\
			&=\sigma(c_\tau)\beta_\lambda(\sigma,\tau).
		\end{split}\]
		The assertion now follows from
		\[\begin{split}
			\beta'_\lambda(\sigma,\tau)
			&=\lambda(p_{\sigma\tau}^{-1} w^{-1}_{\sigma\tau}
			\sigma(w_\tau p_\tau)w_\sigma p_\sigma)\\
			&=\lambda(p_{\sigma\tau}^{-1}) \lambda(w^{-1}_{\sigma\tau}
			\sigma(w_\tau p_\tau)w_\sigma)\lambda(p_\sigma)\\
			&=(w_{\sigma\tau}\lambda)
			(\sigma(w_\tau p_\tau)w_\sigma w_{\sigma\tau}^{-1})c_\sigma c_{\sigma\tau}^{-1} \\
			&=\beta_\lambda(\sigma,\tau) \sigma(c_\tau) 
			c_\sigma c_{\sigma\tau}^{-1}.
		\end{split}\]
	\end{proof}
	\begin{thm}\label{thm:descentofllambda}
		The equivariant line bundle $\cL_\lambda$ admits a datum of Galois descent to that on $\cP_{G,x}$ if the following conditions are satisfied:
		\begin{enumerate}
			\renewcommand{\labelenumi}{(\roman{enumi})}
			\item The equality ${}^\sigma\lambda=w_\sigma\lambda$ holds for $\sigma\in \Gamma$. 
			\item The cocycle $\beta_\lambda$ is the trivial cohomology class.
		\end{enumerate}
		The converse follows if $k'$ is Noetherian.
	\end{thm}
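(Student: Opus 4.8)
The plan is to establish the two implications separately. The forward one is essentially a rescaling of the isomorphisms $\Psi_\sigma$ already produced above; the converse needs one genuinely geometric input, and that is where the hypothesis that $k'$ be Noetherian is used.

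Suppose first that (i) and (ii) hold. Under (i) the isomorphisms $\Psi_\sigma$ of the lemma above are $(G\otimes_k k')$-equivariant isomorphisms ${}^\sigma\cL_\lambda\xrightarrow{\sim}\cL_\lambda$ lying over the base isomorphisms $g\,{}^\sigma P'\mapsto g w_\sigma P'$ that constitute the chosen descent datum on $\cP_{G,x}$, and we have already computed $\Psi_\sigma\circ{}^\sigma\Psi_\tau\circ\Psi_{\sigma\tau}^{-1}=\beta_\lambda(\sigma,\tau)\,\id_{\cL_\lambda}$. By (ii) choose a $1$-cochain $c\colon\Gamma\to(k')^\times$ with $\beta_\lambda(\sigma,\tau)=\sigma(c_\tau)\,c_{\sigma\tau}^{-1}\,c_\sigma$ and set $\varphi_\sigma=c_\sigma^{-1}\Psi_\sigma$. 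Then $\varphi_\sigma$ is still $(G\otimes_k k')$-equivariant and still lies over the descent datum of $\cP_{G,x}$, and substitution gives $\varphi_\sigma\circ{}^\sigma\varphi_\tau\circ\varphi_{\sigma\tau}^{-1}=\id_{\cL_\lambda}$, so $(\varphi_\sigma)_{\sigma\in\Gamma}$ is a datum of Galois descent on $\cL_\lambda$ compatible with the one on $\cP_{G,x}$. Faithfully flat descent of quasi-coherent sheaves (\cite{MR4225278} (14.20), (14.21)), applied in the category of $(G\otimes_k k')$-equivariant sheaves — legitimate since $G$ is defined over $k$ — then produces the desired $G$-equivariant line bundle on $\cP_{G,x}$; for an infinite Galois extension of fields one first descends $G$, $P'$, $\lambda$, the $w_\sigma$ and the $\varphi_\sigma$ to a finite Galois subextension and argues there.

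Conversely, suppose $\cL_\lambda$ carries a datum of Galois descent $(\phi_\sigma)_{\sigma\in\Gamma}$ compatible with the one on $\cP_{G,x}$, so each $\phi_\sigma\colon{}^\sigma\cL_\lambda\xrightarrow{\sim}\cL_\lambda$ is $(G\otimes_k k')$-equivariant and lies over $g\,{}^\sigma P'\mapsto g w_\sigma P'$. To obtain (i), note that independently of (i) the lemma above gives an isomorphism $\Psi_\sigma\colon(G\otimes_k k')\times^{{}^\sigma P'}(w_\sigma\lambda)\xrightarrow{\sim}\cL_\lambda$ of $(G\otimes_k k')$-equivariant line bundles over the same base map $g\,{}^\sigma P'\mapsto g w_\sigma P'$; composing $\phi_\sigma^{-1}\circ\Psi_\sigma$ therefore yields an isomorphism $(G\otimes_k k')\times^{{}^\sigma P'}(w_\sigma\lambda)\cong{}^\sigma\cL_\lambda=(G\otimes_k k')\times^{{}^\sigma P'}({}^\sigma\lambda)$ of $(G\otimes_k k')$-equivariant line bundles on $(G\otimes_k k')/{}^\sigma P'$ over the identity. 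Pulling back along the base point $\Spec k'\to(G\otimes_k k')/{}^\sigma P'$ identifies the two sides with the ${}^\sigma P'$-equivariant line bundles on $\Spec k'$ attached to the characters $w_\sigma\lambda$ and ${}^\sigma\lambda$, and an equivariant isomorphism between those forces the characters to coincide; hence $w_\sigma\lambda={}^\sigma\lambda$ for every $\sigma$, which is (i).

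Granting (i), $\phi_\sigma\circ\Psi_\sigma^{-1}$ is an automorphism of the line bundle $\cL_\lambda$ over $(G\otimes_k k')/P'$, hence multiplication by a global section $c_\sigma$ of $\cO^\times$. Here Noetherianity enters: the structure morphism $(G\otimes_k k')/P'\to\Spec k'$ is proper and flat with geometrically connected and geometrically reduced fibres, so for $k'$ Noetherian cohomology and base change gives $\Gamma\big((G\otimes_k k')/P',\cO\big)=k'$ and thus $\Gamma\big((G\otimes_k k')/P',\cO^\times\big)=(k')^\times$; therefore $c_\sigma\in(k')^\times$ and $\phi_\sigma=c_\sigma\Psi_\sigma$. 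Substituting this into $\phi_\sigma\circ{}^\sigma\phi_\tau\circ\phi_{\sigma\tau}^{-1}=\id_{\cL_\lambda}$ and using $\Psi_\sigma\circ{}^\sigma\Psi_\tau\circ\Psi_{\sigma\tau}^{-1}=\beta_\lambda(\sigma,\tau)\,\id_{\cL_\lambda}$ gives $\beta_\lambda(\sigma,\tau)=\sigma(c_\tau)^{-1}c_{\sigma\tau}c_\sigma^{-1}$, so $\beta_\lambda$ is the coboundary of $(c_\sigma^{-1})_{\sigma\in\Gamma}$ and is trivial in $H^2(\Gamma,(k')^\times)$, which is (ii). I expect the identification $\Gamma\big((G\otimes_k k')/P',\cO^\times\big)=(k')^\times$ — equivalently, the computation of $\mathrm{Aut}^{G\otimes_k k'}(\cL_\lambda)$ — to be the only genuinely non-formal ingredient, and the place where properness and the Noetherian hypothesis are really needed; everything else is bookkeeping with the already-computed cocycle $\beta_\lambda$.
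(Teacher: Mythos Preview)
Your proof is correct and follows essentially the same route as the paper's: rescale the $\Psi_\sigma$ by a trivializing $1$-cochain for the forward direction, and for the converse use that $(G\otimes_k k')/P'$ is proper with geometrically connected and reduced fibres (whence, via cohomology and base change over Noetherian $k'$, $\Gamma\big((G\otimes_k k')/P',\cO\big)=k'$) to write the given descent isomorphisms as scalar multiples of the $\Psi_\sigma$ and read off that $\beta_\lambda$ is a coboundary. Your treatment of condition~(i) in the converse is in fact more explicit than the paper's, which simply refers back to the observation preceding the theorem.
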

	
	\begin{proof}
		Suppose that (i) and (ii) are satisfied. Then there exists a set $(c_\sigma)$ of units of $k'$ indexed by $\sigma\in\Gamma$ such that $\beta_{\lambda}(\sigma,\tau)
		=c_\sigma \sigma(c_\tau) c_{\sigma\tau}^{-1}$ for $\sigma,\tau\in\Gamma$. The isomorphisms $(c_\sigma^{-1}\Psi_\sigma)$ form a datum of Galois descent of $\cL_{\lambda}$ by
		\[\begin{split}
			(c_\sigma^{-1}\Psi_\sigma)\circ
			(\sigma(c_\tau^{-1}){}^\sigma\Psi_\tau)
			\circ (c_{\sigma\tau}^{-1}\Psi_{\sigma\tau})^{-1}
			&=c_\sigma^{-1} \sigma(c_\tau^{-1}) c_{\sigma\tau}
			\Psi_\sigma\circ {}^\sigma\Psi_\tau\circ \Psi_{\sigma\tau}^{-1}\\
			&=c_\sigma^{-1} \sigma(c_\tau^{-1}) c_{\sigma\tau}
			\beta_\lambda(\sigma,\tau) \id_{\cL_\lambda}\\
			&=\id_{\cL_\lambda}.
		\end{split}\]
		
		Conversely, suppose that there exists a datum $(\Psi'_\sigma:{}^\sigma\cL_{\lambda}\cong \cL_{\lambda})$ of Galois descent. In particular, (i) follows by the former argument. Assume that $k'$ is Noetherian. Recall that the structure morphism $(G\otimes_k k')/P'\to \Spec k'$ is smooth projective with connected geometric fibers by \cite{MR0218364} Corollaire 3.6. Hence in view of \cite{MR0463157} Theorem 12.11, there exists a unit $c_\sigma$ of $k'$ such that $\Psi_\sigma=c_\sigma \Psi'_\sigma$
		for each $\sigma\in\Gamma$. Since $\Psi'_\sigma$ enjoys the cocycle condition, we have
		\[\begin{split}
			c_\sigma \sigma(c_\tau) c_{\sigma\tau}^{-1}
			\id_{\cL_\lambda}
			&=(c_\sigma\Psi_\sigma')\circ
			(\sigma(c_\tau){}^\sigma\Psi_\tau')
			\circ (c_{\sigma\tau}\Psi_{\sigma\tau}')^{-1}\\
			&=\Psi_\sigma\circ {}^\sigma\Psi_\tau\circ \Psi_{\sigma\tau}^{-1}\\
			&=\beta_\lambda(\sigma,\tau) \id_{\cL_\lambda}.
		\end{split}\]
		This shows that $\beta_{\lambda}$ is trivial in the Galois cohomology $H^2(\Gamma,(k')^\times)$.
	\end{proof}
	\begin{ex}\label{ex:defoverk}
		Suppose that $P'$ is defined over $k$, i.e., $P'=P\otimes_k k'$ for some parabolic subgroup $P\subset G$. Then we may put $w_\sigma=e$ for all $\sigma\in\Gamma$. Therefore $\lambda$ satisfies (i) in Theorem \ref{thm:descentofllambda} if and only if $\lambda$ is defined over $k$. If so, the cocycle $\beta_\lambda$ is trivial.
	\end{ex}
	\begin{ex}[\cite{MR3770183} Remark 6.4]
		Let $k\to k'$ be a Galois extension of nonzero commutative rings of Galois group $\Gamma$, $G$ be a reductive group scheme over $k$ with adjoint derived group (\cite{MR0218363} D\'efinition 4.3.3, D\'efinition 2.6.1, D\'efinition 2.7, Proposition 4.3.5), and $H\subset G$ be a maximal torus. Suppose that $(G\otimes_k k',H\otimes_k k')$ is split. Then $(\sD(G)\otimes_k k',(H\cap \sD(G))\otimes_k k')$ is split with
		\[\Delta(\sD(G)\otimes_k k',(H\cap \sD(G))\otimes_k k')
		\cong\Delta(G\otimes_k k',H\otimes_k k')\]
		by \cite{MR0218363} Proposition 6.2.7. Fix a pinning $\cE$ of $(\sD(G)\otimes_k k',(H\cap \sD(G))\otimes_k k')$ (\cite{MR0229653} D\'efinition 1.1). In view of \cite{MR0228503} Lemme 1.5, for $\sigma\in\Gamma$, there exists a unique element $w_\sigma\in N_{\sD(G)(k')}((H\cap \sD(G))\otimes_k k')$ such that $w_\sigma(-)w_\sigma^{-1}:(\sD(G)\otimes_k k',\cE)\cong (\sD(G)\otimes_k k',{}^\sigma\cE)$, where ${}^\sigma\cE$ is the base change of $\cE$ by $\sigma$. By the uniqueness, we obtain $w_{\sigma\tau}^{-1}\sigma(w_\tau)w_\sigma=e$ in $G(k')$ for all pairs $\sigma,\tau\in\Gamma$. In virtue of \cite{MR0218363} Proposition 6.2.8 (i), $w_\sigma$ belongs to $N_{G(k')}(H\otimes_k k')$. Let $B'\subset G\otimes_k k'$ be the Borel subgroup attached to $\cE$ (see \cite{MR0218363} Proposition 6.2.8 (ii) if necessary). It follows by definition that ${}^\sigma B'=w_\sigma B' w_\sigma^{-1}$ for $\sigma\in\Gamma$. Let $P'\subset G\otimes_k k'$ be a parabolic subgroup containing $B'$ such that ${}^\sigma P'=w_\sigma P'w_\sigma^{-1}$\footnote{We will see in Proposition \ref{prop:wsigmaautomatic} that this is equivalent to the conditions that ${}^\sigma P'$ is $G(k')$-conjugate to $P'$.}. Then for $\lambda\in X^\ast(P')$, $\cL_\lambda$ is defined over $k$ if and only if ${}^\sigma\lambda=w_\sigma\lambda$ for all $\sigma\in\Gamma$.
	\end{ex}
	\begin{ex}\label{ex:perfcase}
		We can follow a similar choice to the above when $k=F$ is a perfect field and $k'$ is its algebraic closure $\bar{F}$. In fact, let $H$ be a maximal torus of $G$, and $B'$ be a Borel subgroup of $G\otimes_F \bar{F}$ containing $H\otimes_F \bar{F}$. Then $H\cap \sD(G)$ and $B'\cap(\sD(G)\otimes_F \bar{F})$ are a maximal torus and a Borel subgroup of $\sD(G)$ respectively (\cite{MR0218363} Proposition 6.2.8). Fix a pinning $\cE$ of $(\sD(G)\otimes_F \bar{F},(H\cap \sD(G))\otimes_F \bar{F},B'\cap(\sD(G)\otimes_F \bar{F}))$. For each $\sigma\in\Gamma$, there exists an element $w_\sigma\in N_{\sD(G)(\bar{F})}((H\cap \sD(G))\otimes_F \bar{F})$ such that $w_\sigma(-)w^{-1}_\sigma:(\sD(G)\otimes_F \bar{F},\cE)\cong(\sD(G)\otimes_F \bar{F},{}^\sigma\cE)$ (see \cite{MR0228503} Lemme 1.5, 1.5.2). Moreover, $w_{\sigma\tau}^{-1}\sigma(w_\tau)w_\sigma$ belongs to the center of $\sD(G)$ by the uniqueness in \cite{MR0228503} Lemme 1.5. Since $Z_{\sD(G)}$ is finite, there exists a positive integer such that $(w_{\sigma\tau}^{-1}\sigma(w_\tau)w_\sigma)^n=e$. \cite{MR0218363} Proposition 6.2.8 implies $w_\sigma\in N_{G(\bar{F})}(H\otimes_F \bar{F})$ and
		${}^\sigma B'=w_\sigma B' w^{-1}_\sigma$. If we write $\Pi$ for the simple system attached to $(B',H\otimes_F \bar{F})$, we have ${}^\sigma\Pi=w_\sigma \Pi$. Let $\lambda$ be a character of $H\otimes_F \bar{F}$ satisfying ${}^\sigma \lambda=w_\sigma\lambda$ for every $\sigma\in\Gamma$. Then $\beta_\lambda$ is valued in the set of roots of unity of $\bar{F}$ by $(w_{\sigma\tau}^{-1}\sigma(w_\tau)w_\sigma)^n=e$.
	\end{ex}
	
	\begin{ex}[Class field theory]\label{ex:CFT}
		Let $k=F$ be a global field, and $k'=F^{\sep}$ be the separable closure of $F$. To study whether $\beta_\lambda$ is trivial, we may replace $F$ by a local field by \cite{MR4174395} Theorem 14.11 and the compatibility of the restriction map of the second Galois cohomology groups with the construction of $\beta_\lambda$. Suppose that $F$ is a non-archimedian local field. Write $\Gal(F^{\sep}/F)$ for the absolute Galois group of $F$ here. Assume that there is a finite Galois extension $F'/F$ of degree $n$ such that $P'$ admits a compatible $F'$-form $P'_{F'}\subset G\otimes_F F'$ , and that for each $\sigma\in \Gal(F'/F)$, there exists an element $w_\sigma'\in G(F')$ such that $w_\sigma' P'_{F'} (w_\sigma')^{-1}$, where $\Gal(F'/F)$ is the Galois group of $F'/F$. Then \cite{MR4174395} Corollary 8.10 implies that $\inv_F(\beta_\lambda)$ lies in $\frac{1}{n}\bZ/\bZ$ by (see \cite{MR4174395} Part II Section 8.2 for the definition of $\inv_F$). Assume also that the characteristic of $F$ is zero. Write $m=|Z_{\sD(G)}(\bar{F})|$. Then by Example \ref{ex:perfcase}, $\inv_F(\beta_\lambda)\in \frac{1}{(n,m)}\bZ/\bZ$, where $(n,m)$ is the greatest common divisor of $m$ and $n$. See Example \ref{ex:C/Rcase} for the case $F=\bR$.
	\end{ex}
	
	We next discuss the uniqueness of descent data up to isomorphisms.
	\begin{thm}\label{thm:uniqueness}
		Assume $k'$ Noetherian. Let $\lambda$ be any of a character of $P'$. Then the set of isomorphism classes of descent data on $\cL_\lambda$ is a principal $H^1(\Gamma,(k')^\times)$-set. In particular, the descent data on $\cL_\lambda$ are at most unique up to isomorphisms if $H^1(\Gamma,(k')^\times)$ vanishes.
	\end{thm}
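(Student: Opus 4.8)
The plan is to show that, once it is nonempty, the set of isomorphism classes of descent data on $\cL_\lambda$ is a torsor under $H^1(\Gamma,(k')^\times)$; if it is empty there is nothing to prove, the final assertion being vacuously true in that case (as with Theorem \ref{thmB}(2), a principal set is permitted to be empty). So I would fix once and for all a descent datum $(\Phi_\sigma)$ on $\cL_\lambda$ as a base point, and compare all others to it.

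The one geometric input is a rigidity statement: every $(G\otimes_k k')$-equivariant automorphism of $\cL_\lambda$ is multiplication by a unique element of $(k')^\times$. Indeed, as recalled in the proof of Theorem \ref{thm:descentofllambda}, the structure morphism $(G\otimes_k k')/P'\to\Spec k'$ is smooth projective with connected geometric fibers by \cite{MR0218364} Corollaire 3.6, so \cite{MR0463157} Theorem 12.11 gives $\Gamma((G\otimes_k k')/P',\cO)=k'$ (here one uses that $k'$ is Noetherian). Since $\cL_\lambda$ is invertible, its sheaf of $\cO$-linear automorphisms is $\cO^\times$, hence its global automorphisms are $(k')^\times$, and every scalar is automatically equivariant. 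In the infinite Galois case one adds the observation that $P'$, $\lambda$, the $w_\sigma$ and the $\Psi_\sigma$ all descend to a finite subextension of $k'/k$, so every cochain appearing below is locally constant and genuinely continuous.

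With this in hand the proof becomes cocycle bookkeeping, parallel to the computations already carried out for Theorem \ref{thm:descentofllambda}. Any collection $(\Phi'_\sigma)$ of $(G\otimes_k k')$-equivariant isomorphisms ${}^\sigma\cL_\lambda\cong\cL_\lambda$ has the form $\Phi'_\sigma=c_\sigma\Phi_\sigma$ with $c_\sigma\in(k')^\times$ uniquely determined by the rigidity input, and the identity $\Phi'_\sigma\circ{}^\sigma\Phi'_\tau\circ(\Phi'_{\sigma\tau})^{-1}=c_\sigma\,\sigma(c_\tau)\,c_{\sigma\tau}^{-1}\,\id_{\cL_\lambda}$ (which is the mechanism already used for $\beta_\lambda$) shows that $(\Phi'_\sigma)$ is again a descent datum precisely when $(c_\sigma)$ is a $1$-cocycle; conversely, every $1$-cocycle twists $(\Phi_\sigma)$ into a descent datum $(c_\sigma\Phi_\sigma)$. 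Hence the set of descent data on $\cL_\lambda$ is a torsor under $Z^1(\Gamma,(k')^\times)$, the action being $(c_\sigma)\cdot(\Phi_\sigma)=(c_\sigma\Phi_\sigma)$.

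Finally I would pass to isomorphism classes. An isomorphism of descent data $(\cL_\lambda,(\Phi_\sigma))\cong(\cL_\lambda,(c_\sigma\Phi_\sigma))$ is an element $d\in(k')^\times$ with $d\circ\Phi_\sigma=(c_\sigma\Phi_\sigma)\circ{}^\sigma d$ for all $\sigma$; since ${}^\sigma d$ is multiplication by $\sigma(d)$, this reads $c_\sigma=d\,\sigma(d)^{-1}$, i.e. $(c_\sigma)$ is a $1$-coboundary, and conversely every such $d$ furnishes such an isomorphism. Therefore the $Z^1(\Gamma,(k')^\times)$-action descends to a simply transitive action of $Z^1/B^1=H^1(\Gamma,(k')^\times)$ on the set of isomorphism classes, which is exactly the claim; and since a torsor under the trivial group has at most one point, the in-particular clause follows. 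The main, and essentially only nonroutine, obstacle is the rigidity statement $\mathrm{Aut}_{G\otimes_k k'}(\cL_\lambda)=(k')^\times$; everything else is formal and reuses the cocycle identities from the material preceding Theorem \ref{thm:descentofllambda}.
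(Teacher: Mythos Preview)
Your proof is correct and follows essentially the same approach as the paper: both establish the rigidity $\mathrm{Aut}(\cL_\lambda)=(k')^\times$ via \cite{MR0218364} Corollaire 3.6 and \cite{MR0463157} Theorem 12.11, deduce that the set of descent data is a $Z^1(\Gamma,(k')^\times)$-torsor, and then check that passing to isomorphism classes quotients exactly by $B^1$. Your remark on continuity in the infinite Galois case is a welcome addition that the paper leaves implicit.
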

	\begin{proof}	
		Let $S$ be the set of descent data on $\cL_\lambda$, and $Z^1$ be the abelian group of 1-cocyles $\Gamma\to (k')^\times$. Then $S$ is a principal $Z^1$-group for $(c_\sigma)\cdot (\cL_\lambda,\Psi_\sigma')\coloneqq (\cL_\lambda,c_\sigma \Psi_\sigma')$ by the cocycle condition for descent data and by the proof of the converse direction of Theorem \ref{thm:descentofllambda}.
		
		We prove that it descends to the simply transitive action of $H^1(\Gamma,(k')^\times)$ on the set of isomorphism classes of descent data on $\cL_\lambda$. The proof is completed by showing the following assertions:
		\begin{enumerate}
			\renewcommand{\labelenumi}{(\roman{enumi})}
			\item $Z^1$ respects isomorphisms of descent data. I.e., for $(c_\sigma)\in Z^1$ and \[(\cL_\lambda,\Psi_\sigma'),(\cL_\lambda,\Psi_\sigma'')\in S,\]
			we have $(\cL_\lambda,c_\sigma\Psi_\sigma')\cong(\cL_\lambda,c_\sigma\Psi_\sigma'')$ if $(\cL_\lambda,\Psi_\sigma')\cong(\cL_\lambda,\Psi_\sigma'')$.
			\item For $(\cL_\lambda,\Psi_\sigma'),(\cL_\lambda,\Psi_\sigma'')\in S$, they are isomorphic to each other if and only if there exists a unit $t\in (k')^\times$ such that $\Psi_\sigma'=\sigma(t)t^{-1}\Psi_\sigma''$ for all $\sigma\in\Gamma$.
		\end{enumerate}
		For (i), suppose that we are given an isomorphism
		$(\cL_\lambda,\Psi_\sigma') \cong (\cL_\lambda,\Psi_\sigma'')$. 
		The underlying scalar map on $\cL_\lambda$ clearly determines an isomorphism $(\cL_\lambda,c_\sigma\Psi_\sigma')
		\cong (\cL_\lambda,c_\sigma\Psi_\sigma'')$ for 1-cocyles $(c_\sigma)$.
		
		We next prove (ii). Let $(\cL_\lambda,\Psi_\sigma')\in S$. For $t\in (k')^\times$, the scalar map $t\id_{\cL_\lambda}$ determines an isomorphism $(\cL_\lambda,\sigma(t)t^{-1} \Psi_\sigma')\cong (\cL_\lambda,\Psi_\sigma')$. This shows the ``if'' direction. Conversely, suppose that we are given an isomorphism $(\cL_\lambda,\Psi_\sigma')\cong(\cL_\lambda,\Psi_\sigma'')$ of descent data. Since $\cL_{\lambda}$ is a line bundle, this isomorphism is given by $t\id_{\cL_\lambda}$ for a unit $t\in (k')^\times$. In particular, we obtain $\Psi_\sigma'=\sigma(t)t^{-1}\Psi_\sigma''$ by definition of morphisms of descent data. This completes the proof.
	\end{proof}
	\begin{ex}[Hilbert's theorem 90]\label{ex:vanishing}
		The cohomology $H^1(\Gamma,(k')^\times)$ vanishes if $k$ is a PID. In fact, if we are given a 1-cocycle $c$, it attaches a descent datum on $k'$ as a $k'$-module along the Galois extension $k\to k'$. Since $k$ is a PID, this datum is isomorphic to the trivial descent datum on $k'$. If we write $t$ for the image of $1\in k'$ under this isomorphism, we obtain $c=(\sigma(t)t^{-1})$. In particular, $c$ is a coboundary.
	\end{ex}
	In the rest, assume $\Gamma=\bZ/2\bZ$ to work more on the existence of data of Galois descent. Choose an element $w\in G(k')$ such that $\bar{P}'=w P' w^{-1}$.
	\begin{cor}\label{cor:quadcase}
		The equivariant line bundle $\cL_\lambda$ admits a datum of Galois descent to that on $\cP_{G,x}$ if the following conditions are satisfied:
		\begin{enumerate}
			\renewcommand{\labelenumi}{(\roman{enumi})}
			\item $\bar{\lambda}=w\lambda$.
			\item There exists a nonzero element $c\in F'$ such that $\lambda(\bar{w}w)=\bar{c}c$.
		\end{enumerate}
		The converse holds if $k'$ is Noetherian.
	\end{cor}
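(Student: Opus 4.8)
The plan is to deduce Corollary~\ref{cor:quadcase} directly from Theorem~\ref{thm:descentofllambda} by spelling out the case $\Gamma=\bZ/2\bZ$. First I would fix the family $(w_\sigma)_{\sigma\in\Gamma}$ by taking $w_e=e$ and letting $w$ play the role of $w_{\bar{\ }}$. This is legitimate because the hypothesis $\bar P'=wP'w^{-1}$, together with the trivial identity ${}^eP'=eP'e^{-1}$, is exactly what is demanded of the $w_\sigma$ at the start of this subsection, where no cocycle condition on the $w_\sigma$ is imposed. With this choice, condition~(i) of Theorem~\ref{thm:descentofllambda}, namely ${}^\sigma\lambda=w_\sigma\lambda$ for all $\sigma$, is automatic at $\sigma=e$ and reads $\bar\lambda=w\lambda$ at $\sigma=\bar{\ }$, so it is precisely condition~(i) of the corollary.

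Next I would evaluate $\beta_\lambda$. Since $\beta_\lambda(\sigma,\tau)=\lambda(w_{\sigma\tau}^{-1}\sigma(w_\tau)w_\sigma)$ and $w_e=e$, the value is $1$ whenever $\sigma$ or $\tau$ equals $e$, and the only remaining value is
\[\beta_\lambda(\bar{\ },\bar{\ })=\lambda(w_e^{-1}\,\bar w\,w)=\lambda(\bar w w),\]
which is meaningful because $\bar w w\in P'(k')$ by the preceding lemma (applied to $\sigma=\tau=\bar{\ }$), and which is a unit of $k'$ since $\lambda$ is a character. Thus $\beta_\lambda$ is the normalized $2$-cocycle determined by the single unit $\lambda(\bar w w)\in(k')^\times$.

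It then remains to translate triviality of this class into condition~(ii). For a $1$-cochain $c\colon\Gamma\to(k')^\times$ one has $(\delta c)(\sigma,\tau)=c_\sigma\,\sigma(c_\tau)\,c_{\sigma\tau}^{-1}$, so $(\delta c)(e,e)=c_e$; hence $\delta c=\beta_\lambda$ forces $c_e=1$, and then $(\delta c)(\bar{\ },\bar{\ })=c_{\bar{\ }}\,\overline{c_{\bar{\ }}}$. Therefore $\beta_\lambda$ is a coboundary if and only if there is a unit $c\in(k')^\times$ with $c\bar c=\lambda(\bar w w)$; conversely, such a $c$ produces the $1$-cochain with $c_e=1$, $c_{\bar{\ }}=c$ whose coboundary is $\beta_\lambda$. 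Finally, any nonzero element $c\in k'$ satisfying $c\bar c=\lambda(\bar w w)$ is automatically a unit, since $c\cdot(\bar c\,(c\bar c)^{-1})=1$, so condition~(ii) of the corollary is equivalent to the existence of such a unit. Combining these observations with Theorem~\ref{thm:descentofllambda} yields the asserted sufficiency, and its converse when $k'$ is Noetherian.

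I do not anticipate a genuine obstacle: the entire content is unwinding the $\bZ/2\bZ$-case of $\beta_\lambda$ and of $H^2(\bZ/2\bZ,(k')^\times)$. The one point deserving a line of care is that $\lambda(\bar w w)$ lies in the $\Gamma$-invariants of $(k')^\times$ --- needed for ``trivial in $H^2$'' to coincide with ``lies in the norm image'' --- but this is automatic, either from the cocycle identity already established for $\beta_\lambda$ or from the computation $\overline{c\bar c}=c\bar c$ once a witness $c$ is given.
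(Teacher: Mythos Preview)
Your proposal is correct and is exactly the approach the paper intends: the corollary is stated without proof in the paper, being an immediate specialization of Theorem~\ref{thm:descentofllambda} to $\Gamma=\bZ/2\bZ$, and your argument unwinds precisely this specialization. The only minor cosmetic point is that the paper's statement writes $F'$ where $k'$ is meant, but you have handled the ``nonzero versus unit'' issue correctly.
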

	\begin{ex}\label{ex:C/Rcase}
		Put $k=\bR$. Let us consider the setting of Example \ref{ex:perfcase}. Choose $w\in N_{G(\bC)} (H\otimes_\bR \bC)$ such that $\bar{\Pi}=w\Pi$ as in Example \ref{ex:perfcase}. 
		Let $\lambda$ be a character of $H\otimes_\bR \bC$ such that $\bar{\lambda}=w\lambda$. Then $\lambda(\bar{w}w)\in\bR$ by
		\[\overline{\lambda(\bar{w}w)}
		=\bar{\lambda}(w\bar{w})
		=(w\lambda)(w\bar{w})
		=\lambda(\bar{w}w).\]
		Since $\lambda(\bar{w}w)$ is a root of unity, we deduce $\lambda(\bar{w}w)\in\{\pm 1\}$. In particular, the cocycle $\beta_\lambda$ is trivial in the Galois cohomology if and only if $\lambda(\bar{w}w)=1$.
	\end{ex}
	Despite the $\bC/\bR$ case, it is impossible in general to normalize $w$ so that $\lambda(\bar{w}w)\in\{\pm 1\}$ by arithmetic obstructions:
	\begin{ex}\label{ex:counterexample}
		Set $q\in\{\pm 1\}$ and $I_q=\diag(3,q)$. Define a reductive group $G_q$ over $\bQ$ as a group $\bQ$-space by
		\[G_q(A)=\{g\in\SL_2(A\otimes_\bQ \bQ(\sqrt{-1})):~
		g^\ast I_q g=I_q\}\]
		(see also Example \ref{ex:symmpair} and Example \ref{ex:modelofupq}). We remark that we have an isomorphism
		\[G_q\otimes_\bQ \bQ(\sqrt{-1})\cong \SL_2;\]
		\[a\otimes 1+b\otimes\sqrt{-1}\mapsto a+b\sqrt{-1}\]
		\[\frac{g+I_q^{-1}(g^T)^{-1}I_q}{2}\otimes 1+\frac{g-I_q^{-1}(g^T)^{-1}I_q}{2\sqrt{-1}}\otimes \sqrt{-1}\leftmapsto g.\]
		We endow the conjugate action on $\SL_2$ by this isomorphism. Let $B'\subset \SL_2$ be the Borel subgroup of upper triangular matrices. Set
		\[w=\left(\begin{array}{cc}
			0 & 1\\
			-1& 0
		\end{array}
		\right)\in\SL_2(\bQ(\sqrt{-1})).\]
		Then $w$ satisfies $\bar{B}'=wB'w^{-1}$. We also have
		\[\bar{w}w=\left(\begin{array}{cc}
			-\frac{q}{3} &0\\
			0 & -3q
		\end{array}
		\right).\]
		Define a character $\lambda:B'\to\bG_m$ by $\lambda\left(\left(\begin{array}{cc}
			x & y \\
			0 & x^{-1}
		\end{array}
		\right)\right)=x$.
		It satisfies $w\lambda=\bar{\lambda}$. Note that $\lambda(\bar{w}w)=-\frac{q}{3}$. In particular, no $c\in\bQ(\sqrt{-1})^\times$ can satisfy  $\frac{\lambda(\bar{w}w)}{\bar{c}c}=-q$.
		
		More generally, it is evident that $\beta_{n\lambda}$ is trivial if and only if $n$ is even. One can explain it from the perspectives of Example \ref{ex:CFT}. We study (the images of) $\beta_{n\lambda}$ in the second Galois cohomology groups of $\bQ_p$ and $\bR$. Let $p>3$ be a prime number. Notice that $\bQ_p(\sqrt{-1})$ is a possibly equal unramified extension of $\bQ_p$. Since the $p$-adic order of 3 is zero, $\inv_{\bQ_p}(\beta_{n\lambda})$ is trivial (see the construction of $\inv_{\bQ_p}$).
		
		Notice that $\bQ_3(\sqrt{-1})$ is an unramified quadratic extension of $\bQ_3$. It is easy to show that $\inv_{\bQ_3}(\beta_{n\lambda})=\frac{n}{2}\in\bQ/\bZ$. Therefore $\beta_{n\lambda}$ is trivial in the second Galois cohomology group of $\bQ_3$ if and only if $n$ is even. In particular, $\beta_{n\lambda}$ is trivial only if $n$ is even. The converse follows since $\beta_{n\lambda}=n\beta_\lambda$ is at most order 2 in the second Galois cohomology groups of $\bQ_2$ and $\bR$ (Example \ref{ex:CFT}, \cite{MR4174395} Example 6.13 (b)).
	\end{ex}
	
	\subsection{Reduction techniques}
	
	In the former section, we explained how we could descend equivariant line bundles on partial flag schemes. In this section, we see how to check the descent conditions which appeared in Section 2.1.
	
	\subsubsection{Existence of $w_\sigma$}
	In this section, we discuss how we can determine whether $w_\sigma$ exists and find $w_\sigma$. Let $k\to k'$ be a Galois extension of commutative rings or an infinite Galois extension of fields of Galois group $\Gamma$, $G$ be a reductive group scheme over $k$, and $P'$ be a parabolic subgroup of $G\otimes_k k'$.
	
	\begin{prop}\label{prop:wsigmaautomatic}
		Let $\sigma\in\Gamma$. Suppose that $P'$ contains a Borel subgroup $B'$. Moreover, assume that there exists an element $w_\sigma\in G(k')$ such that ${}^\sigma B'=w_\sigma B' w_\sigma^{-1}$. Then ${}^\sigma P'$ is $G(k')$-conjugate to $P'$, i.e., there exists an element $w'_\sigma\in G(k')$ such that ${}^\sigma P' = w'_\sigma P' (w'_\sigma)^{-1}$ if and only if ${}^\sigma P'=w_\sigma P' w_\sigma^{-1}$.
	\end{prop}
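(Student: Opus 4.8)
The plan is as follows. The implication ``${}^\sigma P' = w_\sigma P' w_\sigma^{-1}$ $\Rightarrow$ ${}^\sigma P'$ is $G(k')$-conjugate to $P'$'' is trivial, with $w_\sigma$ itself as the conjugating element, so I concentrate on the converse. Assume there is $w'_\sigma\in G(k')$ with ${}^\sigma P' = w'_\sigma P' (w'_\sigma)^{-1}$, and set $Q\coloneqq w_\sigma^{-1}\,({}^\sigma P')\,w_\sigma$. From ${}^\sigma B'\subseteq{}^\sigma P'$ and ${}^\sigma B' = w_\sigma B' w_\sigma^{-1}$ we get, after conjugating by $w_\sigma^{-1}$, that $B'\subseteq Q$; moreover $Q$, being a $G(k')$-conjugate of the parabolic subgroup ${}^\sigma P'$ of $G\otimes_k k'$, is again parabolic. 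Thus $Q$ and $P'$ are two parabolic subgroups of $G\otimes_k k'$ containing the common Borel subgroup $B'$, and it suffices to prove $Q = P'$, which is exactly the assertion ${}^\sigma P' = w_\sigma P' w_\sigma^{-1}$.

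Next I would compare the images of $Q$ and $P'$ under the canonical morphism $t\colon \cP_{G\otimes_k k'}\to\type(G\otimes_k k')$. This morphism is invariant for the conjugation action of $G\otimes_k k'$ on $\cP_{G\otimes_k k'}$, so $G(k')$-conjugate parabolic subgroups have equal image in $(\type(G\otimes_k k'))(k')$. Applying this first to the conjugation by $w_\sigma^{-1}$ and then to the relation ${}^\sigma P' = w'_\sigma P'(w'_\sigma)^{-1}$ yields $t(Q) = t({}^\sigma P') = t(P')$. Hence $Q$ and $P'$ have the same type and both contain $B'$.

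The decisive point is then the rigidity statement: \emph{a parabolic subgroup of $G\otimes_k k'$ containing $B'$ is uniquely determined by its image under $t$.} Granting it, $t(Q) = t(P')$ forces $Q = P'$, and we are done. To prove the rigidity I would work \'etale-locally on $\Spec k'$: after a surjective \'etale base change $B'$ acquires a maximal torus $T'$ with $(G, T')$ split, and then the parabolic subgroups containing $B'$ are precisely the standard parabolics $P_I$ indexed by subsets $I$ of the simple system $\Pi$ attached to $(B', T')$; over such a base $\type G$ is the constant scheme associated to the power set of $\Pi$ (cf.\ Theorem \ref{thm:dynkin}) and $t(P_I)$ is the point $I$, so $I$, hence $P_I$, is recovered from $t(P_I)$. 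Therefore $t(P_I) = t(P_J)$ implies $I = J$ and $P_I = P_J$ over the \'etale cover; since $\cP_{G\otimes_k k'}$ is separated over $k'$, the local equality descends and gives $Q = P'$. (Alternatively one may quote \cite{MR0218364}, where parabolic subgroups containing a fixed Borel subgroup are classified by their types.)

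The main obstacle is this rigidity lemma in the generality where $k'$ is only a commutative ring: one must check that the scheme-theoretic type genuinely records the combinatorial datum $I$ after a splitting base change, and that the identification is compatible with \'etale descent. The underlying fact over an algebraically closed field --- that standard parabolics correspond bijectively to subsets of the simple roots and that the type is a conjugacy invariant --- is classical, so the real work lies in the schematic bookkeeping and the reduction to the split case.
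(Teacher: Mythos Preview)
Your proof is correct and follows essentially the same route as the paper: both reduce to the fact that two $G(k')$-conjugate parabolic subgroups containing a common Borel subgroup must coincide. The only difference is that the paper works with ${}^\sigma P'$ and $w_\sigma P' w_\sigma^{-1}$ over ${}^\sigma B'$ (rather than conjugating back to $Q$ and $P'$ over $B'$) and simply cites \cite{MR0218364} Proposition~1.17 for the rigidity step, whereas you sketch its proof via the type map; your parenthetical alternative is exactly what the paper does.
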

	
	\begin{proof}
		The ``if'' direction is clear. Suppose that the above $w'_\sigma$ exists. Since ${}^\sigma B'=w_\sigma B' w_\sigma^{-1}$, both ${}^\sigma P'=w'_\sigma P' (w'_\sigma)^{-1}$ and $w_\sigma P' w_\sigma^{-1}$ contain the Borel subgroup ${}^\sigma B'$. The subgroups ${}^\sigma P'$ and $w_\sigma P' w_\sigma^{-1}$ now coincide from \cite{MR0218364} Proposition 1.17.
	\end{proof}
	
	For this reason, we may choose such $w_\sigma$ and check ${}^\sigma P'=w_\sigma P' w_\sigma^{-1}$ if we are given the above $B'$ and $w_\sigma$.
	
	\begin{ex}\label{ex:quasisplit}
		Suppose that $P'$ contains a Borel subgroup which is defined over $k$. Then we may put $w_\sigma=e$. In particular, ${}^\sigma P'$ is $G(k')$-conjugate to $P'$ if and only if $P'$ is defined over $k$.
		In this case, $\beta_\lambda$ is trivial by definitions.
	\end{ex}
	
	\begin{ex}[\cite{MR546588} 1.10]
		Let $F$ be a nonarchimedian local field, and $\cO_F$ be the ring of its integers. Then every reductive group scheme $G$ over $\cO_F$ is quasi-split. In fact, let $\kappa_F$ be the residue field of $\cO_F$. Lang's theorem implies that its reduction to the finite field $\kappa_F$ is quasi-split. Apply Hensel's lemma (\cite{MR0238860} Th\'eor\`eme 18.5.17) to the flag scheme of $G$ to deduce the assertion.
	\end{ex}
	
	\begin{ex}
		Let $G_q$ and $\lambda$ be as in Example \ref{ex:counterexample}. Let $p>3$ be a prime number. Since $3\in\bZ_p^\times$, $G_q$ naturally admits the structure of a reductive group scheme over $\bZ_p$ (see Example \ref{ex:symmpair} if necessary). This gives another proof of the fact that $\beta_\lambda$ is trivial in the second Galois cohomology group of $\bQ_p$ with $p>3$.
	\end{ex}
	
	\begin{ex}\label{ex:settingsection4}
		Consider the setting of Section 2.1. Namely, assume that $\Spec k'$ is connected, and that there is a maximal torus $H$ of $G$ such that $(G\otimes_k k',H\otimes_k k')$ is split. Fix a simple system $\Pi$ of $(G\otimes_k k',H\otimes_k k')$. For each $\sigma\in\Gamma$, choose $w_\sigma\in N_{G(k')}(H\otimes_k k')$ such that ${}^\sigma\Pi=w_\sigma \Pi$.
		
		Let $B'$ be the Borel subgroup of $G\otimes_k k'$ attached to $(H\otimes_k k',\Pi)$, and $P'\subset G\otimes_k k'$ be a parabolic subgroup containing $B'$. Then $P'$ is attached to a subset $\Pi'\subset \Pi$ since $\Spec k'$ is connected. Fix $\sigma\in\Gamma$. Then the following conditions are equivalent:
		\begin{enumerate}
			\renewcommand{\labelenumi}{(\alph{enumi})}
			\item ${}^\sigma P'$ is $G(k')$-conjugate to $P'$;
			\item ${}^\sigma P'=w_\sigma P' w_\sigma^{-1}$;
			\item ${}^\sigma \Pi'=w_\sigma \Pi'$.
		\end{enumerate}
		In particular, finding parabolic subgroups $P'$ containing $B'$ such that ${}^\sigma P'$ is $G(k')$-conjugate to $P'$ for all $\sigma\in\Gamma$ is equivalent to finding elements of $(\type G)(k)$ in this setting. We also remark that we can obtain all parabolic types and partial flag schemes over $k$ from such $P'$.
	\end{ex}
	
	\subsubsection{Construct $\lambda\in X^\ast(P')$ with ${}^\sigma \lambda=w_\sigma\lambda$}
	In this section, we remark how we can find $\lambda\in X^\ast(P')$ and how we can check ${}^\sigma\lambda=w_\sigma\lambda$.
	
	\begin{lem}\label{lem:charofaddgrp}
		Let $k$ be a reduced commutative ring. Then a character of $\bG_a$ over $k$ is trivial.
	\end{lem}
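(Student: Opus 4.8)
The plan is to argue via the functor of points. A character of $\bG_a$ over $k$ is, by definition, a homomorphism of group $k$-schemes $\chi\colon\bG_a\to\bG_m$, i.e.\ a natural family of group homomorphisms $\chi_A\colon (A,+)\to(A^\times,\times)$ indexed by commutative $k$-algebras $A$. By the (restricted co-)Yoneda lemma, $\chi$ is determined by the image $u\coloneqq\chi_{k[t]}(t)$ of the tautological point $t\in\bG_a(k[t])$, and $u$ is necessarily a unit of the coordinate ring $\cO(\bG_a)=k[t]$ since $\chi_{k[t]}$ lands in $(k[t])^\times$.

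First I would invoke the elementary fact that $(k[t])^\times=k^\times$ when $k$ is reduced: a polynomial over any commutative ring is a unit precisely when its constant term is a unit and all its higher coefficients are nilpotent, so reducedness forces $u$ to be a constant $c\in k^\times$. Next I would pin down $c$ by naturality applied to the $k$-algebra homomorphism $k[t]\to k$, $t\mapsto 0$: it carries the tautological point to $0\in\bG_a(k)$, whence $c=\chi_k(0)$; but $\chi_k$ is a group homomorphism into $(k^\times,\times)$, so $\chi_k(0)=1$, i.e.\ $u=1$. Finally, for an arbitrary $k$-algebra $A$ and any $a\in A$, naturality along $k[t]\to A$, $t\mapsto a$, gives $\chi_A(a)=1$, so $\chi$ is the trivial character.

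The only non-formal ingredient is the computation of $(k[t])^\times$ for reduced $k$; everything else is a mechanical unwinding of naturality, so I expect no genuine obstacle here. For the record, the argument can equally be phrased in Hopf-algebra language: a character corresponds to a group-like unit $f\in k[t]$, reducedness forces $f$ to be a constant in $k^\times$, and the counit (augmentation) axiom $f(0)=1$ then yields $f=1$.
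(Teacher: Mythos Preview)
Your proof is correct and takes a genuinely different route from the paper's. The paper never invokes that the corresponding element of $k[t]$ is a unit; instead it writes $f=\sum_i a_i t^i$ and uses the \emph{group-law identity} $f(x+y)=f(x)f(y)$ to extract the relations $a_0=1$ and $a_ia_j=\binom{i+j}{i}a_{i+j}$, then shows inductively that $\frac{(mn)!}{(n!)^m}a_{mn}=a_n^m$; if some $a_n$ with $n\geq 1$ were nonzero, reducedness would force $a_{mn}\neq 0$ for all $m$, contradicting that $f$ is a polynomial. Your argument, by contrast, uses only that $\chi$ is a scheme morphism to $\bG_m$ (so the Yoneda element is a unit of $k[t]$), the commutative-algebra fact $(k[t])^\times=k^\times$ for reduced $k$, and the identity-to-identity condition; you never touch the comultiplication. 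Your approach is shorter and more conceptual, while the paper's computation has the side benefit of making visible exactly which coefficient identities force triviality and would adapt if one wanted to analyze characters of $\bG_a$ over non-reduced bases (where the higher $a_i$ can be nonzero nilpotents and your unit argument alone does not finish).
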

	\begin{proof}
		Suppose that we are given a character $\chi:\bG_a\to \bG_m$ over $k$. Let $f=\sum_i a_i x^i\in k\left[t\right]$ be the corresponding polynomial. Since $\chi$ is a group homomorphism, we have
		\[a_0=1\]
		\[\begin{array}{ll}
			a_ia_j=\binom{i+j}{i}a_{i+j}&(i,j\geq 0).
		\end{array}\]
		
		The assertion follows if $a_n=0$ for $n\geq 1$; Otherwise, choose a positive integer $n$ such that $a_n\neq 0$. One can easily show by induction that $\frac{(mn)!}{(n!)^m}a_{mn}=a_n^m$ for $m\geq 1$. Since $k$ is reduced, $a_{mn}$ are nonzero. On the other hand, $a_{mn}=0$ for sufficiently large $m$ since $f$ is a polynomial and $n\geq 1$, a contradiction.
	\end{proof}
	\begin{prop}
		Let $G$ be a reductive group scheme over a scheme $S$, $H\subset G$ be a maximal torus, $Q\subset G$ be a parabolic subgroup containing $H$, and $L$ be the Levi subgroup of $Q$ with respect to $H$ (\cite{MR0218363} Lemme 5.11.3, Proposition 5.11.4, \cite{MR0218364} 1.7).
		\begin{enumerate}
			\renewcommand{\labelenumi}{(\arabic{enumi})}
			\item The restriction map $X^\ast(Q)\to X^\ast(L)$ is an isomorphism if $S$ is reduced.
			\item The restriction map $X^\ast(L)\to X^\ast(H)$ is injective.
		\end{enumerate}
		In particular, a character of $Q$ is determined by its restriction to $H$ if $S$ is reduced. 
	\end{prop}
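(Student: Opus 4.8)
The plan is to prove (1) and (2) separately, with (2) being the easier half and (1) resting on a cohomological vanishing input together with the structure of parabolic subgroups.

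For part (2), I would argue that the restriction map $X^\ast(L) \to X^\ast(H)$ is injective because $H$ is a maximal torus of the reductive group scheme $L$ (indeed $L = Z_G(\cdots)$ is reductive with the same maximal torus $H$), so it suffices to know that a character of a reductive group scheme is determined by its restriction to a maximal torus. Working étale-locally on $S$, we may assume $(L,H)$ split; then a character of $L$ is trivial on the derived subgroup $\sD(L)$ (which is semisimple, hence has no nontrivial characters), so it factors through the maximal central torus, and the composite with $H \hookrightarrow L$ recovers enough of the character by the root datum combinatorics — concretely, a character of $H$ extends to $L$ only in one way, and two characters of $L$ agreeing on $H$ must coincide. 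The cleanest formulation: the kernel of $L \to \bG_m^{\,r}$ induced by any finite family of characters contains $\sD(L)$ and a subtorus of $H$ whose cocharacters pair trivially with all the characters, and this forces injectivity on $X^\ast$. I would cite \cite{MR0218363} for $\sD(L)$ being semisimple and for the structure of $L$.

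For part (1), I want the restriction $X^\ast(Q) \to X^\ast(L)$ to be an isomorphism when $S$ is reduced. Injectivity is the interesting direction; surjectivity is handled by the semidirect product decomposition $Q = L \ltimes R_u(Q)$ (the Levi decomposition, available since $H \subset Q$, via \cite{MR0218363} Proposition 5.11.4): a character of $L$ extends to $Q$ by declaring it trivial on the unipotent radical $R_u(Q)$, and this is a homomorphism because $L$ normalizes $R_u(Q)$. For injectivity, a character $\chi$ of $Q$ that is trivial on $L$ must be trivial on $R_u(Q)$ as well, hence trivial on all of $Q$; the point is to show $\chi|_{R_u(Q)}$ is trivial. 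Since $R_u(Q)$ is a successive extension of vector group schemes $\bG_a$ (working étale-locally, using the filtration of the unipotent radical by root subgroups, \cite{MR3362641}), and $S$ is reduced, Lemma \ref{lem:charofaddgrp} tells us every character of $\bG_a$ over a reduced ring is trivial; an induction up the filtration then gives $\chi|_{R_u(Q)} = e$. One must be a little careful that the successive quotients really are (forms of) $\bG_a^{\,n}$ and that reducedness is preserved under the relevant base changes — but étale morphisms preserve reducedness, so this is fine.

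The main obstacle I anticipate is organizing the induction in part (1) cleanly over a general reduced base $S$ rather than over a field: one needs the filtration of $R_u(Q)$ with successive quotients additive group schemes to descend appropriately, and one needs to know that "a character is trivial" is a condition that can be checked étale-locally (which it is, since $\bG_m$ is affine and the characters form a sheaf). The reducedness hypothesis enters exactly once, through Lemma \ref{lem:charofaddgrp}, and without it the statement genuinely fails (nilpotents in $\cO_S$ produce nontrivial characters of $\bG_a$), so I would emphasize that point. The final sentence of the proposition is then immediate: combining (1) and (2), the composite $X^\ast(Q) \to X^\ast(L) \to X^\ast(H)$ is injective when $S$ is reduced, so a character of $Q$ is determined by its restriction to $H$.
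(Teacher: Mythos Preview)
Your proposal is correct and follows essentially the same route as the paper: reduce \'etale-locally to the split situation, then for (1) use the Levi decomposition $Q = R_u(Q)\rtimes L$ together with Lemma~\ref{lem:charofaddgrp} on characters of $\bG_a$ over a reduced base (the paper packages this via \cite{MR0218364} Section~2.1.1), and for (2) use that $(L,H)$ is split reductive so that characters of $L$ are detected on $H$ (the paper cites \cite{MR0218363} Proposition~6.1.8 for this). The only organizational difference is that the paper performs the \'etale-local reduction once at the outset for both parts simultaneously, whereas you do it separately; your remark that \'etale base change preserves reducedness is a point the paper leaves implicit.
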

	\begin{proof}
		Observe that the assertions are local in the \'etale topology of $S$. In view of \cite{MR0218364} Lemme 1.14, we may and do assume that the following conditions are satisfied:
		\begin{enumerate}
			\renewcommand{\labelenumi}{(\roman{enumi})}
			\item $S$ is nonempty affine;
			\item $(G,H)$ is split;
			\item $Q$ is attached to a parabolic subset $\Phi$ of $\Delta(G,H)$.
		\end{enumerate}
		Part (1) then follows from \cite{MR0218364} Section 2.1.1 and Lemma \ref{lem:charofaddgrp}. For (2), notice that $(L,H)$ is split reductive by definitions. Part (2) now follows from \cite{MR0218363} Proposition 6.1.8.
	\end{proof}
	\begin{ex}\label{ex:charofparabolics}
		Let $(G,H)$ be a split reductive group scheme over a connected scheme, and $Q$ be the parabolic subgroup of $G$ attached to a parabolic subset $\Phi$ of $\Delta(G,H)$. Then the restriction to $H$ determines an isomorphism
		\[X^\ast(Q)\cong
		\{\lambda\in X^\ast(H):~\langle\alpha^\vee,\lambda\rangle=0\ 
		{\rm for\ all\ }\alpha\in\Phi\cap (-\Phi)\}\]
		by \cite{MR0218363} Proposition 6.1.8, 6.1.1, and \cite{MR0212024} Th\'eor\`eme 3.1. Moreover, suppose that $\Phi$ contains a simple system $\Pi$ of $\Delta(G,H)$. Then for a character $\lambda\in X^\ast(H)$, $\langle\alpha^\vee,\lambda\rangle=0$ for all $\alpha\in\Phi\cap (-\Phi)$ if and only if $\langle\alpha^\vee,\lambda\rangle=0$ for all $\alpha\in\Phi\cap (-\Phi)\cap\Pi$.
	\end{ex}
	\begin{cor}
		Let $k\to k'$ be a Galois extension of reduced commutative rings or an infinite Galois extension of fields with Galois group $\Gamma$, $G$ be a reductive group scheme over $k$, $H'$ be a maximal torus of $G\otimes_k k'$, $P'$ be a parabolic subgroup of $G\otimes_k k'$ containing $H'$, and $\sigma\in\Gamma$. Suppose that we are given an element $w\in G(k')$ such that ${}^\sigma P'=w P' w^{-1}$ and ${}^\sigma H'=w H' w^{-1}$. Consider a character $\lambda\in X^\ast(H')$ lying in the image of the restriction map $X^\ast(P')\hookrightarrow X^\ast(H')$. Write $\lambda^{P'}$ be the preimage of $\lambda$ in $X^\ast(P')$. Then ${}^\sigma \lambda^{P'}=w\lambda^{P'}$ if and only if ${}^\sigma\lambda=w\lambda$.
	\end{cor}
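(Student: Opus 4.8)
The plan is to deduce this from the injectivity of the restriction map $X^\ast(P')\hookrightarrow X^\ast(H')$ supplied by the Proposition above (its last sentence applies since $\Spec k'$ is reduced: a field is reduced, and $k'$ is assumed reduced in the other case), used twice. The first point to note is that $\sigma$ fixes $k$, so base change by $\sigma$ identifies ${}^\sigma(G\otimes_k k')$ with $G\otimes_k k'$, carries the maximal torus $H'$ to the maximal torus ${}^\sigma H'$, and carries the parabolic subgroup $P'$ to the parabolic subgroup ${}^\sigma P'$, which still contains ${}^\sigma H'$; moreover it preserves reducedness of the base. Equivalently, since ${}^\sigma(-)$ is an equivalence of categories, it sends the injective restriction map $X^\ast(P')\to X^\ast(H')$ to an injective map $X^\ast({}^\sigma P')\to X^\ast({}^\sigma H')$. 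By hypothesis ${}^\sigma P'=wP'w^{-1}$ and ${}^\sigma H'=wH'w^{-1}$, so ${}^\sigma\lambda^{P'}$ and $w\lambda^{P'}$ are characters of the \emph{same} group ${}^\sigma P'$.

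Next I would record two routine compatibilities of the restriction to a maximal torus. Being a base-change functor, the Galois twist commutes with restriction along inclusions of $k'$-group schemes, whence $({}^\sigma\lambda^{P'})|_{{}^\sigma H'}={}^\sigma(\lambda^{P'}|_{H'})={}^\sigma\lambda$. Likewise, conjugation by $w$ carries the inclusion $H'\subset P'$ to $wH'w^{-1}\subset wP'w^{-1}$, so the definition $(w\mu)(t)=\mu(w^{-1}tw)$ gives $(w\lambda^{P'})|_{wH'w^{-1}}=w(\lambda^{P'}|_{H'})=w\lambda$. Thus the restrictions of ${}^\sigma\lambda^{P'}$ and of $w\lambda^{P'}$ to ${}^\sigma H'=wH'w^{-1}$ are exactly ${}^\sigma\lambda$ and $w\lambda$.

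The conclusion is then immediate: if ${}^\sigma\lambda^{P'}=w\lambda^{P'}$, restrict to ${}^\sigma H'$ to get ${}^\sigma\lambda=w\lambda$; conversely, if ${}^\sigma\lambda=w\lambda$, then ${}^\sigma\lambda^{P'}$ and $w\lambda^{P'}$ are two characters of ${}^\sigma P'$ agreeing on the maximal torus ${}^\sigma H'$, hence equal by the injectivity of $X^\ast({}^\sigma P')\to X^\ast({}^\sigma H')$ noted above. The only step that is not purely formal is the verification that the Proposition really does apply to the $\sigma$-twisted triple --- that the twist of a maximal torus, resp.\ a parabolic subgroup, resp.\ a reduced base, is again one of the same kind --- and I expect that (minor) bookkeeping to be where care is needed; the interplay of restriction with the Galois twist and with conjugation by $w$ is then purely formal.
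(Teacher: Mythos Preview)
Your proof is correct and follows essentially the same approach as the paper's own proof, which simply records the two restriction compatibilities $(w\lambda^{P'})|_{wH'w^{-1}}=w\lambda$ and $({}^\sigma \lambda^{P'})|_{{}^\sigma H'}={}^\sigma \lambda$ and declares the result immediate. You have spelled out more carefully why the injectivity of the restriction map survives the Galois twist, which the paper leaves implicit.
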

	\begin{proof}
		This is immediate from the equalities
		\[\begin{array}{cc}
			(w\lambda^{P'})|_{wH'w^{-1}}=w\lambda,&
			({}^\sigma \lambda^{P'})|_{{}^\sigma H'}={}^\sigma \lambda.
		\end{array}\]
	\end{proof}
	\subsection{Application to irreducible representations of reductive algebraic groups}
	Let $G$ be a connected reductive algebraic group over a field $F$ of characteristic zero, and $H$ be its maximal torus. Choose a (possibly infinite) Galois extension $F'/F$ such that $H\otimes_F F'$ is split. In this section, we discuss existence and a geometric realization of $F$-forms of (absolutely) irreducible representations of $G\otimes_F F'$. We also study the classification of irreducible representations of $G$ when $F'$ is a quadratic extension.
	
	Fix a simple system $\Pi$ for $(G\otimes_F F',H\otimes_F F')$. Write $B'\subset G\otimes_F F'$ for the Borel subgroup attached to $(H\otimes_F F',\Pi)$. Let $V'$ be an (absolutely) irreducible representation of $G\otimes_F F'$ with lowest weight $\lambda$. For $\sigma\in\Gamma$, choose $w_\sigma\in N_{G(F')}(H\otimes_F F')$ such that ${}^\sigma\Pi=w_\sigma\Pi$. If $V'$ admits a descent datum, ${}^\sigma \lambda=w_\sigma \lambda$ holds for all $\sigma\in\Gamma$ by comparing the lowest weights of $V'$ and ${}^\sigma V'$. Let us assume this equality. Fix an isomorphism $\psi_\sigma:{}^\sigma V'\cong V'$. Then Borel-Tits' obstruction class $\beta_{V'}\in H^2(\Gamma,(F')^\times)$ for $F$-rationality of $V'$ is characterized by
	\[\psi_\sigma \circ {}^\sigma \psi_\tau
	\circ \psi_{\sigma\tau}^{-1}
	=\beta_{V'}(\sigma,\tau)\id_{V'}\]
	(see \cite{MR207712} Section 12 for details). Identify $V'$ with $\Gamma((G\otimes_F F')/B',\cL_\lambda)$. Put $\psi_\sigma=\Gamma((G\otimes_F F')/B',\Psi_\sigma)$ to get $\beta_{V'}=\beta_\lambda$.
	\begin{thm}\label{thm:descentofrep}
		An irreducible representation $V'$ of $G\otimes_F F'$ with lowest weight $\lambda$ is defined over $F$ if and only if the following conditions are satisfied:
		\begin{enumerate}
			\renewcommand{\labelenumi}{(\roman{enumi})}
			\item ${}^\sigma \lambda=w_\sigma \lambda$ for all $\sigma\in\Gamma$.
			\item The cocycle $\beta_\lambda$ is trivial in the Galois cohomology $H^2(\Gamma,(F')^\times)$.
		\end{enumerate}
		Moreover, if these equivalent conditions are satisfied, the $F$-form of $V'$ is realized as the space of global sections of the $F$-form of $\cL_{\lambda}$. 
	\end{thm}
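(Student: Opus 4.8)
The plan is to synthesise the statement from the Borel--Tits criterion (\cite{MR207712} Section~12), the lowest-weight bookkeeping recorded just before the statement, and Theorem~\ref{thm:descentofllambda}. First I would recall the criterion: an absolutely irreducible representation $V'$ of $G\otimes_F F'$ is defined over $F$ if and only if ${}^\sigma V'\cong V'$ as $(G\otimes_F F')$-representations for every $\sigma\in\Gamma$ and the obstruction class $\beta_{V'}\in H^2(\Gamma,(F')^\times)$ --- which is defined exactly when the former condition holds --- vanishes. Concretely, the former condition lets one pick isomorphisms $\psi_\sigma\colon{}^\sigma V'\cong V'$; by Schur's lemma over $F'$ the failure of the cocycle condition is a scalar $2$-cocycle, which is $\beta_{V'}$, and rescaling the $\psi_\sigma$ by units produces a semilinear $\Gamma$-action --- hence an $F$-form by faithfully flat descent (\cite{MR4225278} Theorem~14.87) --- precisely when $\beta_{V'}$ is a coboundary; conversely the tautological semilinear action on $V\otimes_F F'$ exhibits such a datum with trivial $\beta_{V'}$. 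For infinite $\Gamma$ one first descends everything to a finite Galois subextension over which $H$, $\lambda$, the chosen $w_\sigma$ and the descent datum are all defined, exactly as in the hypotheses of Theorem~\ref{thm:descentofllambda}.

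Second, I would identify the two conditions with (i) and (ii). Comparing $B'$-lowest weights and using ${}^\sigma B'=w_\sigma B'w_\sigma^{-1}$, the representation ${}^\sigma V'$ has $B'$-lowest weight $w_\sigma^{-1}({}^\sigma\lambda)$, so ${}^\sigma V'\cong V'$ for all $\sigma$ if and only if (i) holds; in particular (i) is necessary for $V'$ to be defined over $F$, so if (i) fails there is nothing to prove. Assuming (i), the geometric realization set up immediately before the statement applies: identifying $V'$ with $\Gamma((G\otimes_F F')/B',\cL_\lambda)$ and taking $\psi_\sigma=\Gamma((G\otimes_F F')/B',\Psi_\sigma)$, applying $\Gamma((G\otimes_F F')/B',-)$ to $\Psi_\sigma\circ{}^\sigma\Psi_\tau\circ\Psi_{\sigma\tau}^{-1}=\beta_\lambda(\sigma,\tau)\,\id_{\cL_\lambda}$ yields $\psi_\sigma\circ{}^\sigma\psi_\tau\circ\psi_{\sigma\tau}^{-1}=\beta_\lambda(\sigma,\tau)\,\id_{V'}$, so $\beta_{V'}=\beta_\lambda$. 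Hence $\beta_{V'}$ is trivial if and only if (ii) holds, and the asserted equivalence follows from the criterion of the first paragraph.

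Third, for the geometric realization of the $F$-form, assume (i) and (ii). By Theorem~\ref{thm:descentofllambda} the bundle $\cL_\lambda$ carries a datum of Galois descent, giving a $G$-equivariant line bundle $\cL$ on the $F$-form $\cP_{G,x}$ of $(G\otimes_F F')/B'$ with $\cL\otimes_F F'\cong\cL_\lambda$ compatibly with the $G$-action and with the descent data. As $\cP_{G,x}\to\Spec F$ is projective and $F\to F'$ is flat, flat base change for quasi-coherent cohomology gives a $(G\otimes_F F')$-equivariant isomorphism $\Gamma(\cP_{G,x},\cL)\otimes_F F'\cong\Gamma((G\otimes_F F')/B',\cL_\lambda)\cong V'$ intertwining the tautological descent datum on the left with the one on $V'$ above; thus $\Gamma(\cP_{G,x},\cL)$ is an $F$-form of $V'$ as a representation of $G$, which is the final claim. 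Since the theorem is essentially this assembly, there is no genuinely hard step; the one point requiring care is keeping track of the semilinear Galois actions through the global-sections functor (and, when $\Gamma$ is infinite, their continuity), which is formal once one notes that $\Gamma(\cP_{G,x},-)$ is $F'$-linear after base change and sends $\Psi_\sigma$ to $\psi_\sigma$.
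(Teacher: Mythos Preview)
Your proposal is correct and follows essentially the same approach as the paper: the paper's argument (given in the paragraph immediately preceding the theorem) also compares lowest weights to obtain condition~(i), then identifies $V'$ with $\Gamma((G\otimes_F F')/B',\cL_\lambda)$ and sets $\psi_\sigma=\Gamma((G\otimes_F F')/B',\Psi_\sigma)$ to deduce $\beta_{V'}=\beta_\lambda$, concluding via the Borel--Tits criterion. Your write-up is somewhat more explicit about the geometric realization step (flat base change) and the infinite-$\Gamma$ reduction, but the strategy is the same.
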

	\begin{cor}[\cite{MR0209401} Section 3 Theorem 5, \cite{MR3770183} Remark 6.4]
		Suppose that $G$ admits a Borel subgroup $B$ over $F$. Choose a maximal torus $H\subset B$ and a Galois extension $F'/F$ such that $H\otimes_F F'$ is split. Let $V'$ be an irreducible representation $V'$ of $G\otimes_F F'$ with lowest weight $\lambda$.
		\begin{enumerate}
			\renewcommand{\labelenumi}{(\arabic{enumi})}
			\item The following conditions are equivalent:
			\begin{enumerate}
				\item[(a)] $V'$ admits an $F$-form;
				\item[(b)] ${}^\sigma V'\cong V'$ for all $\sigma\in\Gamma$;
				\item[(c)] $\lambda$ is defined over $F$;
				\item[(d)] ${}^\sigma \lambda=\lambda$ for all $\sigma\in\Gamma$;
			\end{enumerate}
			\item Suppose that the equivalent conditions of (1) are satisfied. Let $\lambda_F$ be the $F$-form of $\lambda$. Then $\Gamma(G/B,G\times^B \lambda_F)$ exhibits an $F$-form of $V'$.
		\end{enumerate}
	\end{cor}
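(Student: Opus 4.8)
The plan is to reduce everything to Theorem~\ref{thm:descentofrep} together with Example~\ref{ex:quasisplit}: the hypothesis that $G$ carries a Borel subgroup $B$ over $F$ is precisely the quasi-split situation in which all the elements $w_\sigma$ can be taken trivial. Concretely, I would take $B'=B\otimes_F F'$, which is stable under every Galois twist because $B$ is defined over $F$; since $H$ is also defined over $F$, the simple system $\Pi$ attached to $(H\otimes_F F',B')$ satisfies ${}^\sigma\Pi=\Pi$ for all $\sigma\in\Gamma$, so one may choose $w_\sigma=e$ in the set-up of Section~2.4. With this choice, condition (i) of Theorem~\ref{thm:descentofrep} becomes ${}^\sigma\lambda=\lambda$, i.e.\ condition (d), while $\beta_\lambda(\sigma,\tau)=\lambda(w_{\sigma\tau}^{-1}\sigma(w_\tau)w_\sigma)=\lambda(e)=1$ is automatically trivial, so condition (ii) is vacuous. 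Hence Theorem~\ref{thm:descentofrep} gives (a) $\Leftrightarrow$ (d) at once, and its ``moreover'' clause realizes the $F$-form of $V'$ as the global sections of the $F$-form of $\cL_\lambda$.

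To complete the cycle of equivalences I would argue as follows. (a) $\Rightarrow$ (b) is immediate, since base change from $F$ commutes with Galois twists. (b) $\Rightarrow$ (d): the lowest weight of ${}^\sigma V'$ with respect to the Galois-stable Borel $B'$ is ${}^\sigma\lambda$, and isomorphic representations share the same lowest weight, so ${}^\sigma\lambda=\lambda$. (c) $\Leftrightarrow$ (d) is the standard Galois-descent identity $X^\ast(H)=X^\ast(H\otimes_F F')^\Gamma$ for the torus $H$, which splits over $F'$. For part (2): since $w_\sigma=e$, the descent datum $(\Psi_\sigma)$ on $\cL_\lambda$ is the tautological one, so the $F$-form of $\cL_\lambda$ is $G\times^B\lambda_F$, where $\lambda_F\in X^\ast(B)$ is the $F$-form of $\lambda$ --- which exists as a character of $B$, and not merely of $H$, because a character of a parabolic subgroup over a field is recovered from its restriction to the maximal torus (the structural fact used in Section~2.3.2). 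Feeding this into the ``moreover'' clause of Theorem~\ref{thm:descentofrep} gives the asserted realization; alternatively one may check directly that flat base change along $F\to F'$ carries $\Gamma(G/B,G\times^B\lambda_F)$ to $\Gamma((G\otimes_F F')/B',\cL_\lambda)\cong V'$, compatibly with the $G$-action.

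I do not expect a genuine obstacle here, as this is a specialization of an already-established theorem. The only two points needing a little care are (i) verifying that $\Pi$ is literally Galois-fixed rather than merely fixed modulo the Weyl group --- this is exactly where ``$B$ defined over $F$'' is used in place of the weaker ``$G$ quasi-split'' --- and (ii) upgrading the $F$-rationality of $\lambda$ from a character of $H$ to a character of $B$, for which the cited description of the character group of a parabolic in terms of its Levi suffices.
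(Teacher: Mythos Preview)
Your proposal is correct and follows essentially the same route as the paper: set $w_\sigma=e$ using that $B$ is defined over $F$, observe that $\beta_\lambda$ is then the trivial cocycle so Theorem~\ref{thm:descentofrep} reduces to (a)$\Leftrightarrow$(d), and close the loop via (a)$\Rightarrow$(b)$\Rightarrow$(d) by comparison of lowest weights together with (c)$\Leftrightarrow$(d) by Galois descent of characters. Your treatment of part~(2) --- identifying the $F$-form of $\cL_\lambda$ with $G\times^B\lambda_F$ and invoking flat base change --- matches the paper's one-line citation of \cite{MR217085} Proposition~(1.4.15), with the added care you take in lifting $\lambda_F$ from $H$ to $B$ being a helpful elaboration.
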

	\begin{proof}
		Part (2) follows from \cite{MR217085} Proposition (1.4.15). We prove (1). It is clear that (a) implies (b). The implication (b) $\Rightarrow$ (d) follows by comparing the lowest weights. The conditions (b) and (c) are equivalent by Galois descent. To prove (d) $\Rightarrow$ (a), we may and do put $w_\sigma=e$ for all $\sigma\in\Gamma$ since $B$ is defined over $F$. In view of our choice of $w_\sigma$, $V'$ satisfies the conditions (i) and (ii) in Theorem \ref{thm:descentofrep}. In fact, $\beta_{\lambda}$ is the trivial cocycle. The assertion now follows from Theorem \ref{thm:descentofrep}.
	\end{proof}
	
	For further applications, let us concentrate on the case when $F'$ is a quadratic extension of $F$. We discuss the classification problem of irreducible representations of $G$ in the spirit of Cartan in \cite{E1914}: Let $\Irr G$ (resp.\ $\Irr (G\otimes_F F')$) denote the set of isomorphism classes of irreducible representations of $G$ (resp.\ $G\otimes_F F'$). Set
	\[\Irr_a G=\{V\in\Irr G:~V\otimes_F F'\ \mathrm{is\ irreducible}\}.\]
	Define subsets $\Irr_i(G\otimes_F F')\subset \Irr (G\otimes_F F')$ for $i\in\{0,1,-1\}$ by
	\[\Irr_1(G\otimes_F F')
	=\{V'\in \Irr (G\otimes_F F'):~V'\ \textrm{admits\ an}\ F\textrm{-form}\}\]
	\[\Irr_0(G\otimes_F F')
	=\{V'\in \Irr (G\otimes_F F'):~\bar{V}'\not\cong V'\}\]
	\[\Irr_{-1}(G\otimes_F F')
	=\{V'\in \Irr (G\otimes_F F'):~\Res_{F'/F}V'\ \textrm{is\ irreducible},~\bar{V}'\cong V'\},\]
	where $\Res_{F'/F} V'$ denotes the restriction of scalar (see \cite{MR3853058} Proposition 3.1.1). Define an equivalence relation on $\Irr_0(G\otimes_F F')$ by $\bar{V}\sim V$ for $V\in \Irr_0(G\otimes_F F')$. Then we have bijections
	\[\Irr_a G\cong\Irr_1(G\otimes_F F')\]
	\[\Irr G\cong\Irr_1(G\otimes_F F')\coprod \Irr_0(G\otimes_F F')/\sim\coprod \Irr_{-1}(G\otimes_F F').\]
	In fact, observe that every irreducible representation of $G$ is embedded to an irreducible representation of $G\otimes_F F'$ by induction. Since $G\otimes_F F'$ is split, $\Res_{F'/F} V'$ is reducible if and only $V'$ is defined over $F$ (see \cite{MR3770183} Proposition 5.1). Moreover, the $F$-form is unique up to isomorphisms (\cite{MR3770183} Corollary 3.4, Proposition 3.5). The assertion now follows from \cite{MR3770183} Proposition 5.1 and the proof of Proposition 5.2.
	
	For a numerical description of the three subsets $\Irr_i(G\otimes_F F')$ ($i\in\{0,\pm 1\}$), write $X^\ast(H\otimes_F F')_-$ for the set of anti-dominant characters of $H\otimes_F F'$. Let $w$ be an element of $N_{G(F')}(H\otimes_F F')$ such that $\bar{\Pi}=w\Pi$.
	\begin{cor}
		\begin{enumerate}
			\renewcommand{\labelenumi}{(\arabic{enumi})}
			\item The correspondence $\Irr (G\otimes_F F')\cong X^\ast(H\otimes_F F')_-$ obtained by taking the lowest weights restricts to the following bijections:
			\[\Irr_1 (G\otimes_F F')\cong\{\lambda\in X^\ast(H\otimes_F F')_-:~\bar{\lambda}= w\lambda,~\exists c\in F'\mathrm{\ such\ that\ } \lambda(\bar{w}w)=\bar{c}c\}\]
			\[\Irr_0 (G\otimes_F F')\cong\{\lambda\in X^\ast(H\otimes_F F')_-:~\bar{\lambda}\neq w\lambda\}\]
			\[\Irr_{-1} (G\otimes_F F')\cong\{\lambda\in X^\ast(H\otimes_F F')_-:~\bar{\lambda}= w\lambda,~\forall c\in F,~\lambda(\bar{w}w)\neq\bar{c}c\}.\]
			\item The equivalence relation on the set $\{\lambda\in X^\ast(H\otimes_F F')_-:~\bar{\lambda}\neq w\lambda\}$ induced from that on $\Irr_0 (G\otimes_F F')$ via the bijection of (1) is generated by $\lambda\sim \overline{w\lambda}$.
			\item If we are given a character $\lambda\in X^\ast(H\otimes_F F')_-$ and $c\in F'$ satisfying $\bar{\lambda}= w\lambda$ and $\lambda(\bar{w}w)=\bar{c}c$, the corresponding absolutely irreducible representation of $G$ is realized as the space of global sections of the $F$-form of $\cL_\lambda$.
		\end{enumerate}
	\end{cor}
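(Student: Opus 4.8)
The plan is to deduce this Corollary as a direct translation of the preceding results, with no new substance required—everything follows by combining Theorem~\ref{thm:descentofrep}, Corollary~\ref{cor:quadcase}, and the bijections $\Irr_a G\cong\Irr_1(G\otimes_F F')$ and $\Irr G\cong\Irr_1\coprod\Irr_0/\!\sim\,\coprod\Irr_{-1}$ displayed just before the statement. The underlying dictionary is the standard bijection $\Irr(G\otimes_F F')\cong X^\ast(H\otimes_F F')_-$ sending an absolutely irreducible representation to its lowest weight, which is $\Gamma$-equivariant in the sense that the lowest weight of ${}^\sigma V'$ is ${}^\sigma\lambda$ and that of the representation with lowest weight $\mu$ conjugated by $w$ is $w\mu$; in particular $\bar V'\cong V'$ if and only if $\bar\lambda=w\lambda$, since $\bar V'$ has lowest weight $\bar\lambda$ while $V'$ realized with respect to the simple system $\bar\Pi=w\Pi$ has lowest weight $w\lambda$.

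For part~(1): the description of $\Irr_1(G\otimes_F F')$ is exactly Theorem~\ref{thm:descentofrep} with $\Gamma=\bZ/2\bZ$, rephrased via Corollary~\ref{cor:quadcase}—condition (i) there is $\bar\lambda=w\lambda$, and triviality of $\beta_\lambda\in H^2(\bZ/2\bZ,(F')^\times)$ is, by the standard identification of $H^2(\bZ/2\bZ,(F')^\times)$ with $F^\times/N_{F'/F}((F')^\times)$, precisely the existence of $c\in F'$ with $\lambda(\bar w w)=\bar c c$ (note $\lambda(\bar w w)\in F^\times$ automatically, as in Example~\ref{ex:C/Rcase}). The set $\Irr_0$ is characterized by $\bar V'\not\cong V'$, i.e.\ $\bar\lambda\neq w\lambda$, by the equivalence just noted. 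Finally $\Irr_{-1}=\Irr(G\otimes_F F')\setminus(\Irr_1\sqcup\Irr_0)$: indeed by the proof of Proposition~5.2 of \cite{MR3770183} these three subsets partition $\Irr(G\otimes_F F')$, so $\Irr_{-1}$ consists of those $V'$ with $\bar V'\cong V'$ (hence $\bar\lambda=w\lambda$) that do \emph{not} admit an $F$-form, which by Theorem~\ref{thm:descentofrep} means there is no $c\in F'$ with $\lambda(\bar w w)=\bar c c$; one should also remark that for such $V'$ the restriction $\Res_{F'/F}V'$ is irreducible, which holds because $\Res_{F'/F}V'$ is reducible exactly when $V'$ is defined over $F$ (cited as \cite{MR3770183} Proposition~5.1 in the paragraph above). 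I would present this as three short verifications, one per subset.

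For part~(2): the equivalence relation on $\Irr_0$ was defined by $\bar V'\sim V'$; transporting along the lowest-weight bijection, $\bar V'$ corresponds to the lowest weight $\bar\lambda$ expressed relative to $\Pi$, which equals $\overline{w\lambda}$ after rewriting with respect to the chosen simple system (since $\overline{\bar\Pi}=\Pi$ and $\bar\Pi=w\Pi$). Hence the induced relation is generated by $\lambda\sim\overline{w\lambda}$. For part~(3): this is the ``moreover'' clause of Theorem~\ref{thm:descentofrep} (equivalently Corollary~\ref{cor:quadcase} together with Theorem~\ref{thm:descentofllambda}) applied with $P'=B'$—given $\lambda$ and $c$ as in the statement, $\cL_\lambda$ admits a descent datum to an $F$-form, and taking global sections of that $F$-form yields the $F$-form of $V'$, using $V'\cong\Gamma((G\otimes_F F')/B',\cL_\lambda)$ as in the setup of Section~2.4.

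The main obstacle is purely bookkeeping rather than mathematical: I must be careful about the direction of the weight/representation dictionary (lowest versus highest weights, and which simple system a given representation is ``adapted'' to after a Galois twist) so that the three conditions come out with $w\lambda$ on the correct side and the equivalence relation is $\lambda\sim\overline{w\lambda}$ rather than its inverse. The one genuinely external input is the identification $H^2(\bZ/2\bZ,(F')^\times)\cong F^\times/N_{F'/F}((F')^\times)$, which I would invoke as a standard fact (e.g.\ via \cite{MR4174395}) to convert ``$\beta_\lambda$ trivial'' into the norm condition $\lambda(\bar w w)\in N_{F'/F}((F')^\times)$; this is already implicitly the content of Corollary~\ref{cor:quadcase}(ii), so no real work is needed.
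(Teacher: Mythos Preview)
Your proposal is correct and follows exactly the route the paper intends: the corollary is stated in the paper without proof, as an immediate consequence of Theorem~\ref{thm:descentofrep}, Corollary~\ref{cor:quadcase}, and the decomposition $\Irr G\cong\Irr_1\coprod\Irr_0/\!\sim\,\coprod\Irr_{-1}$ displayed just before it. Your bookkeeping for part~(2) is right---the key identity $\overline{w\lambda}=\bar{w}\,\bar\lambda=w^{-1}\bar\lambda$ (using $\bar w=w^{-1}$ in the Weyl group from the cocycle condition $w_e=\bar w w$) shows that the lowest weight of $\bar V'$ with respect to $\Pi$ is indeed $\overline{w\lambda}$, which is precisely the generalized Satake involution applied to $\lambda$.
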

	\begin{ex}
		Let $G_q$ and $B'$ be as in Example \ref{ex:counterexample}. Let $H\subset G_q$ be the subgroup consisting of diagonal matrices. Then $H$ is a maximal torus of $G_q$ such that $H\otimes_\bQ \bQ(\sqrt{-1})$ is split and $H\otimes_\bQ \bQ(\sqrt{-1}) \subset B'$. For an integer $n$, define $\lambda_n\in X^\ast(H\otimes_F F')$ by
		\[\lambda_n\left(\left(\begin{array}{cc}
			a & 0 \\
			0 & a^{-1}
		\end{array}
		\right)\right)=a^n.\]
		Then we have the following bijections:
		\[\Irr_1 (G_q\otimes_\bQ \bQ(\sqrt{-1}))\cong
		\{\lambda_{-2n}\in X^\ast(H\otimes_F F'):~n\in\bN\}\]
		\[\Irr_0 (G_q\otimes_\bQ \bQ(\sqrt{-1}))=\emptyset\]
		\[\Irr_{-1} (G_q\otimes_\bQ \bQ(\sqrt{-1}))\cong
		\{\lambda_{-2n-1}\in X^\ast(H\otimes_F F'):~n\in\bN\}.\]
	\end{ex}
	\begin{ex}
		One can show that the real flag variety of $\SO(3)$ (see \cite{MR0218363} Corollaire 5.8.3) is isomorphic to the subvariety of the real projective space $\bP^2$ defined by the 
		homogeneous polynomial $x^2+y^2+z^2$. Indeed, we will prove a similar result over $\bZ\left[1/2\right]$ in \cite{hayashikgb}. This leads to the model of irreducible representations of $\SO(3)$ in the beginning of Section 1.
	\end{ex}
	\section{The standard forms}
	In this section, we construct standard $\bZ\left[1/2\right]$-forms of classical connected Lie groups, their maximal compact subgroups, and their fundamental Cartan subgroups by using involutions as preliminaries to Section 4 of concrete applications of the abstract theory in Section 2.
	\subsection{Symmetric and fundamental Cartan subgroups}
	In Section 2.2 we explained how to descend equivariant line bundles on partial flag schemes. We also saw in Section 2.3 how we could check the descent conditions by using maximal tori. In this section, we introduce symmetric pairs and a special class of maximal tori to construct reductive groups schemes and their maximal tori. Throughout this section, let $k$ be a commutative $\bZ\left[1/2\right]$-algebra.
	
	Firstly, we supply a way to construct pairs $(G,K)$ of standard $\bZ\left[1/2\right]$-forms of classical Lie groups and their maximal compact subgroups. The following observation is helpful:
	\begin{lem}\label{lem:fixedpoint}
		Let $X$ be a smooth separated scheme over $k$, equipped with an involution $\theta$. Then the $k$-space
		\[X^{\theta}:R\mapsto \{x\in X(R):~\theta(x)=x\}\]
		is represented by a smooth closed subscheme of $X$ over $k$.
	\end{lem}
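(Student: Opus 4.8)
The plan is to reduce the statement to a local computation and then invoke the infinitesimal criterion of smoothness together with a standard averaging trick available because $2$ is invertible. First I would observe that representability and the closed immersion property can be checked étale-locally (indeed Zariski-locally) on $k$, since $X$ is separated: for any $k$-scheme $T$ the fixed locus $X^\theta(T)$ is the equalizer of $\id_X$ and $\theta$, i.e.\ the preimage of the diagonal under $(\id,\theta)\colon X\to X\times_k X$, and the diagonal of a separated scheme is a closed immersion; hence $X^\theta = X\times_{X\times_k X}\Delta_X$ is represented by a closed subscheme of $X$. So the only real content is smoothness of $X^\theta$ over $k$.

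For smoothness I would use the functorial (infinitesimal lifting) criterion: given a square-zero extension $R'\twoheadrightarrow R$ of $k$-algebras with kernel $I$ ($I^2=0$) and a point $x\in X^\theta(R)$, I must lift $x$ to $X^\theta(R')$. Since $X$ is smooth over $k$, the set of lifts of $x$ to $X(R')$ is a torsor under $\Hom_R(x^\ast\Omega^1_{X/k}, I)$, which I abbreviate $T$; the involution $\theta$ acts on this torsor, and the lifts lying in $X^\theta(R')$ are exactly the $\theta$-fixed ones (using that $\theta$ is defined over $k$ and fixes $x$, so it carries lifts to lifts and acts semilinearly-compatibly on the torsor, with the underlying action on $T$ being $R$-linear of order dividing $2$). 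Now a standard argument: pick any lift $x'\in X(R')$; then $\theta(x')$ is another lift, so $\theta(x') = x' + \delta$ for a unique $\delta\in T$; applying $\theta$ again and using $\theta^2=\id$ gives $\delta + \theta\cdot\delta = 0$ in $T$ (here $\theta$ acts on $T$; the minor care needed is that $\theta$ acts on the torsor compatibly with its linear action on $T$, which holds because $\theta$ fixes $x$). Because $2\in k^\times$, the element $x'' \coloneqq x' + \tfrac12\,\delta$ is a lift with $\theta(x'') = x''$, i.e.\ $x''\in X^\theta(R')$. This verifies the lifting property, so $X^\theta\to\Spec k$ is formally smooth; combined with the fact that it is locally of finite presentation (being a closed subscheme of the finitely presented $X$ — one may reduce to $k$ Noetherian or finitely generated over $\bZ[1/2]$ for this, or argue directly), it is smooth.

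The step I expect to be the main obstacle is making the torsor bookkeeping precise: one must check that the action of $\theta$ on the set of infinitesimal lifts of $x$ is compatible with its action on the tangent module $T = \Hom_R(x^\ast\Omega^1_{X/k},I)$, i.e.\ that $\theta(x'+v) = \theta(x') + \theta_\ast v$ for $v\in T$, where $\theta_\ast$ is the $R$-linear map induced by $\theta$ on $T$ (this uses $\theta(x)=x$), and that $\theta_\ast^2 = \id_T$. Granting this, the identity $\delta + \theta_\ast\delta = 0$ and invertibility of $2$ finish the argument cleanly. An alternative, perhaps cleaner, packaging is: étale-locally write $X\hookrightarrow \mathbb A^n_k$ $\theta$-equivariantly for a linear involution on $\mathbb A^n$ (possible after averaging a local system of coordinates, using $2\in k^\times$), decompose $\mathbb A^n = \mathbb A^+\oplus\mathbb A^-$ into $\pm 1$-eigenspaces, and show $X^\theta = X\cap \mathbb A^+$ is cut out in $X$ by the $\theta$-anti-invariant coordinates, whose differentials are linearly independent transversally — but this requires more care to globalize, so I would present the infinitesimal-criterion proof as the main line and mention the eigenspace picture as intuition.
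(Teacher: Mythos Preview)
Your proof is correct. The closed-immersion step is identical to the paper's. For smoothness, both you and the paper verify formal smoothness via the infinitesimal lifting criterion together with an averaging argument using $2\in k^\times$, but the packaging differs. The paper first reduces to the affine case (covering $X$ by $\theta$-stable affine opens $U_i\cap\theta(U_i)$), decomposes the coordinate ring as $A=A^0\oplus A^1$ into $\theta$-eigenspaces, identifies $X^\theta=\Spec A/(A^1)$, and then solves the lifting problem for $A/(A^1)$ by taking an arbitrary lift $h\colon A\to R$ and replacing it by $h|_{A^0}\circ p$ where $p\colon A\to A^0$ is the eigenspace projection. Your argument instead stays geometric, working with the torsor structure on the set of lifts of a fixed point and averaging the lift itself via $x''=x'+\tfrac12\delta$. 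These are dual formulations of the same idea: the paper's version avoids the torsor-compatibility bookkeeping you flagged, while yours avoids the reduction to affines and the (implicit) check that $h|_{A^0}\circ p$ is a ring homomorphism, which also uses $J^2=0$. Your alternative eigenspace sketch at the end is in fact essentially the paper's argument.
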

	\begin{proof}
		Since $X^\theta$ is isomorphic to the intersection of $\{(x,x)\in X\times X:~x\in X\}\cong X$ and $\{(x,\theta(x))\in X\times X:~x\in X\}\cong X$ in $X\times X$, $X^\theta$ is represented by a closed subscheme of $X$ locally of finite presentation over $k$.
		
		We prove that $X^\theta$ is smooth over $k$. Choose an affine open covering $X=\cup_i U_i$. Then the $\theta$-stable open subscheme $\cup_i (U_i\cap \theta(U_i))\subset X$ contains $X^\theta$. For each index $i$, $U_i\cap \theta(U_i)$ is affine since $X$ is separated. Therefore we may replace $X$ by $\cup_i (U_i\cap \theta(U_i))$ to assume that $U_i$ are $\theta$-stable. Since $U_i^\theta\cong U_i\times_X X^\theta$, we may replace $X$ by $U_i$ to assume that $X$ is affine.
		
		Write $A$ for the coordinate ring of $X$. We denote the involution on $A$ corresponding to $\theta$ by the same symbol $\theta$.
		Set
		\[A^0=\{a\in A:~\theta(a)=a\}\]
		\[A^1=\{a\in A:~\theta(a)=-a\}.\]
		Since $2$ is a unit of $A$, we have a decomposition $A=A^0\oplus A^1$ as a $k$-module. Let $I$ be the ideal of $A$ generated by $A^1$. It is easy to show that for a $k$-algebra $B$ and a $k$-algebra homomorphism $f:A\to B$, $f$ belongs to $X^\theta(B)$ if and only if $f(A^1)=0$. Hence $X^\theta$ is represented by $\Spec A/I$.
		
		The proof is completed by showing that $X^\theta$ is formally smooth over $k$. Suppose that we are given a diagram of commutative rings
		\[\begin{tikzcd}
			k\ar[r, "f"]\ar[d]&R\ar[dd]\\
			A\ar[d]\ar[ru, dashed, "h"]&\\
			A/I\ar[r, "g"]&R/J.
		\end{tikzcd}\]
		Here $J$ is a square-zero ideal of $R$, the map $k\to A$ is the structure homomorphism, and the other unlabeled arrows $A\to A/I$ and $R\to R/J$ are the quotient maps. The dashed arrow $h$ exists since $X$ is formally smooth. We denote the projection map $A\to A^0$ by $p$. Then $h|_{A^0}\circ p$ descends to a ring homomorphism $A/I\to R$. This is a solution of the lifting problem.
	\end{proof}
	\begin{ex}[symmetric pair]\label{ex:symmpair}
		Let $G$ be a reductive group scheme over $k$, equipped with an involution $\theta$. Write $(G^\theta)^\circ$ for the unit component of $G^\theta$ (\cite{MR0217086} 15.6.5). Then $(G^\theta)^\circ$ is a closed reductive subgroup scheme by \cite{MR3362641} Proposition 3.1.3. Note that $(G^\theta)^\circ=G^\theta$ if every geometric fiber of $G$ is simply connected (\cite{MR0230728} Theorem 8.1). We call an open and closed subgroup scheme $H\subset G^\theta$ a symmetric subgroup of $G$, and $(G,H)$ a symmetric pair.
	\end{ex}
	We will give examples in Section 3.3. Let us next introduce a special class of maximal tori for symmetric pairs.
	\begin{thm}\label{thm:fundamentalcartan}
		Let $G$ be a reductive group scheme over $k$, $K\subset G$ be a symmetric subgroup with connected geometric fibers, and $T$ be a maximal torus of $K$. Then $Z_G(T)$ is a maximal torus of $G$. We call $Z_G(T)$ a fundamental Cartan subgroup of $G$.
	\end{thm}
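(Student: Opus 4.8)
The plan is to reduce to the case where $k$ is a field (in fact an algebraically closed field) by a standard fibrewise criterion, and then invoke the classical structure theory of symmetric spaces. First I would observe that the problem is local on $\Spec k$ in the fppf (even Zariski) topology, and that $Z_G(T)$ is already known to be a subgroup scheme of $G$ that is smooth and affine: indeed $T$ is a torus in $K$, hence $Z_G(T) = Z_G(\mu)$ for a suitable cocharacter $\mu$ of $T$ after passing to an fppf cover that splits $T$ — and by the recollections in the Notation section $Z_G(\mu)$ is a reductive subgroup scheme of $G$. So $Z_G(T) \subset G$ is a reductive closed subgroup scheme; to prove it is a maximal torus it suffices to prove that it is a torus, and to prove that it suffices (by the valuative/fibrewise characterisation of tori among smooth affine group schemes, e.g. via \cite{MR3362641}) to check that every geometric fibre $Z_G(T)_{\bar s} = Z_{G_{\bar s}}(T_{\bar s})$ is a torus, using that $T_{\bar s}$ is a maximal torus of the (connected, by hypothesis) symmetric subgroup $K_{\bar s} = (G_{\bar s}^{\theta_{\bar s}})^\circ$. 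Here I would want to be a little careful that the formation of $Z_G(T)$, of the fixed-point subgroup $K = G^\theta$ (which is smooth closed affine by Theorem~E / Lemma~\ref{lem:fixedpoint}), and of "maximal torus of $K$" all commute with the base change to a geometric point; the first two are formal, and for the third one uses that $T$ is a maximal torus of the smooth affine group scheme $K$ with connected fibres, so $T_{\bar s}$ is a maximal torus of $K_{\bar s}$.

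Having reduced to $k = \bar k$ an algebraically closed field of characteristic $\ne 2$, the statement is the classical fact that a fundamental Cartan subgroup is a Cartan subgroup: if $T$ is a maximal torus of $K = (G^\theta)^\circ$, then $Z_G(T)$ is a maximal torus of $G$. The key steps are: (1) $Z_G(T)$ is connected reductive (already have this); (2) a maximal torus $H$ of $Z_G(T)$ is a maximal torus of $G$, since $H$ contains $T$, hence $Z_G(H) \subset Z_G(T)$, so $Z_G(H)$ is its own centraliser-in-$Z_G(T)$ maximal torus, and $Z_G(H) = H$ because $H$ is maximal in the reductive group $Z_G(T)$; thus $H$ is a maximal torus of $G$; (3) the content is therefore to show $Z_G(T)$ is itself a torus, i.e. $Z_G(T) = H$, equivalently that $Z_G(T)$ contains no nontrivial semisimple-rank part. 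For this I would argue that $\theta$ acts on $Z_G(T)$ fixing $T$ pointwise, and on the derived group $\sD(Z_G(T))$ the involution $\theta$ has the property that $T \cap \sD(Z_G(T))$ is a maximal torus of $\sD(Z_G(T))^\theta$; but $\theta$ acts trivially on the maximal torus $T$ of $\sD(Z_G(T))$ (a maximal torus of $Z_G(T)$ meeting $\sD$ in a maximal torus of $\sD(Z_G(T))$), and an involution of a semisimple group acting trivially on a maximal torus is inner, given by conjugation by an element of that torus, hence is trivial on the whole group; so $\theta$ is trivial on $\sD(Z_G(T))$, forcing $\sD(Z_G(T)) \subset K$, so $\sD(Z_G(T))$ normalises $T$ and centralises it (being connected), i.e. $\sD(Z_G(T)) \subset Z_{\sD(Z_G(T))}(T) \subset$ a torus, whence $\sD(Z_G(T))$ is trivial and $Z_G(T) = H$ is a torus.

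The main obstacle I expect is step (3) over a general base, specifically making the argument "$\theta$ trivial on a maximal torus $\Rightarrow$ $\theta$ trivial" work integrally rather than just fibrewise, and justifying that $T \cap \sD(Z_G(T))$ is maximal in $\sD(Z_G(T))^{\theta}$ — this last point uses a compatibility between "maximal torus of $K$" and "maximal $\theta$-fixed torus of $Z_G(T)$" that is cleanest to see by the rank count $\dim T + \#\{\text{noncompact imaginary + complex roots}\}/\dots$ over a geometric point and then propagate. Because of this, my actual recommended route is to do the hard core (step 3) only over geometric points — where it is the textbook statement that fundamental Cartan subgroups are Cartan — and to handle the base $k$ purely by the descent/fibrewise-criterion machinery of the first paragraph, so that no delicate integral structure theory is needed. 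Two further small points to check along the way: that $Z_G(T)$ is smooth over $k$ (it is, being $Z_G(\mu)$, reductive), and that it has connected fibres (again from the $Z_G(\mu)$ description), which is what lets the fibrewise "is a torus" criterion apply.
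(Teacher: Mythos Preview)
Your reduction to geometric fibres matches the paper exactly: show $Z_G(T)$ is a closed reductive subgroup scheme of $G$, then invoke the fibrewise criterion for being a (maximal) torus (the paper's Lemma~\ref{lem:toruscriterion}) to reduce to $k=\bar k$. For the algebraically closed case the paper (Lemma~\ref{lem:funcar/acfield}) takes a route different from yours, via the dynamic method: pick a $\theta$-stable pair $(B,H)$ by Steinberg's theorem, write $B=P_G(\mu)$ for a regular cocharacter $\mu$, and replace $\mu$ by $\mu+\theta\mu$ (still regular, since $\theta$ preserves the positive roots of $B$), so $\mu$ lands in $K$ and is regular there as well; then $Z_K(\mu)$ is a maximal torus of $K$, hence $K$-conjugate to $T$, and transporting by that conjugation exhibits $Z_G(T)$ as a conjugate of $H$.

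Your step~(3), by contrast, contains a genuine error. Since $T$ is the torus being centralised, $T$ lies in the \emph{center} of $Z_G(T)$; hence $T\cap\sD(Z_G(T))\subset Z(\sD(Z_G(T)))$ is finite, and $T$ is certainly not ``a maximal torus of $\sD(Z_G(T))$'' (nor of $Z_G(T)$, a priori). The chain ``$\theta$ trivial on a maximal torus $\Rightarrow$ inner by a torus element $\Rightarrow$ trivial on the whole group'' also fails: conjugation by a non-central torus element acts nontrivially on root subgroups. A corrected version of your idea does exist---show $T$ is a maximal torus of $(Z_G(T)^\theta)^\circ$ (any larger torus would be a torus of $K$ containing $T$), deduce that $(\sD(Z_G(T))^\theta)^\circ$ has no nontrivial torus and is therefore trivial, and then check that no nontrivial connected semisimple group carries an involution with finite fixed locus (on a $\theta$-stable maximal torus $\theta$ would act by $-1$, and then each rank-one subgroup $G_\alpha$ would carry an involution with a one-dimensional $\SO(2)$-type fixed torus)---but that is not what you wrote, and ``cite the textbook statement'' is not a proof either. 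The paper's cocharacter argument is shorter and avoids this case analysis.
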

	We analyze $Z_G(T)$ by using the dynamic method.
	\begin{defn}\label{defn:regcochar}
		We say a cocharacter $\mu:\bG_m\to G$ is regular if $Z_G(\mu)$ is a torus.
	\end{defn}
	\begin{lem}\label{lem:toruscriterion}
		For a reductive group scheme $H$ over a scheme $S$, the following conditions are equivalent:
		\begin{enumerate}
			\renewcommand{\labelenumi}{(\alph{enumi})}
			\item $H$ is a torus.
			\item $H$ is a commutative group scheme.
			\item We have $Z_H=H$.
			\item For every geometric point $\bar{s}$ of $S$, $H_{\bar{s}}$ is a torus.
			\item For every geometric point $\bar{s}$ of $S$, $H_{\bar{s}}$ is a commutative algebraic group.
			\item For every geometric point $\bar{s}$ of $S$, $Z_{H_{\bar{s}}}=H_{\bar{s}}$.
		\end{enumerate}
	\end{lem}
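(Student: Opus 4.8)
The plan is to establish the cycle of implications
\[\text{(a)}\Rightarrow\text{(b)}\Leftrightarrow\text{(c)}\Rightarrow\text{(a)}\quad\text{and}\quad\text{(a)}\Rightarrow\text{(d)}\Rightarrow\text{(e)}\Leftrightarrow\text{(f)}\Rightarrow\text{(c)},\]
most arrows of which are either definitional or immediate. First I would record the formal ones. By the very definition of the scheme-theoretic centre, $Z_H=H$ holds exactly when $H$ is commutative, which gives (b) $\Leftrightarrow$ (c); applying the same remark over each geometric point $\bar s$ gives (e) $\Leftrightarrow$ (f). A torus over $S$ becomes isomorphic to some $\bG_m^n$ after an fppf (indeed \'etale) base change, hence is commutative and stays a torus after any base change; this disposes of (a) $\Rightarrow$ (b), (a) $\Rightarrow$ (d) and (d) $\Rightarrow$ (e). So only (c) $\Rightarrow$ (a) and (f) $\Rightarrow$ (c) carry content.

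For (c) $\Rightarrow$ (a), I would use that, for a reductive group scheme $H$ over $S$, the centre $Z_H$ is a closed subgroup scheme of multiplicative type whose formation commutes with base change (see \cite{MR3362641}). Under (c), $H=Z_H$ is therefore of multiplicative type; since $H$ is also smooth with connected geometric fibres (being reductive), the structure theory of groups of multiplicative type identifies $H$ with a torus (a smooth $S$-group of multiplicative type with connected fibres is a torus).

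For (f) $\Rightarrow$ (c), I would exploit the same structural facts: $Z_H$ is flat over $S$ (being of multiplicative type), is locally of finite presentation over $S$, and $(Z_H)_s=Z_{H_s}$ for every $s\in S$ by base-change compatibility of the centre. Condition (f) says that the closed immersion $i\colon Z_H\hookrightarrow H$ becomes an isomorphism on every geometric fibre, hence (since being an isomorphism descends along the faithfully flat extensions $\kappa(s)\to\overline{\kappa(s)}$) on every fibre $i_s$. As $H$ is flat and locally of finite presentation over $S$, the fibrewise criterion for flatness makes $i$ flat, and $i$ is surjective because it is fibrewise an isomorphism; thus $i$ is a faithfully flat monomorphism, hence an isomorphism, i.e.\ $Z_H=H$ and (c) holds. (Alternatively one invokes the fibrewise criterion for isomorphisms applied to the proper flat morphism $i$.)

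The main obstacle is precisely the passage (f) $\Rightarrow$ (c): upgrading the pointwise equalities $Z_{H_{\bar s}}=H_{\bar s}$ to the scheme-theoretic equality $Z_H=H$ over all of $S$. This is where one genuinely needs the input that the centre of a reductive group scheme is of multiplicative type, flat over the base, and compatible with base change, together with the descent principle that a faithfully flat monomorphism is an isomorphism; every other implication is either a tautology or the elementary remark that tori are exactly the reductive group schemes that are fppf-locally of the form $\bG_m^n$.
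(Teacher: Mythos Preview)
Your proof is correct and follows essentially the same route as the paper's. Both arguments identify the only nontrivial step as lifting the fibrewise equality $Z_{H_{\bar s}}=H_{\bar s}$ to the global equality $Z_H=H$, and both do this by invoking that the centre of a reductive group scheme is of multiplicative type (hence flat and finitely presented over $S$) together with a fibrewise isomorphism criterion; the paper packages this last step as a citation to \cite{MR3362641} Lemma~B.3.1, while you spell out the faithfully-flat-monomorphism argument. Your implication (c)~$\Rightarrow$~(a) via ``smooth multiplicative type with connected fibres is a torus'' is the same content as the paper's (b)~$\Rightarrow$~(a) via \cite{MR3362641} Theorem~5.3.1.
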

	\begin{proof}
		The implications (a)$\Rightarrow$ (b) $\Leftrightarrow$ (c) $\Rightarrow$ (f) $\Leftrightarrow$ (e) $\Leftarrow$ (d) are obvious. \cite{MR3362641} Theorem 5.3.1 implies (b) $\Rightarrow$ (a) and (d) $\Rightarrow$ (e).
		
		Finally, suppose that $G$ satisfies (d). Since $H$ is reductive, $Z_H$ is of multiplicative type in the sense of \cite{MR3362641} Definition B.1.1 (\cite{MR3362641} Theorem 3.3.4). In particular, $Z_H$ is flat of finite presentation over $S$ (see below \cite{MR3362641} Definition B.1.1). The condition (c) now follows from \cite{MR3362641} Lemma B.3.1.
	\end{proof}
	\begin{lem}\label{lem:regcochar}
		For a cocharacter $\mu$ of $G$, the following conditions are equivalent:
		\begin{enumerate}
			\renewcommand{\labelenumi}{(\alph{enumi})}
			\item $\mu$ is regular.
			\item $Z_G(\mu)$ is a maximal torus.
			\item For every geometric point $\bar{s}$ of $\Spec k$, $\mu_{\bar{s}}$ factors through a maximal torus $H$ such that for every root $\alpha\in\Delta(G_{\bar{s}}, H)$, we have $\langle \mu_{\bar{s}},\alpha\rangle\neq 0$.
			\item $P_G(\mu)$ is a Borel subgroup.
		\end{enumerate}
		In particular, $\mu$ factors through a unique maximal torus if $\mu$ is regular.
	\end{lem}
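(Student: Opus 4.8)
The plan is to push everything down to the geometric fibers, where the statement is classical structure theory of reductive groups over an algebraically closed field, and then to glue the fiberwise information back up using Lemma~\ref{lem:toruscriterion} together with the fact that ``maximal torus'' and ``Borel subgroup'' are fiberwise notions for group schemes.

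First I would record the facts about the dynamic method that hold over the base $k$ itself. The cocharacter $\mu$ factors through $Z_G(\mu)$ (since $\bG_m$ is commutative and $\mu$ is a homomorphism), $Z_G(\mu)$ is a reductive subgroup scheme, and $Z_G(\mu)$ is a Levi factor of the parabolic subgroup $P_G(\mu)$, whose third term is the unipotent radical (all recalled in the excerpt, together with \cite{MR3362817} Section~2.1). The geometric heart is then: over an algebraically closed field $\kappa$, choosing a maximal torus $H\subseteq G_\kappa$ through which $\mu_\kappa$ factors --- such $H$ exists because $\mu_\kappa(\bG_m)$ is a torus and every torus lies in a maximal torus --- one has the root-group descriptions $Z_{G_\kappa}(\mu_\kappa)=\langle H,\, U_\alpha:\langle\mu_\kappa,\alpha\rangle=0\rangle$ and $P_{G_\kappa}(\mu_\kappa)=\langle H,\, U_\alpha:\langle\mu_\kappa,\alpha\rangle\geq 0\rangle$. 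Hence $Z_{G_\kappa}(\mu_\kappa)$ is reductive of full rank, and is a torus if and only if it equals $H$, if and only if $\langle\mu_\kappa,\alpha\rangle\neq 0$ for every root $\alpha$ of $(G_\kappa,H)$, if and only if $P_{G_\kappa}(\mu_\kappa)$ is a Borel subgroup (a parabolic subgroup over an algebraically closed field is Borel exactly when its Levi is a torus).

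With this in hand the equivalences assemble quickly. For (a)$\Leftrightarrow$(c): by Lemma~\ref{lem:toruscriterion}, $Z_G(\mu)$ is a torus iff $Z_{G_{\bar s}}(\mu_{\bar s})$ is a torus for every geometric point $\bar s$, and by the previous paragraph this holds iff $\mu_{\bar s}$ factors through a maximal torus no root of which is killed by $\mu_{\bar s}$ --- for the ``only if'' direction one takes that torus to be $Z_{G_{\bar s}}(\mu_{\bar s})$ itself. For (a)$\Leftrightarrow$(b): $Z_G(\mu)$ is reductive and of full rank (checked fiberwise via the displayed descriptions), so it contains a maximal torus of $G$, and a reductive group scheme of full rank is a torus iff it coincides with that maximal torus, i.e.\ iff it is a maximal torus of $G$. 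For (a)$\Leftrightarrow$(d): $P_G(\mu)$ is a parabolic subgroup, so it is a Borel subgroup iff every geometric fiber $P_{G_{\bar s}}(\mu_{\bar s})$ is a Borel subgroup of $G_{\bar s}$, which by the previous paragraph is equivalent to $Z_{G_{\bar s}}(\mu_{\bar s})$ being a torus for all $\bar s$, hence to (a) by Lemma~\ref{lem:toruscriterion}. Finally, for the ``in particular'' clause, if $\mu$ is regular then $H:=Z_G(\mu)$ is a maximal torus through which $\mu$ factors; if $\mu$ also factors through a maximal torus $H'$, then $H'$ centralizes $\mu$, so $H'\subseteq Z_G(\mu)=H$, and since $H'$ and $H$ are maximal tori of $G$ this closed immersion is an isomorphism on geometric fibers and hence an isomorphism.

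I expect no single deep new idea to be required; the real work, and the main obstacle, is being careful about the base --- making the root-group descriptions, Lemma~\ref{lem:toruscriterion}, and the fiberwise definitions of Borel subgroup and maximal torus interact correctly, and in particular invoking (or reproving fiberwise) the standard fact that $Z_G(\mu)$ always has full rank. Everything past that point is bookkeeping.
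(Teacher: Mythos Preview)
Your proposal is correct and takes essentially the same approach as the paper: both reduce to geometric fibers via Lemma~\ref{lem:toruscriterion} and the fiberwise nature of the notions ``maximal torus'' and ``Borel subgroup'', then settle the algebraically closed case using the root-space description of $Z_G(\mu)$ and $P_G(\mu)$. The paper organizes the algebraically closed case as a cycle (a)$\Rightarrow$(b)$\Rightarrow$(c)$'$$\Rightarrow$(c)$\Rightarrow$(d)$\Rightarrow$(a) rather than reading all equivalences off the root-group decomposition at once, but the substance is the same.
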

	\begin{proof}
		In view of Lemma \ref{lem:toruscriterion}, we may pass to geometric fibers to assume that $k=F$ is an algebraically closed field. Consider the following condition:
		\begin{enumerate}
			\item[(c)'] $Z_G(\mu)$ is a maximal torus. Moreover, for every root $\alpha\in\Delta(G, Z_G(\mu))$, $\langle \mu,\alpha\rangle\neq 0$.
		\end{enumerate}
		We plan to prove (a) $\Rightarrow$ (b) $\Rightarrow$ (c)' $\Rightarrow$ (c) $\Rightarrow$ (d) $\Rightarrow$ (a).
		
		Suppose that (a) is satisfied. It follows by definition that $\mu$ maps to $Z_G(\mu)$. Let $H\subset G$ be a torus containing $Z_G(\mu)$. Then $H$ is contained in $Z_G(\mu)$ by definition of $Z_G(\mu)$. This shows (b).
		
		Suppose that $\mu$ satisfies (b). Put $H=Z_G(\mu)$, and $\fh$ be its Lie algebra. For $\alpha\in\Delta(G,Z_G(\mu))\cup\{0\}$, write $\mathfrak{g}_\alpha$ for the $\alpha$-weight space. It follows by definition that $\bG_m$ acts on $\mathfrak{g}_\alpha$ through $\mu$ by $\langle\mu,\alpha\rangle$. \cite{MR3362817} Proposition 2.1.8 implies $\fh=\fh\oplus\oplus_{\overset{\alpha\in\Delta(G,H)}{
				\langle \mu,\alpha\rangle=0	}}\mathfrak{g}_\alpha$. This shows (c)', i.e., no root $\alpha$ can satisfy $\langle \mu,\alpha\rangle=0$.
		
		Since (c)' is stable under extension of algebraically closed field, (c)' implies (c).
		
		The implication (c) $\Rightarrow$ (d) follows from the proof of \cite{MR3362817} Proposition 2.2.9 (if necessary, see also \cite{MR1102012} 13.8).
		
		Set $\Delta^+(G,H)\coloneqq\{\alpha\in\Delta(G,H):~\langle\mu,\alpha\rangle>0\}$. Then the Lie algebra of $P_G(\mu)$ is $\fh\oplus\oplus_{\alpha\in\Delta^+(G,H)}\mathfrak{g}_\alpha$ by \cite{MR3362641} Proposition 2.1.8, where $\fh$ is the Lie algebra of $H$. Suppose that $\mu$ satisfies (c). Then $\Delta^+(G,H)$ is a positive system of $\Delta(G,H)$. In view of the proof of \cite{MR3362641} Proposition 1.4.4, we can attach a Borel subgroup $B$ of $G$ whose Lie algebra is $\fh\oplus\oplus_{\alpha\in\Delta^+(G,H)}\mathfrak{g}_\alpha$. We can conclude that $P_G(\mu)=B$ from \cite{MR3362817} Proposition 2.18 and \cite{MR3729270} Proposition 10.15. In particular, (d) holds.
		
		Finally, (d) implies (a) from \cite{MR3362641} Example 4.1.9 and \cite{MR1102012} 10.6 Theorem (2).
	\end{proof}
	\begin{cor}\label{cor:regcochar}
		Let $\mu$ be a cocharacter of $K$. If $\mu$ is regular in $G$, $\mu$ is so in $K$.
	\end{cor}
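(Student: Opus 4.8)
The plan is to recognize $Z_K(\mu)$ as a commutative reductive subgroup scheme of $K$ and then invoke Lemma \ref{lem:toruscriterion}. Throughout, $K$ denotes the symmetric subgroup with connected geometric fibers as in Theorem \ref{thm:fundamentalcartan}, and $\mu$ is viewed as a cocharacter of $G$ via the inclusion $K\subset G$.

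First I would note that, functorially, $Z_K(\mu)$ is a subgroup $k$-space of $Z_G(\mu)$. Indeed, for every commutative $k$-algebra $A$ one has, inside $G(A)$, the identity $Z_K(\mu)(A)=K(A)\cap Z_G(\mu)(A)$: this is immediate from the defining conditions of these centralizers and the fact that conjugation by $\mu(a)$ for $g\in K(A')$ can be computed in the subgroup $K$. Now use the hypothesis that $\mu$ is regular in $G$: by Definition \ref{defn:regcochar}, $Z_G(\mu)$ is a torus, in particular a commutative group scheme. Hence each $Z_K(\mu)(A)$ is a subgroup of the abelian group $Z_G(\mu)(A)$ and is therefore abelian, so $Z_K(\mu)$ is a commutative group scheme.

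On the other hand, $Z_K(\mu)$ is a reductive subgroup scheme of $K$: this is the general structural property of the dynamic method (the statement recalled in the Notation subsection, with references \cite{MR3362817} and \cite{MR3362641}) applied to the reductive group scheme $K$ in place of $G$. Being both reductive and commutative, $Z_K(\mu)$ is a torus by Lemma \ref{lem:toruscriterion}, implication (b)\,$\Rightarrow$\,(a). By Definition \ref{defn:regcochar} this says precisely that $\mu$ is regular in $K$, which completes the argument.

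I do not anticipate a genuine obstacle here; the one point that must be kept straight is the applicability of the dynamic-method facts to $K$, i.e.\ that $K$ is itself a reductive group scheme. This is exactly where the connectedness of the geometric fibers of the symmetric subgroup $K$ enters: an open and closed subgroup scheme of $G^\theta$ with connected geometric fibers coincides with $(G^\theta)^\circ$, which is reductive by Example \ref{ex:symmpair}.
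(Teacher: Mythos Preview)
Your proof is correct and follows essentially the same route as the paper: recall that $Z_K(\mu)$ is reductive (via the dynamic method applied to the reductive group scheme $K$), observe $Z_K(\mu)\subset Z_G(\mu)$ so it is commutative, and conclude by Lemma~\ref{lem:toruscriterion}(b)$\Rightarrow$(a). Your added justification that $K$ is reductive (because connected geometric fibers force $K=(G^\theta)^\circ$) makes explicit a point the paper leaves implicit.
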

	\begin{proof}
		Recall that $Z_K(\mu)$ is reductive. The assertion is immediate from $Z_K(\mu)\subset Z_G(\mu)$ and Lemma \ref{lem:toruscriterion} (b).
	\end{proof}
	\begin{lem}\label{lem:funcar/acfield}
		Theorem \ref{thm:fundamentalcartan} holds if $k=F$ is an algebraically closed field.
	\end{lem}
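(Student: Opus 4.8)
Lemma \ref{lem:funcar/acfield} is the statement of Theorem \ref{thm:fundamentalcartan} over an algebraically closed field $F$, whose characteristic is $0$ or an odd prime. The plan is to prove directly that $L\coloneqq Z_G(T)$ is a torus; since the centralizer of any torus in the (connected) reductive group $G$ contains a maximal torus of $G$, a torus $L$ containing that maximal torus must equal it, which is exactly the claim. Here the given symmetric subgroup $K$ is connected over $F$, hence equals $(G^\theta)^\circ$, which is a connected reductive closed subgroup of $G$ by Example \ref{ex:symmpair}; in particular $Z_K(T)=T$, because a maximal torus of a connected reductive group over a field is its own centralizer.

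First I would assemble the structure theory. Since $T$ is fixed pointwise by $\theta$, the subgroup $L=Z_G(T)$ is $\theta$-stable, and it is connected reductive, being the centralizer of a torus in a connected reductive group over an algebraically closed field. Moreover $T$ lies in the center $Z_L$, so $L^\theta=Z_{G^\theta}(T)$; a connected subgroup of $G^\theta$ lies in $(G^\theta)^\circ=K$, whence $(L^\theta)^\circ\subseteq Z_K(T)=T$, and since $T$ is connected we get $(L^\theta)^\circ=T$. Now put $M\coloneqq\sD(L)$, a $\theta$-stable semisimple group. As $M\cap T\subseteq M\cap Z_L\subseteq Z_M$ is finite and $(M^\theta)^\circ\subseteq M\cap(L^\theta)^\circ=M\cap T$, the fixed-point group $M^\theta$ is finite.

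The heart of the matter is that an involution of a nontrivial connected semisimple group over $F$ cannot have finite fixed locus. Indeed, by Lemma \ref{lem:fixedpoint} the group $M^\theta$ is smooth, so $\mathrm{Lie}(M^\theta)=0$; and by the explicit description of the fixed locus in the proof of that lemma together with $2\in F^\times$, $\mathrm{Lie}(M^\theta)$ is the $(+1)$-eigenspace of $d\theta$ on $\mathrm{Lie}(M)$, which has the $(-1)$-eigenspace as a complement. Hence $d\theta=-\id$ on $\mathrm{Lie}(M)$, and since $d\theta$ preserves the bracket, $-[x,y]=[d\theta x,d\theta y]=[x,y]$, so $2[x,y]=0$ and $\mathrm{Lie}(M)$ is abelian. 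But if $M$ were nontrivial, we could pick a maximal torus $S\subset M$, a root $\alpha$ with coroot $\alpha^\vee$, and the one-dimensional root space $\mathfrak{m}_\alpha\subseteq\mathrm{Lie}(M)$; then $[\mathrm{Lie}(\alpha^\vee(\bG_m)),\mathfrak{m}_\alpha]=\langle\alpha^\vee,\alpha\rangle\,\mathfrak{m}_\alpha=2\,\mathfrak{m}_\alpha\neq 0$ because $2\neq 0$ in $F$, contradicting abelianness. Therefore $M=\sD(L)$ is trivial, so $L=Z_G(T)$ is commutative, hence a torus by Lemma \ref{lem:toruscriterion}; being a torus that contains a maximal torus of $G$, it is a maximal torus of $G$.

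The only genuinely delicate step is the identification $\mathrm{Lie}(M^\theta)=(\mathrm{Lie}\,M)^{d\theta=1}$, where both the smoothness of the fixed locus (Lemma \ref{lem:fixedpoint}) and the invertibility of $2$ enter in an essential way; this is precisely the reason the theory is set up over $\bZ\left[1/2\right]$-algebras. Everything else is routine bookkeeping with centralizers, identity components, and derived subgroups. Before writing the final version I would double-check that the ambient facts used over $F$ --- that the centralizer of a torus in a connected reductive group is connected reductive, that a maximal torus is self-centralizing, and that the center of a semisimple group is finite --- are all available from the references already in use, in particular \cite{MR3362641}.
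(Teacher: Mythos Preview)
Your proof is correct and takes a genuinely different route from the paper's. The paper proceeds by invoking Steinberg's theorem to find a $\theta$-stable Borel subgroup $B$ and a $\theta$-stable maximal torus $H\subset B$, then chooses a regular cocharacter $\mu$ with $P_G(\mu)=B$, replaces it by $\mu+\theta\mu$ so that it factors through $K$, and uses the machinery of Lemma~\ref{lem:regcochar} and Corollary~\ref{cor:regcochar} to see that $Z_K(\mu)$ is a maximal torus of $K$; a conjugation in $K$ then identifies $Z_G(T)$ with a conjugate of $H$. Your argument instead works intrinsically with $L=Z_G(T)$: you observe $(L^\theta)^\circ=T$, pass to $M=\sD(L)$, deduce that $(M^\theta)^\circ$ is trapped in the finite center $Z_M$ and hence (by smoothness from Lemma~\ref{lem:fixedpoint}) that $\mathrm{Lie}(M^\theta)=0$, and then use the eigenspace decomposition for $d\theta$ and the bracket identity $\langle\alpha^\vee,\alpha\rangle=2$ to force $\mathrm{Lie}(M)$ to be abelian, whence $M=1$ and $L$ is a torus. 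Your approach avoids both Steinberg's theorem and the dynamic-method lemmas the paper sets up for this purpose, at the cost of a short Lie-algebra computation; the paper's approach is more constructive in that it exhibits $Z_G(T)$ explicitly as a $K$-conjugate of a chosen $H$, and it justifies the preparatory lemmas on regular cocharacters that are stated beforehand. Both arguments use the hypothesis $2\in k^\times$ in an essential way: the paper through the smoothness in Lemma~\ref{lem:fixedpoint} and the existence of $\theta$-stable Borel pairs, and you through the eigenspace splitting of $d\theta$ and the nonvanishing of $2$ in the root calculation.
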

	\begin{proof}
		Fix an involution $\theta$ of $G$ such that $K=(G^\theta)^\circ$.
		
		Choose a $\theta$-stable Borel subgroup of $G$ and a $\theta$-stable maximal torus
		$H\subset B$. Note that the existence follows from \cite{MR0230728} Theorem 7.5. In fact, we can easily see that an involution $\theta'$ of an algebraic group $G'$ over an algebraically closed field of characteristic $\neq 2$ is semisimple in the sense of \cite{MR0230728} Section 7 by embedding $G'$ into the semidirect product $G'\rtimes \bZ/2\bZ$ attached to $G'$ and $\theta'$. There exists a cocharacter $\mu\in X_\ast(H)$ such that $B = P_G(\mu)$ (see the proof of \cite{MR3362817} Proposition 2.2.9). In particular, the set $\Delta(B,H)$ of roots of $B$ with respect to $H$ is given by
		\[\Delta(B,H)=\{\alpha\in \Delta(G,H):~
		\langle\mu,\alpha\rangle>0\}.\]
		For $\alpha\in\Delta(B,H)$, we have
		\[\begin{split}
			\langle\mu+\theta\mu,\alpha\rangle
			&=\langle\mu,\alpha\rangle
			+\langle\theta\mu,\alpha\rangle\\
			&=\langle\mu,\alpha\rangle
			+\langle\mu,\theta\alpha\rangle\\
			&>0.
		\end{split}\]
		The last inequality follows since $\Delta(B,H)$ is a $\theta$-stable subset of $\Delta(G,H)$. Hence we may replace $\mu$ by $\mu+\theta\mu$ to assume that $\mu$ is $\theta$-invariant. Since $\bG_m$ is connected, $\mu$ factors through $K$. Moreover, $\mu$ is regular as a cocharacter of $K$ by Lemma \ref{lem:regcochar} and Corollary \ref{cor:regcochar}. In particular, $Z_K(\mu)$ is a maximal torus of $K$ by Lemma \ref{lem:regcochar}.
		
		Choose $g\in K(F)$ such that $gZ_K(\mu)g^{-1}=T$. It is clear that we have
		\[Z_G(g\mu)=gZ_G(\mu)g^{-1}=gHg^{-1}.\]
		Since $g\mu$ is a cocharacter to $gZ_K(\mu)g^{-1}=T$, we have $Z_G(T)\subset Z_G(g\mu)$. Since $T=gZ_K(\mu)g^{-1}\subset gZ_G(\mu)g^{-1}=gHg^{-1}$, $Z_G(gHg^{-1})\subset Z_G(T)$ holds. We combine these relations of containment with $Z_G(gHg^{-1})=gHg^{-1}$ to deduce $gHg^{-1}=Z_G(T)$. This shows that $Z_G(T)$ is a maximal torus of $G$.
	\end{proof}
	\begin{proof}[Proof of Theorem \ref{thm:fundamentalcartan}]
		The group scheme $Z_G(T)$ is smooth over $k$, and it is a closed subgroup of $G$ by \cite{MR3362641} Lemma 2.2.4. In particular, $Z_G(T)$ is a smooth affine group scheme. Moreover, $Z_G(T)$ is a reductive group scheme by \cite{MR3362641} Theorem 1.1.19 3.
		
		To prove the assertion, we may pass to geometric fibers by Lemma \ref{lem:toruscriterion} and the definition of maximal tori. We then win by Lemma \ref{lem:funcar/acfield}.
	\end{proof}
	For concrete applications in Section 4, let us show an analog of a basic property of fundamental Cartan subalgebras in representation theory of real reductive Lie groups.
	\begin{lem}\label{lem:basicoffuncar}
		Let $G$ be a reductive group scheme over $k$, $K$ be a symmetric subgroup with connected geometric fibers, and $T$ be a maximal torus of $K$. Suppose that $(G,Z_G(T))$ is split. Then for every root $\alpha$ of $G$ with respect to $Z_G(T)$, $\alpha|_T$ is nonzero.
	\end{lem}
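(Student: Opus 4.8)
The plan is to argue by contradiction using the regularity of the cocharacter that cuts out $Z_G(T)$. First I would reduce to the case where $k = F$ is an algebraically closed field: the hypothesis ``$(G, Z_G(T))$ is split'' ensures that the root $\alpha$ and its restriction $\alpha|_T$ are already defined over $k$, so it suffices to check that $\alpha|_T \neq 0$ after base change to any geometric point of $\Spec k$; and over an algebraically closed field $Z_G(T)$ is still the centralizer of a maximal torus of $K = (G^\theta)^\circ$ by Theorem \ref{thm:fundamentalcartan} (or rather Lemma \ref{lem:funcar/acfield}) and the fact that centralizers commute with base change.

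So assume $k = F$ algebraically closed, fix an involution $\theta$ with $K = (G^\theta)^\circ$, and write $H = Z_G(T)$, a maximal torus of $G$. Recall from the proof of Lemma \ref{lem:funcar/acfield} that $T = Z_K(\mu)$ for a regular cocharacter $\mu \in X_\ast(H)$ that is $\theta$-invariant; in particular $\mu$ factors through $T$, so $\mu \in X_\ast(T) \subseteq X_\ast(H)$. Now suppose for contradiction that some root $\alpha \in \Delta(G, H)$ satisfies $\alpha|_T = 0$. Since $\mu$ takes values in $T$, this forces $\langle \mu, \alpha \rangle = 0$. But $\mu$ is regular in $G$, so by Lemma \ref{lem:regcochar} (the equivalence (a) $\Leftrightarrow$ (c), applied at the geometric point $F$, with $H$ the maximal torus through which $\mu$ factors) we have $\langle \mu, \beta \rangle \neq 0$ for every root $\beta \in \Delta(G, H)$ — a contradiction. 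Hence $\alpha|_T \neq 0$.

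I do not expect a serious obstacle here; the content is essentially a repackaging of the regularity statement already proved in Lemma \ref{lem:regcochar} and the construction of $T$ in Lemma \ref{lem:funcar/acfield}. The one point requiring a little care is the descent step: one must make sure that a root $\alpha$ with $\alpha|_T = 0$ over $F$ corresponds to a root over $k$ with the same property, which is exactly what the splitness hypothesis $(G, Z_G(T))$ split delivers (the root and coroot lattices, together with the closed immersion $T \hookrightarrow Z_G(T)$, are all defined over $k$ and stable under base change). Once that bookkeeping is in place the argument is immediate.
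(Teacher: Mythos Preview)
Your argument is correct, but it takes a different route from the paper's. The paper gives a two-line proof that works directly over $k$ without any base change: writing $H = Z_G(T)$, the root $\alpha$ determines a root subgroup $U_\alpha \cong \bG_a$ on which $H$ acts through $\alpha$; if $\alpha|_T = 0$ then $T$ centralizes $U_\alpha$, so $U_\alpha \subset Z_G(T) = H$, contradicting the standard fact that $U_\alpha \cap H$ is trivial. Your approach instead passes to an algebraically closed field and invokes the regular $\theta$-invariant cocharacter $\mu$ manufactured inside the proof of Lemma~\ref{lem:funcar/acfield}; the contradiction then comes from $\langle \mu, \alpha\rangle = 0$ versus regularity (Lemma~\ref{lem:regcochar}). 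One small imprecision: the proof of Lemma~\ref{lem:funcar/acfield} produces $Z_K(\mu)$ as \emph{some} maximal torus of $K$, which is then conjugated to $T$ by an element $g \in K(F)$; so strictly you should replace $\mu$ by $g\mu$ before claiming $\mu \in X_\ast(T)$. This is harmless. The paper's proof is shorter and avoids both the reduction step and the appeal back to the internals of an earlier proof; your argument has the mild advantage of making explicit the reason---the existence of a regular cocharacter inside $T$---why the restriction map $X^\ast(H) \to X^\ast(T)$ cannot kill any root.
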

	\begin{proof}
		Write $H=Z_G(T)$. Recall that $\alpha$ determines an $H$-invariant unipotent subgroup $U_\alpha\subset G$. Moreover, $H$ acts on $U_\alpha\cong\bG_a$ by $\alpha$ (\cite{MR3362641} Theorem 4.1.4, Definition 4.2.3, Definition 5.1.1 2).
		
		Suppose that $\alpha|_T = 0$. Then we have $U_\alpha\subset Z_G(T)=H$, which contradicts to the fact that $U_\alpha\cap H$ is trivial (\cite{MR3362641} Theorem 4.1.4).
	\end{proof}
	Let $k\to k'$ be a Galois extension of commutative $\bZ\left[1/2\right]$-algebras of Galois group $\Gamma=\bZ/2\bZ$ such that $\Spec k'$ is connected. Let $G$ be a reductive group scheme over $k$. Set $K = (G^\theta)^\circ$. Let $T$ be a maximal torus $T$ of $K$. Set $H = Z_G(T)$. Suppose that $(G\otimes_k k',H\otimes_k k')$ and $(K\otimes_k k',T\otimes_k k')$ are split.
	\begin{cons}[\cite{MR1920389} Section II.5]\label{cons:simplesys}
		Fix an ordered free basis $\{\epsilon_1,\epsilon_2,\ldots,\epsilon_r\}$ of $X_\ast(T\otimes_k k')$. Then we define a positive system of $(G\otimes_k k',H\otimes_k k')$ by the lexicographic way. That is, a root $\alpha$ is positive if and only if there exists an index $i\in\{1,2,\ldots,r\}$ such that $\langle \epsilon_i,\alpha\rangle>0$ and $\langle \epsilon_l,\alpha\rangle=0$ for $1\leq l\leq i-1$. Note that
		this determines a positive system in fact since $\langle \epsilon_i,\alpha\rangle$ is nonzero for at least one index $i\in\{1,2,\ldots,r\}$ by Lemma \ref{lem:basicoffuncar}.
	\end{cons}
	\begin{prop}\label{prop:conj=-1}
		Suppose that the Galois action on $X^\ast(T\otimes_k k')$ is $-1$.
		\begin{enumerate}
			\renewcommand{\labelenumi}{(\arabic{enumi})}
			\item We have $\bar{\Pi}=-\Pi$.
			\item The element $w$ of the Weyl group attached to the nontrivial element of $\Gamma$ is the longest element with respect to the above positive system.
			\item We have $\bar{w}=w$ in the Weyl group.
			\item If we take a representative of $w$ in $N_{G(k')}(T\otimes_k k')$ which we denote by the
			same symbol $w$, then $\bar{w}w$ belongs to $T(k')$.
		\end{enumerate}
	\end{prop}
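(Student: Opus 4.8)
Throughout I write $\sigma$ for the nontrivial element of $\Gamma=\bZ/2\bZ$ (also $\bar{\phantom{x}}$), put $\Delta=\Delta(G\otimes_k k',H\otimes_k k')$, and let $\Delta^+$ be the positive system of Construction~\ref{cons:simplesys}, $\Delta^-=-\Delta^+$, and $\Pi$ the simple system. By Theorem~\ref{thm:fundamentalcartan}, $H=Z_G(T)$ is a maximal torus of $G$ defined over $k$, so $\Gamma$ acts on $X^\ast(H\otimes_k k')$ preserving $\Delta$ and on $X_\ast(H\otimes_k k')$, the canonical pairing is $\Gamma$-equivariant, and the maps $X^\ast(H\otimes_k k')\to X^\ast(T\otimes_k k')$ and $X_\ast(T\otimes_k k')\to X_\ast(H\otimes_k k')$ induced by $T\subseteq H$ are $\Gamma$-equivariant; by hypothesis $\overline{\epsilon_i}=-\epsilon_i$ for each $i$. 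For (1) I would argue: given $\alpha\in\Delta^+$, let $i$ be least with $\langle\epsilon_i,\alpha\rangle\neq0$, so $\langle\epsilon_i,\alpha\rangle>0$; equivariance of the pairing and $\overline{\epsilon_l}=-\epsilon_l$ give $\langle\epsilon_l,\overline{\alpha}\rangle=-\langle\epsilon_l,\alpha\rangle$ for all $l$, so $i$ is again least with $\langle\epsilon_i,\overline{\alpha}\rangle\neq0$ and there the value is negative, hence $\overline{\alpha}\in\Delta^-$. Thus $\sigma$ is a bijection $\Delta^+\xrightarrow{\ \sim\ }\Delta^-$, so $\alpha\mapsto-\overline{\alpha}$ is an automorphism of the root system $\Delta$ fixing $\Delta^+$, hence fixing its set $\Pi$ of indecomposable elements; therefore $-\overline{\Pi}=\Pi$, that is $\overline{\Pi}=-\Pi$.

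For (2): $w\in W(G,H)(k')$ is by definition the unique element with $w\Pi={}^\sigma\Pi=\overline{\Pi}=-\Pi$ (by (1)); since $-\Pi$ is the simple system of $\Delta^-$, $w$ carries $\Delta^+$ onto $\Delta^-$, so $w=w_0$. For (3): by (1) also $\overline{\Delta^-}=\Delta^+$, so $\overline{w_0}(\Delta^-)=\overline{w_0(\Delta^+)}=\overline{\Delta^-}=\Delta^+$, i.e.\ $\overline{w_0}$ carries $\Delta^+$ onto $\Delta^-$, whence $\overline{w_0}=w_0$ by uniqueness of the longest element; equivalently, the cocycle relation $w_{\sigma\tau}=\sigma(w_\tau)w_\sigma$ at $\tau=\sigma$ gives $\overline{w}w=e$, so $\overline{w}=w^{-1}=w$ as $w=w_0$ is an involution.

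For (4) I would first note that $\theta$ fixes each $\epsilon_i\in X_\ast(T\otimes_k k')$, so $\langle\epsilon_i,\theta\alpha\rangle=\langle\epsilon_i,\alpha\rangle$ for all roots and $\Delta^+$ is $\theta$-stable; hence $\theta$ normalizes $W(G,H)(k')$ and $\theta w_0\theta^{-1}$ again carries $\Delta^+$ to $\Delta^-$, so $\theta w_0\theta^{-1}=w_0$. Since $T=(H^\theta)^\circ$ and $\bG_m$ is connected, $X_\ast(T\otimes_k k')=X_\ast(H\otimes_k k')^\theta$ is $w_0$-stable, so conjugation by any representative of $w_0$ preserves $T$ and a representative $w\in N_{G(k')}(T\otimes_k k')$ exists. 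For such $w$, the images of $w$ and (by (3)) of $\overline{w}$ in $W(G,H)(k')$ equal $w_0$, so $\overline{w}w$ maps to $w_0^2=e$, i.e.\ $\overline{w}w\in Z_G(T)(k')=H(k')$. It then remains to show $\chi(\overline{w}w)=1$ for every $\chi\in X^\ast(H/T\otimes_k k')=\ker\!\big(X^\ast(H\otimes_k k')\to X^\ast(T\otimes_k k')\big)$. I would do this using that $\theta$ acts by $-1$ on $X^\ast(H/T\otimes_k k')$ (its $\bQ$-span is the $\theta$-antiinvariant subspace, complementary to that of $X^\ast(T\otimes_k k')$), and choosing $w$ to be the Tits representative $\dot w_0$ of a pinning with maximal torus $H$ and Borel in $\Delta^+$, for which $w^2=(2\rho^\vee)(-1)$ with $2\rho^\vee=\sum_{\alpha\in\Delta^+}\alpha^\vee\in X_\ast(H\otimes_k k')^\theta=X_\ast(T\otimes_k k')$, so $w^2\in T(k')$ and the claim reduces to $\overline{w}w^{-1}\in T(k')$; the element $\overline{w}w^{-1}$ measures the discrepancy between the Galois-conjugate pinning (whose Borel lies in $\overline{\Delta^+}=\Delta^-$) and the $\dot w_0$-transport of the original one, and its values on $X^\ast(H/T\otimes_k k')$ are controlled by the $\theta$-action on $H/T$. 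I expect this last step — promoting $\overline{w}w\in H(k')$ to $\overline{w}w\in T(k')$ — to be the main obstacle: parts (1)–(3) are formal consequences of the lexicographic construction and of the characterization of the longest element, while (4) uses the geometric content that the \emph{fundamental} Cartan has no nontrivial compact $\theta$-split subtorus, and the bookkeeping of the Tits lift against the conjugate pinning is delicate.
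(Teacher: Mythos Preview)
Your arguments for (1)--(3) are correct and coincide with the paper's approach, just written out in detail: the paper's proof of these parts is literally ``Part (1) follows by the definition of our positive system. Part (2) immediately follows from (1). Since $\bar{w}(\Pi)=-\Pi$ by (1), (3) holds.''

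For (4), you correctly obtain $\bar w w\in H(k')$, and that is exactly what the paper proves. Its entire argument is the single sentence ``Part (4) follows by (3) and the definition of the Weyl group scheme,'' and since the only Weyl group scheme in play is $W(G,H)=N_G(H)/H$, this yields $\bar w w\in H(k')$, not $T(k')$. The occurrence of $T$ in place of $H$ in the statement of (4) appears to be a slip: the element $w$ is introduced in Section~2.1 as a representative in $N_{G(k')}(H\otimes_k k')$; Theorem~F and Step~7 of the programme in Section~4.1 explicitly place $w$ in the normalizer of $H$; and every application of (4) in Section~4 consists of evaluating $\lambda(\bar w w)$ for $\lambda\in X^\ast(H\otimes_k k')$, which only requires $\bar w w\in H(k')$. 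Thus your proof of (4) is already complete at the line ``$\overline{w}w\in Z_G(T)(k')=H(k')$''; the subsequent attempt to promote membership from $H(k')$ to $T(k')$ via Tits lifts is aiming at a claim stronger than the paper either intends or proves, and---as you yourself observe---would require a genuinely different and more delicate argument.
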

	\begin{proof}
		Part (1) follows by the definition of our positive system. Part (2) immediately follows from (1). Since $\bar{w}(\Pi)=-\Pi$ by (1), (3) holds. Part (4) follows by (3) and the definition of the Weyl group scheme.
	\end{proof}
	The above observation is quite helpful when we study the descent property of line bundles on partial flag schemes of standard models of classical Lie groups (see Section 4). We will see examples of $T$ satisfying the condition in Proposition \ref{prop:conj=-1} in Section 3.2 and 3.4.
	
	A further assumption sometimes makes it easy to find the lift $w$:
	\begin{prop}
		Consider the setting of Proposition \ref{prop:conj=-1}. Suppose that the Weyl group $W(K,T)(k')$ contains $-1$. Let $w\in K(k')$ be its lift. Then $w$ represents the longest element of $W(G,H)(k')$.
	\end{prop}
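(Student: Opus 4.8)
The plan is to prove that $w$, regarded as an element of $W(G,H)(k')=W(G\otimes_k k',H\otimes_k k')(k')$ (which makes sense once we check that $w$ normalizes $H\otimes_k k'$), sends every positive root of $(G\otimes_k k',H\otimes_k k')$ to a negative one, where positivity refers to the lexicographic system fixed in Construction \ref{cons:simplesys} and Proposition \ref{prop:conj=-1}. Since the longest element is the unique element of a Weyl group carrying all positive roots to negative roots, this will suffice.

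First I would record the two elementary facts about $w$. As a lift of $-1\in W(K,T)(k')$, the element $w$ lies in $N_K(T\otimes_k k')(k')$, so it normalizes $T\otimes_k k'$ and hence normalizes $H\otimes_k k'=Z_{G\otimes_k k'}(T\otimes_k k')$; thus $w$ determines an element of $W(G,H)(k')$. Moreover, for any $\alpha\in X^\ast(H\otimes_k k')$ the character $w\alpha$ is given by $(w\alpha)(t)=\alpha(w^{-1}tw)$, and restricting along $T\otimes_k k'\hookrightarrow H\otimes_k k'$ and using that $w^{-1}tw\in T\otimes_k k'$ together with the fact that $w$ acts on $X^\ast(T\otimes_k k')$ as $-1$, we obtain $(w\alpha)|_{T\otimes_k k'}=-\,(\alpha|_{T\otimes_k k'})$.

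Now I would bring in the two inputs from Section 3.1. By Lemma \ref{lem:basicoffuncar} every root $\alpha$ of $(G\otimes_k k',H\otimes_k k')$ satisfies $\alpha|_{T\otimes_k k'}\neq 0$, and by Construction \ref{cons:simplesys} a root $\alpha$ is positive exactly when the first nonzero entry of $(\langle\epsilon_i,\alpha|_{T\otimes_k k'}\rangle)_{1\le i\le r}$ is positive, where $\{\epsilon_1,\dots,\epsilon_r\}$ is the fixed ordered basis of $X_\ast(T\otimes_k k')$. For a positive root $\alpha$, the analogous sequence for $w\alpha$ is the negative of the one for $\alpha$ by the formula of the previous paragraph, so its first nonzero entry is negative; hence $w\alpha$ is negative. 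Therefore $w$ maps the positive system into the negative roots, equivalently it carries the base $\Pi$ onto the unique simple system contained in the negative roots, namely $-\Pi$, and so $w$ represents the longest element of $W(G,H)(k')$. The step I expect to require the most care is the compatibility asserted in the second paragraph — that restriction of characters along $T\otimes_k k'\hookrightarrow H\otimes_k k'$ turns the Weyl action of $w$ on $X^\ast(H\otimes_k k')$ into multiplication by $-1$ on $X^\ast(T\otimes_k k')$ — together with the routine verification that $w$ normalizes $H\otimes_k k'$; the remainder is just the standard characterization of the longest element of a Weyl group.
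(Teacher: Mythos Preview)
Your proof is correct and follows essentially the same approach as the paper: both show that $w$ sends every positive root to a negative one by exploiting that $w$ acts as $-1$ on $X_\ast(T\otimes_k k')$ (equivalently on $X^\ast(T\otimes_k k')$), so that $\langle\epsilon_i,w\alpha\rangle=\langle w^{-1}\epsilon_i,\alpha\rangle=-\langle\epsilon_i,\alpha\rangle$. Your explicit verification that $w$ normalizes $H\otimes_k k'$ (via $H=Z_G(T)$ and $w\in N_K(T)$) is a detail the paper leaves implicit.
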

	\begin{proof}
		It will suffice to show that $w$ sends $\Delta^+(G,H)$ to $-\Delta^+(G,H)$. For $\alpha\in\Delta^+(G,H)$ and $i\in\{1,2,\ldots,r\}$, we have
		\[\langle\epsilon_i,w\alpha\rangle
		=\langle w^{-1}\epsilon_i,\alpha\rangle
		=-\langle\epsilon_i,\alpha\rangle.\]
		Hence $w\alpha$ is a negative root by the definition of our positive system. This completes the proof.
	\end{proof}
	\subsection{Examples of nonsplit tori}
	The aim of this section is four-fold:
	\begin{enumerate}
		\item Supply examples $T$ of non-split tori over commutative rings $k$.
		\item Find Galois extensions $k'$ of $k$ such that $T\otimes_k k'$ are split. 
		\item Compute the Galois action on $X^\ast(T\otimes_k k')$.
		\item ``Express'' the Galois actions in terms of matrices for some of $T$.
	\end{enumerate}
	The aim 3 is for showcases of the computation of the Galois action of $X^\ast(H\otimes_k k')$ for the maximal tori $H$ of models of classical Lie groups which we will list in the next section. We work with 4 in order to give key computations to find the lift $w$ of Proposition \ref{prop:conj=-1} (2).
	
	The first example is the $\bZ\left[1/2\right]$-form of the compact torus of rank $1$ with three different realizations. One is $\SO(2)$. In fact, we have an isomorphism $\mu_2:\bG_m\cong\SO(2)$ over $\bZ\left[1/2,\sqrt{-1}\right]$ which is defined by
	\[\mu_2(a)=g_2 \left(\begin{array}{cc}
		a&0\\
		0&a^{-1}
	\end{array}\right)g_2^{-1}=\left(\begin{array}{cc}
		\frac{a+a^{-1}}{2}&-\frac{a-a^{-1}}{2\sqrt{-1}}\\
		\frac{a-a^{-1}}{2\sqrt{-1}}&\frac{a+a^{-1}}{2}
	\end{array}\right)\]
	for every $\bZ\left[1/2,\sqrt{-1}\right]$-algebra $A'$ and unit $a$ of $A'$. It is clear that $\bar{\mu}_2(a)=\mu_2(a)^{-1}$. This shows that the conjugation action on \[X^\ast(\SO(2)\otimes_{\bZ\left[1/2\right]} \bZ\left[1/2,\sqrt{-1}\right])\cong\bZ\]
	is equal to $-1$. We also have $w_2\mu_2=-\mu_2$. Note that $w_2\in\Oo(2,\bZ)\setminus\SO(2,\bZ)$.
	
	The second realization is $\Uu(1)$. In fact,
	\[a\otimes 1+b\otimes\sqrt{-1}\mapsto\left(\begin{array}{cc}
		a & -b \\ 
		b & a
	\end{array} \right)\]
	determines an isomorphism $\Uu(1)\cong\SO(2)$ over $\bZ\left[1/2\right]$. We denote the composition $\Uu(1)\otimes_{\bZ\left[1/2\right]} \bZ\left[1/2,\sqrt{-1}\right]\to\bG_m$ of the base change of the inverse of this isomorphism to $\bZ\left[1/2,\sqrt{-1}\right]$ with $\mu_2$ by the same symbol $\mu_2$. That is, we have
	\[\mu_2(a)=\frac{a+a^{-1}}{2}\otimes 1+\frac{a-a^{-1}}{2\sqrt{-1}}\otimes\sqrt{-1}.\]
	
	The third realization is simply given by
	\[A\mapsto \left\{\left(\begin{array}{cc}
		a & 0 \\ 
		0 & a^{-1}
	\end{array} \right)\in \Res_{\bZ\left[1/2,\sqrt{-1}\right]/\bZ\left[1/2\right]}\SL_2:~a\in \Uu(1,A)\right\}\cong \Uu(1,A).\]
	Note that we have
	\[\left(\begin{array}{cc}
		0 & 1 \\ 
		1 & 0
	\end{array} \right)\left(\begin{array}{cc}
		a & 0 \\ 
		0 & a^{-1}
	\end{array} \right)\left(\begin{array}{cc}
		0 & 1 \\ 
		1 & 0
	\end{array} \right)=\left(\begin{array}{cc}
		a^{-1} & 0 \\ 
		0 & a
	\end{array} \right).\]
	
	The second example is the torus
	\begin{flalign*}
		T=&\left\{\left(\begin{array}{cccccc}
			a_1 &  &  & -b_1 &  &  \\
			& \ddots &  &  & \ddots &  \\
			&  & a_n &  &  & -b_n \\
			b_1 &  &  & a_1 &  &  \\
			& \ddots &  &  & \ddots &  \\
			&  & b_n &  &  & a_n
		\end{array}
		\right)\in\GL_{2n}:~a_i^2+b_i^2=1{\rm \ for\ }1\leq i\leq n
		\right\}\\
		&\cong\Uu(1)^n
	\end{flalign*}
	over $\bZ\left[1/2\right]$ for a positive integer $n$. In fact, the conjugation by $g_{2n}'$ determines an isomorphism from $T\otimes_{\bZ\left[1/2\right]} \bZ\left[1/2,\sqrt{-1}\right]$ to the split maximal torus of diagonal matrices in $\Sp_n$ or $\SO_{2n}'$. The above computation shows that the conjugation action on \[X^\ast(T\otimes_{\bZ\left[1/2\right]} \bZ\left[1/2,\sqrt{-1}\right])\cong\bZ^n\]
	is equal to $-1$.
	
	The final example is obtained by the Weil restriction. Fix a Galois extension $k\to k'$ of Galois group $\Gamma$ and a split torus $H'$ over $k'$.
	\begin{lem}\label{lem:galtwistofreschar}
		Let $\sigma\in\Gamma$, $A'$ be a commutative $k'$-algebra, and $X'$ be a $k'$-scheme.
		\begin{enumerate}
			\renewcommand{\labelenumi}{(\arabic{enumi})}
			\item Define a map $\delta_{A'}:A'\otimes_k k'\to \prod_{\tau\in\Gamma} {}^{\tau^{-1}}A'$ by $a\otimes c\mapsto (a\tau(c))$. Then $\delta_{A'}$ is a $k'$-algebra isomorphism. Here $A'\otimes_k k'$ is regarded as a $k'$-algebra for the canonical homomorphism from $k'$ onto the right factor $k'$.
			\item We have a natural isomorphism $\phi:(\Res_{k'/k} X')\otimes_k k'\to \prod_{\tau} {}^\tau X'$.
			\item The diagram 
			\[\begin{tikzcd}
				((\Res_{k'/k} X')\otimes_k k')(A')
				\ar[r, "\phi_{A'}"]
				&\prod_\tau {}^\tau X'\ar[d, "s_{\sigma,A'}"]\\
				({}^\sigma((\Res_{k'/k} X')\otimes_k k'))(A')
				\ar[r, "{}^\sigma\phi_{A'}"]\ar[u, equal]
				&\prod_\tau {}^{\sigma\tau}X'(A')
			\end{tikzcd}\]
			commutes.
		\end{enumerate}
	\end{lem}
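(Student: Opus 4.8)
All three assertions are formal consequences of the defining property of the Galois extension, the content being concentrated in part (1); parts (2) and (3) then follow by feeding (1) through the functor of points of $X'$. So first I would construct $\delta_{A'}$ as a base change of the structural isomorphism $\gamma\colon k'\otimes_k k'\xrightarrow{\ \sim\ }\prod_{\tau\in\Gamma}k'$, $a\otimes b\mapsto(a\tau(b))_\tau$, of the Galois extension, regarding $k'\otimes_k k'$ as a $k'$-algebra through its left tensor factor and $\prod_\tau k'$ diagonally. Via the canonical identification $A'\otimes_k k'\cong A'\otimes_{k'}(k'\otimes_k k')$ — in which $A'\otimes_{k'}(-)$ contracts the $k'$-module structure of $A'$ against that left factor — one checks on simple tensors that $\delta_{A'}$ coincides with $\mathrm{id}_{A'}\otimes_{k'}\gamma$ once $A'\otimes_{k'}\prod_\tau k'$ is identified with $\prod_\tau{}^{\tau^{-1}}A'$ (the $\tau$-th slot being $A'$ with its $k'$-structure precomposed by $\tau$, i.e. ${}^{\tau^{-1}}A'$ in the convention fixed above for ${}^{\sigma}(-)$). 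The same unwinding shows directly that $\delta_{A'}$ is a $k'$-algebra homomorphism for the stated structures; since $\gamma$ is a $k'$-algebra isomorphism with respect to its left factor, so is $\mathrm{id}_{A'}\otimes_{k'}\gamma$, whence $\delta_{A'}$ is an isomorphism, manifestly natural in $A'$.

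\textbf{Part (2).} For a commutative $k'$-algebra $A'$, the definitions of base change and of Weil restriction give natural identifications
\[
((\Res_{k'/k}X')\otimes_k k')(A')=(\Res_{k'/k}X')(A')=X'(A'\otimes_k k'),
\]
the middle term viewing $A'$ as a $k$-algebra and the last term using the $k'$-algebra structure on $A'\otimes_k k'$ from part (1). Applying $X'(-)$ to $\delta_{A'}$, using that the scheme $X'$ carries the finite product of rings $\prod_\tau{}^{\tau^{-1}}A'$ to the product of point sets $\prod_\tau X'({}^{\tau^{-1}}A')$, and identifying $X'({}^{\tau^{-1}}A')=({}^{\tau}X')(A')$ by the definition of the Galois twist, I obtain a bijection $\phi_{A'}\colon((\Res_{k'/k}X')\otimes_k k')(A')\xrightarrow{\ \sim\ }\prod_\tau({}^{\tau}X')(A')$. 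Each step is natural in $A'$ — the only non-formal input, naturality of $\delta_{A'}$, was recorded above — so the $\phi_{A'}$ assemble into an isomorphism of $k'$-spaces $\phi\colon(\Res_{k'/k}X')\otimes_k k'\xrightarrow{\ \sim\ }\prod_\tau{}^\tau X'$, hence of $k'$-schemes by Yoneda since $\prod_\tau{}^\tau X'$ is a scheme; naturality in $X'$ is clear from the construction.

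\textbf{Part (3), and the main obstacle.} Since $\sigma$ restricts to the identity on $k$, for any $k$-scheme $Y$ there is a canonical identification ${}^\sigma(Y\otimes_k k')=Y\otimes_k k'$ which is the identity on points; taking $Y=\Res_{k'/k}X'$ gives the unlabelled vertical equality in the diagram. Granting it, commutativity of the square is a diagram chase: evaluating at $A'$ and transporting everything through part (2), one reduces to the identity $\mathrm{pr}_\tau\circ\delta_{{}^{\sigma^{-1}}A'}=\mathrm{pr}_{\sigma\tau}\circ\delta_{A'}$ of ring homomorphisms $A'\otimes_k k'\to{}^{(\sigma\tau)^{-1}}A'$, which is immediate from $\delta_{A'}(a\otimes c)=(a\tau(c))_\tau$ once one observes that the twisted $k'$-structure on ${}^{\sigma^{-1}}A'$ turns the ``$\tau$-multiplication'' in the $\tau$-th slot into ``$\sigma\tau$-multiplication''. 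The one place throughout where genuine care is required — and what I expect to be the main, if modest, obstacle — is precisely this bookkeeping: keeping the ``precompose with $\sigma^{-1}$'' convention for Galois twists consistent across $\delta_{A'}$, the twists ${}^{\tau}X'$ of part (2), and the reindexing maps $s_\sigma$ of part (3), so that all twist directions match on the nose. Once the conventions are pinned down, no step needs more than an unwinding of definitions.
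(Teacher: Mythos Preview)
Your proposal is correct and follows essentially the same approach as the paper: both reduce (2) to (1) by computing $((\Res_{k'/k}X')\otimes_k k')(A')=X'(A'\otimes_k k')$ and then pushing $\delta_{A'}$ through $X'(-)$, and both prove (3) by first checking the ring-level square $s_\sigma\circ\delta_{A'}=\delta_{{}^{\sigma^{-1}}A'}$ and then applying $X'$. Your treatment of (1) via $\delta_{A'}=\mathrm{id}_{A'}\otimes_{k'}\gamma$ is a bit more explicit than the paper's one-line ``follows by definitions'', but the content is the same.
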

	\begin{proof}
		Part (1) follows by definitions. Part (2) immediately follows from (1). In fact, we have
		\[\begin{split}
			((\Res_{k'/k} X')\otimes_k k')(A')
			&=X'(A'\otimes_k k')\\
			&\cong X'(\prod_\tau {}^{\tau^{-1}}A')\\
			&\cong \prod_\tau X'({}^{\tau^{-1}}A')\\
			&=\prod_\tau {}^\tau X'(A').
		\end{split}\]
		
		For (3), observe that the diagram of $k'$-algebras
		\[\begin{tikzcd}
			A'\otimes_k k'\ar[r, "\delta_{A'}"]\ar[d, equal]
			&\prod_\tau {}^{\tau^{-1}} A'\ar[d, "s_\sigma"]\\
			{}^{\sigma^{-1}} A'\otimes_k k'
			\ar[r, "\delta_{{}^{\sigma^{-1}} A'}"]
			&\prod {}^{\tau^{-1}\sigma^{-1}} A'
		\end{tikzcd}\]
		commutes. Apply $X'$ to get
		\[\begin{tikzcd}
			X'(A'\otimes_k k')\ar[rr, "X'(\delta_{A'})"]\ar[d, equal]
			&&X'(\prod_\tau {}^{\tau^{-1}} A')\ar[d]\ar[r, equal]
			&\prod_\tau {}^\tau X'(A')\ar[d, "s_{\sigma,A'}"]\\
			X'({}^{\sigma^{-1}} A'\otimes_k k')
			\ar[rr, "X'(\delta_{{}^{\sigma^{-1}} A'})"]
			&&X'(\prod {}^{\tau^{-1}\sigma^{-1}} A')\ar[r, equal]
			&\prod {}^{\sigma\tau}X'(A').
		\end{tikzcd}\]
		The assertion now follows by unwinding the definitions.
	\end{proof}
	By this observation, $\Res_{k'/k} H'$ is a torus which is split over $k'$.
	\begin{prop}\label{prop:galactionofchargrprestorus}
		Identify $X^\ast((\Res_{k'/k} H')\otimes_k k')$ with $\prod_{\tau} X^\ast({}^\tau H')$ by Lemma \ref{lem:galtwistofreschar}. Then the induced Galois action on $\prod_{\tau} X^\ast({}^\tau H')$ is given by
		\[{}^\sigma(\lambda_\tau)=({}^\sigma\lambda_{\sigma^{-1}\tau}),\]
		where $\sigma\in\Gamma$ and $(\lambda_\tau)\in \prod_{\tau} X^\ast({}^\tau H')$.
	\end{prop}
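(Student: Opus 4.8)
The plan is to chase the definitions, with Lemma \ref{lem:galtwistofreschar} as the only real input. First recall the general mechanism: for a group $k$-scheme $Z$, the Galois action of $\sigma\in\Gamma$ on $X^\ast(Z\otimes_k k')$ is built from the canonical descent isomorphism $\gamma_\sigma\colon{}^\sigma(Z\otimes_k k')\xrightarrow{\sim}Z\otimes_k k'$ (which exists because $Z$, and likewise $\bG_m$, is defined over $k$), via the rule $\sigma\cdot\lambda=({}^\sigma\lambda)\circ\gamma_\sigma^{-1}$, where ${}^\sigma\lambda\in X^\ast({}^\sigma(Z\otimes_k k'))$ is the base change of $\lambda$ and we use the canonical trivialization ${}^\sigma\bG_m=\bG_m$.

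So the first step is to apply this with $Z=\Res_{k'/k}H'$ and transport everything along the isomorphism $\phi$ of Lemma \ref{lem:galtwistofreschar}(2), which identifies $X^\ast((\Res_{k'/k}H')\otimes_k k')$ with $X^\ast\bigl(\prod_\tau{}^\tau H'\bigr)=\bigoplus_\tau X^\ast({}^\tau H')=\prod_\tau X^\ast({}^\tau H')$ (the sum being finite). The commutative square of Lemma \ref{lem:galtwistofreschar}(3) says precisely that, under $\phi$, the map $\gamma_\sigma^{-1}\colon(\Res_{k'/k}H')\otimes_k k'\to{}^\sigma((\Res_{k'/k}H')\otimes_k k')$ corresponds to $s_\sigma\colon\prod_\tau{}^\tau H'\to\prod_\tau{}^{\sigma\tau}H'={}^\sigma\bigl(\prod_\tau{}^\tau H'\bigr)$; here I also use that ${}^\sigma(-)$ commutes with finite products and that ${}^\sigma({}^\tau H')$ and ${}^{\sigma\tau}H'$ are canonically identified. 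Hence the transported action is $\lambda\mapsto({}^\sigma\lambda)\circ s_\sigma=s_\sigma^\ast({}^\sigma\lambda)$.

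It then remains to evaluate $\sigma\cdot\lambda=s_\sigma^\ast({}^\sigma\lambda)$ for $\lambda$ corresponding to $(\lambda_\tau)_\tau\in\prod_\tau X^\ast({}^\tau H')$. Since ${}^\sigma(-)$ commutes with products, ${}^\sigma\lambda$ corresponds to the tuple whose $\tau$-th entry is ${}^\sigma\lambda_\tau\in X^\ast({}^\sigma({}^\tau H'))=X^\ast({}^{\sigma\tau}H')$. Precomposing with $s_\sigma$ and using that $s_\sigma$ sends a point $(x_\tau)_\tau$ to the tuple with $\tau$-th entry $x_{\sigma\tau}$, one finds that $\sigma\cdot\lambda$ sends $(x_\rho)_\rho$ to $\prod_\tau({}^\sigma\lambda_\tau)(x_{\sigma\tau})=\prod_\rho({}^\sigma\lambda_{\sigma^{-1}\rho})(x_\rho)$ after the substitution $\rho=\sigma\tau$. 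Thus the $\rho$-th component of $\sigma\cdot(\lambda_\tau)_\tau$ is ${}^\sigma\lambda_{\sigma^{-1}\rho}\in X^\ast({}^\rho H')$, which is the asserted formula; that this defines a left action follows from $(\sigma_1\sigma_2)^{-1}=\sigma_2^{-1}\sigma_1^{-1}$, consistent with its having been transported from the $\Gamma$-action.

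The step I expect to need the most care is the index bookkeeping in Lemma \ref{lem:galtwistofreschar}(3): one must be sure whether $\gamma_\sigma$ or its inverse is the map that transports to $s_\sigma$, and correspondingly whether the induced rule on characters is $s_\sigma^\ast\circ{}^\sigma(-)$ or a variant, and onto which factor the twist ${}^\sigma(-)$ is absorbed. I would pin this down by tracing $A'$-points through the two squares appearing in the proof of that lemma (the isomorphism $\delta_{A'}$ of part (1) and its compatibility with $s_\sigma$); as an independent sanity check, the formula specializes correctly to $k=\bR$, $k'=\bC$, $H'=\bG_{m,\bC}$, where $\Res_{\bC/\bR}\bG_m$ has character lattice $\bZ[\Gamma]$ with $\Gamma$ acting by the regular permutation, matching $({}^\sigma\lambda_{\sigma^{-1}\tau})_\tau$.
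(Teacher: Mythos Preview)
Your proof is correct and follows essentially the same route as the paper's: both invoke Lemma~\ref{lem:galtwistofreschar}(3) to transport the canonical descent datum on $(\Res_{k'/k}H')\otimes_k k'$ to the shift $s_\sigma$ on $\prod_\tau{}^\tau H'$, and then unwind the resulting index permutation on characters. The only cosmetic difference is that the paper extracts the $\tau'$-th component of ${}^\sigma\Lambda$ by precomposing with the factor inclusion $i_{\tau'}\colon{}^{\tau'}H'\hookrightarrow\prod_\tau{}^\tau H'$, whereas you evaluate on full tuples of points and reindex via $\rho=\sigma\tau$; these are dual bookkeeping devices for the same computation.
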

	\begin{proof}
		Let $(\lambda_\tau)\in \prod_{\tau} X^\ast({}^\tau H')$, and $\Lambda\in X^\ast((\Res_{k'/k} H')\otimes_k k')$ be the corresponding element. For $\tau'\in\Gamma$, let $i_{\tau'}:{}^{\tau'} H'\to \prod_{\tau} {}^\tau H'$ be the canonical homomorphism. The assertion follows from the equality ${}^\sigma\Lambda\circ \phi^{-1}\circ i_{\tau'}={}^\sigma (\Lambda\circ\phi^{-1}\circ i_{\sigma^{-1}\tau'})={}^\sigma\lambda_{\sigma^{-1}\tau}$ which holds by Lemma \ref{lem:galtwistofreschar} (3):
		\[\begin{tikzcd}
			H'({}^{(\tau')^{-1}}A')\ar[r, "i_{\tau', A'}"]
			\ar[rd, "{}^\sigma i_{\sigma^{-1}\tau', A'}"']
			&\prod_{\tau} H'({}^{\tau^{-1}}A')
			\ar[d, "s_\sigma"]\ar[r, "\phi^{-1}"]
			&H'(A'\otimes_k k')\\
			&\prod_\tau H'({}^{\tau^{-1}\sigma^{-1}} A')
			\ar[r, "{}^\sigma\phi^{-1}"]
			&H'({}^{\sigma^{-1}}A'\otimes_k k')\ar[u, equal]
			\ar[r, "\Lambda_{{}^{\sigma^{-1}} A'}"]
			&\bG_m(A').
		\end{tikzcd}\]
	\end{proof}
	\subsection{Standard models of classical Lie groups}
	We list standard $\bZ\left[1/2\right]$-forms of classical connected Lie groups and maximal compact subgroups. We follow \cite{kobayashioshima} section 7.1 to avoid using quaternions in Example \ref{ex:modelofSp(p,q)}. Let $n$ be a positive integer, and $p,q$ be nonnegative integers such that $p+q=n$.
	\begin{ex}[$\GL_n(\bR)^+$]
		We have symmetric pairs
		\[\begin{array}{ll}
			(\GL_n,\Oo(n))&(\SL_n,\SO(n))
		\end{array}\]
		over $\bZ\left[1/2\right]$ by definitions. Since $\Oo(n)^\circ=\SO(n)$, $(\GL_n,\SO(n))$ is also a symmetric pair.
	\end{ex}
	\begin{ex}[$\Uu(p,q)$]\label{ex:modelofupq}
		Define an involution $\tau_{p,q}^c$ on $\Res_{\bZ\left[1/2,\sqrt{-1}\right]/\bZ\left[1/2\right]}\GL_{p+q}$ by $g\mapsto I_{p,q}(g^\ast)^{-1}I_{p,q}$. Set
		\[\Uu(p,q)=(\Res_{\bZ\left[1/2,\sqrt{-1}\right]/\bZ\left[1/2\right]}\GL_{p+q})^{\tau_{p,q}^c}.\]
		That is,
		\[\Uu(p,q,A)=\{g\in\GL_{p+q}(A\otimes_{\bZ}\bZ\left[1/2\right]):\ g^\ast I_{p,q}g=I_{p,q}\},\]
		where $A$ runs through all commutative $\bZ\left[1/2\right]$-algebras. Then we have \[\Uu(p,q)\otimes_{\bZ\left[1/2\right]}\bZ\left[1/2,\sqrt{-1}\right]\cong\GL_{p+q};\]
		\[a\otimes 1+b\otimes\sqrt{-1}\mapsto a+\sqrt{-1}b\]
		\[\frac{g+I_{p,q}(g^T)^{-1}I_{p,q}}{2}\otimes 1+\frac{g-I_{p,q}(g^T)^{-1}I_{p,q}}{2\sqrt{-1}}\otimes \sqrt{-1}\leftmapsto g.\]
		In particular, $\Uu(p,q)$ is a reductive group scheme over $\bZ\left[1/2\right]$. Set $\Uu(n)=\Uu(n,0)$. The group scheme $\Uu(p,q)$ is closed under formation of $(-)^\ast$. Moreover, the block diagonal embedding $K=\Uu(p)\times\Uu(q)\hookrightarrow\Uu(p,q)$ identifies $K$ with the symmetric subgroup $\Uu(p,q)\cap\Uu(n)$. 
	\end{ex}
	\begin{ex}[$\GL_n(\bC)$]
		We have a symmetric pair \[(\Res_{\bZ\left[1/2,\sqrt{-1}\right]/\bZ\left[1/2\right]}\GL_n,\Uu(n))\]
		by definitions.
	\end{ex}
	For an affine group scheme $G$ over $\bZ\left[1/2\right]$, define an involution $\bar{\ }$ on
	\[\Res_{\bZ\left[1/2,\sqrt{-1}\right]/\bZ\left[1/2\right]} G\]
	by the conjugation of $\bZ\left[1/2,\sqrt{-1}\right]$.
	\begin{ex}[$\Uu^\ast(2n)$]
		Define a symmetric subgroup \[\Uu^\ast(2n)\subset\Res_{\bZ\left[1/2,\sqrt{-1}\right]/\bZ\left[1/2\right]}\GL_{2n}\]
		by the involution $g\mapsto -J_n\bar{g}J_n$. That is,
		\[\Uu^\ast(2n,A)=\{g\in(\Res_{\bZ\left[1/2,\sqrt{-1}\right]/\bZ\left[1/2\right]}\GL_{2n})(A):\ \bar{g}J_n=J_ng\}\]
		as a copresheaf on $\CAlg_{\bZ\left[1/2\right]}$. We can also define \[\SU^\ast(2n)=\Uu^\ast(2n)\cap\Res_{\bZ\left[1/2,\sqrt{-1}\right]/\bZ\left[1/2\right]}\SL_{2n}.\]
		Notice that we have an isomorphism 
		\[\Uu^\ast(2n)\otimes_{\bZ\left[1/2\right]} \bZ\left[1/2,\sqrt{-1}\right]\cong\GL_{2n};\]
		\[a\otimes 1+b\otimes\sqrt{-1}\mapsto a+\sqrt{-1}b\]
		\[\frac{g-J_ngJ_n}{2}\otimes 1+\frac{g+J_ngJ_n}{2\sqrt{-1}}\otimes\sqrt{-1} \leftmapsto g\]
		over $\bZ\left[1/2,\sqrt{-1}\right]$. It also restricts to $\SU^\ast(2n)\otimes_{\bZ\left[1/2\right]} \bZ\left[1/2,\sqrt{-1}\right]\cong\SL_{2n}$. In particular, $\Uu^\ast(2n)$ (resp.\ $\SU^\ast(2n)$) is a reductive group scheme (resp.\ a semisimple group scheme whose geometric fibers are simply connected). Notice also that $\Uu^\ast(2n)$ and $\SU^\ast(2n)$ are closed under formation of $(-)^\ast$. Let $\Sp(n)=\Uu^\ast(2n)\cap\Uu(2n)$. It is easy to show that
		\[\Sp(n,A)=\{g\in\Uu(2n,A):~g^T J_ng=J_n\}.\]
		In particular, we have
		\[\SU^\ast(2n)\cap \Uu(2n)=\Sp(n)\]
		by $\Sp(n)\subset \Res_{\bZ\left[1/2,\sqrt{-1}\right]/\bZ\left[1/2\right]}
		\Sp_n\subset \Res_{\bZ\left[1/2,\sqrt{-1}\right]/\bZ\left[1/2\right]}
		\SL_{2n}$.
	\end{ex}
	\begin{ex}[$\SO_0(p,q)$]\label{ex:modelofso(p,q)}
		Define an involution $\tau_{p,q}$ on $\SL_n$ by \[\tau_{p,q}(g)=I_{p,q}(g^T)^{-1}I_{p,q}.\]
		Set $\SO(p,q)=(\SL_n)^{\tau_{p,q}}$. That is,
		\[\SO(p,q,A)=\{g\in\SL_{p+q}(A):\ g^T I_{p,q}g=I_{p,q}\}.\]
		Write
		\[g_{p,q}=\left\{\begin{array}{ll}
			\diag(\overbrace{g_2,\cdots,g_2}^{\frac{p}{2}},\overbrace{\sqrt{-1}g_2,\cdots,\sqrt{-1}g_2}^{\frac{q}{2}})
			& (p,q\ \mathrm{even});\\
			\diag(\overbrace{g_2,\cdots,g_2}^{\frac{p}{2}},\overbrace{\sqrt{-1}g_2,\cdots,\sqrt{-1}g_2}^{\frac{q-1}{2}},\sqrt{-1})& (p\ \mathrm{even},\ q\ \mathrm{odd});\\
			\diag(1,\overbrace{g_2,\cdots,g_2}^{\frac{p-1}{2}},\overbrace{\sqrt{-1}g_2,\cdots,\sqrt{-1}g_2}^{\frac{q}{2}})& (p\ \mathrm{odd},\ q\ \mathrm{even});\\
			\diag\left(\overbrace{g_2,\cdots,g_2}^{\frac{p-1}{2}},\left(\begin{array}{cc}
				1& 1 \\ 
				\sqrt{-1} & -\sqrt{-1}
			\end{array} \right),\overbrace{\sqrt{-1}g_2,\cdots,\sqrt{-1}g_2}^{\frac{q-1}{2}}\right)
			& (p,q\ \mathrm{odd}).
		\end{array} \right.\]
		Then $g\mapsto g_{p,q}gg_{p,q}^{-1}$ defines an isomorphism
		\[\SO(p,q)\otimes_{\bZ\left[1/2\right]}
		\bZ\left[1/2,\sqrt{-1}\right]\cong\left\{\begin{array}{ll}
			\SO_n & (n\ \mathrm{even});\\
			\SO_n & (p\ \mathrm{even},\ q\ \mathrm{odd});\\
			\SO_n'& (p\ \mathrm{odd},\ q\ \mathrm{even}).
		\end{array} \right.\]
		Write $\mathrm{S}(\Oo(p)\times\Oo(q))=\SO(p,q)\cap \SO(n)$. Then the block diagonal embedding $K=\SO(p)\times\SO(q)\hookrightarrow \SO(p,q)$ identifies $K$ with $\mathrm{S}(\Oo(p)\times\Oo(q))^\circ$.
	\end{ex}
	\begin{ex}[$\SO^\ast(2n)$]\label{ex:modelofso^ast(2n)}
		Notice that $\SU^\ast(2n)$ is closed under the transpose $(-)^T$. Set
		\[\SO^\ast(2n)=\SU^\ast(2n)\cap\Res_{\bZ\left[1/2,\sqrt{-1}\right]/\bZ\left[1/2\right]}\Oo(2n).\]
		We have an isomorphism 
		\[\SO^\ast(2n)\otimes_{\bZ\left[1/2\right]} \bZ\left[1/2,\sqrt{-1}\right]\cong\SO(2n);\]
		\[a\otimes 1+b\otimes\sqrt{-1}\mapsto a+\sqrt{-1}b\]
		\[\frac{g-J_ngJ_n}{2}\otimes 1+\frac{g+J_ngJ_n}{2\sqrt{-1}}\otimes\sqrt{-1} \leftmapsto g\]
		over $\bZ\left[1/2,\sqrt{-1}\right]$. In particular, $\SO^\ast(2n)$ is a reductive group scheme\footnote{The group scheme $\SO^\ast(2n)$ is semisimple if $n\geq 2$.}. It is clear that $\SO^\ast(2n)$ is closed under formation of $(-)^\ast$. The morphism
		\[a\otimes 1+b\otimes\sqrt{-1}
		\mapsto \left(\begin{array}{cc}
			a & -b \\ 
			b & a
		\end{array} \right)\]
		determines an isomorphism $\Uu(n)\cong\SO^\ast(2n)\cap\Uu(2n)$ over $\bZ\left[1/2\right]$.
	\end{ex}
	\begin{ex}[$\SO(n,\bC)$]
		The involution $\bar{\ }$ determines a symmetric pair \[(\Res_{\bZ\left[1/2,\sqrt{-1}\right]/\bZ\left[1/2\right]}\SO(n),\SO(n)).\]
	\end{ex}
	\begin{ex}[$\Sp_n$]
		The homomorphism over $\bZ\left[1/2\right]$ \[\Uu(n)\hookrightarrow\Sp_n;\]
		\[a\otimes 1+b\otimes\sqrt{-1}\mapsto\left(\begin{array}{cc}
			a & -b \\ 
			b & a
		\end{array} \right)\]
		determines an isomorphism $K=\Uu(n)\cong\Sp_n\cap\Oo(2n)$. Since $\Sp_n$ is closed under formation of the transpose, $(\Sp_n,\Sp_n\cap\Oo(2n))$ is a symmetric pair.
	\end{ex}
	\begin{ex}[$\Sp(p,q)$]\label{ex:modelofSp(p,q)}
		It is clear that $\SU^\ast(2n)$ is closed under formation of the involution $\tau_{p,q,p,q}^c:g\mapsto I_{p,q,p,q}(g^\ast)^{-1}I_{p,q,p,q}$. Define $\Sp(p,q)=\SU^\ast(2n)^{\tau_{p,q,p,q}^c}$. That is
		\[\Sp(p,q,A)=\left\{g=\left(\begin{array}{cc}
			a & -b \\ 
			\bar{b} & \bar{a}
		\end{array} \right)\in\SU^\ast(2n,A):\ g^\ast I_{p,q,p,q}g=I_{p,q,p,q}\right\}.\]
		For example, $\Sp(n)=\Sp(n,0)$. This definition is closely related to quaternions (\cite{kobayashioshima} Proposition 7.9 (ii), \cite{MR1920389} section I.8). Another realization is given by
		\[\Sp(p,q,A)'=\{g\in(\Res_{\bZ\left[1/2,\sqrt{-1}\right]/\bZ\left[1/2\right]}\Sp_n)(A):\ g^\ast I_{p,q,p,q}g=I_{p,q,p,q}\}.\]
		The conjugation by $I_{n+p,q}$ gives an isomorphism $\Sp(p,q)\cong\Sp(p,q)'$. The advantage of $\Sp(p,q)'$ is that the relation with $\Sp_n$ is clear. In fact, we have an isomorphism
		\[\Sp(p,q)'\otimes_{\bZ\left[1/2\right]} \bZ\left[1/2,\sqrt{-1}\right]\cong\Sp_n;\]
		\[a\otimes 1+b\otimes\sqrt{-1}\mapsto a+\sqrt{-1}b\]
		\[\frac{g+I_{p,q,p,q}(g^T)^{-1}I_{p,q,p,q}}{2}\otimes 1+\frac{g-I_{p,q,p,q}(g^T)^{-1}I_{p,q,p,q}}{2\sqrt{-1}}\otimes \sqrt{-1}\leftmapsto g.\]
		Notice also that $\Sp(p,q)$ is closed under formation of $(-)^\ast$. The morphism
		\[\left(\left(\begin{array}{cc}
			a_{11} & a_{12} \\ 
			a_{21} & a_{22}
		\end{array} \right),\left(\begin{array}{cc}
			b_{11} & b_{12} \\ 
			b_{21} & b_{22}
		\end{array}\right)\right)\mapsto \left(\begin{array}{cccc}
			a_{11} & 0 & a_{12} & 0 \\ 
			0 & b_{11} & 0 & b_{12} \\ 
			a_{21} & 0 & a_{22} & 0 \\ 
			0 & b_{21} & 0 & b_{22}
		\end{array} \right)\]
		determines an isomorphism $\Sp(p)\times\Sp(q)\cong\Sp(p,q)\cap\Uu(2n)$.
	\end{ex}
	\begin{ex}[$\Sp_n(\bC)$]
		It is clear that $\Res_{\bZ\left[1/2,\sqrt{-1}\right]/\bZ\left[1/2\right]}\Sp_n$ is closed under formation of $(-)^\ast$. Hence $(\Res_{\bZ\left[1/2,\sqrt{-1}\right]/\bZ\left[1/2\right]}\Sp_n,\Sp(n))$ is a symmetric pair.
	\end{ex}
	\subsection{Standard models of Fundamental Cartan subgroups}
	In the former section, we introduced models $(G,K)$ of symmetric pairs of classical connected Lie groups and their maximal compact subgroups. In this section, we give maximal tori $T$ of $K$ for models of non-complex groups to get fundamental Cartan subgroups of $G$ which we will denote by $H=Z_G(T)$; We will deal with models of complex groups in a more general context in Section 4.2. It easily follows from the arguments on models of compact tori of rank 1 in Section 3.2 that the conjugation action on $X^\ast(T\otimes_{\bZ\left[1/2\right]} \bZ\left[1/2,\sqrt{-1}\right])$ for the tori $T$ in this section is equal to $-1$ (cf.\ Proposition \ref{prop:conj=-1}). We also check that $(G\otimes_{\bZ\left[1/2\right]} \bZ\left[1/2,\sqrt{-1}\right],H\otimes_{\bZ\left[1/2\right]} \bZ\left[1/2,\sqrt{-1}\right])$ is split by constructing isomorphisms with familiar split reductive group schemes $\GL_n$, $\SO_{2n+1}$, $\Sp_n$, $\SO_{2n}$, and $\SO'_{2n}$ with the split maximal tori of diagonal matrices in order to use the techniques developed in Section 2.1, 2.3, and 3.1. To find an isomorphism of $H\otimes_{\bZ\left[1/2\right]} \bZ\left[1/2,\sqrt{-1}\right]$ with a split torus of diagonal matrices will lead to an easy description of $X^\ast(H\otimes_{\bZ\left[1/2\right]} \bZ\left[1/2,\sqrt{-1}\right])$ and $X_\ast(H\otimes_{\bZ\left[1/2\right]} \bZ\left[1/2,\sqrt{-1}\right])$ which appears in Section 4.1. We will denote the split maximal torus of diagonal matrices in each of $\GL_n$, $\SO_{2n+1}$, $\Sp_n$, $\SO_{2n}$, and $\SO'_{2n}$ by $H_s$.
	
	Let $p,q,n$ be nonnegative integers such that $p+q=n$. We will possibly assume more conditions in some cases.
	
	\begin{ex}
		Put $G=\GL_{2n+1}$. Let $T\subset\GL_{2n+1}$ be the subgroup scheme \[\diag(\overbrace{\SO(2),\SO(2),\ldots,\SO(2)}^{n},1).\]
		The conjugation of $\GL_{2n}$ over $\bZ\left[1/2,\sqrt{-1}\right]$ by $g_{2n+1}$ restricts to the isomorphism from $H\otimes_{\bZ\left[1/2\right]} \bZ\left[1/2,\sqrt{-1}\right]$ onto $H_s\subset\GL_{2n+1}$. In fact, notice that a maximal torus containing $T$ is $H$ by definition of fundamental Cartan subgroups. The assertion then follows since $g_{2n+1}Tg_{2n+1}^{-1}$ consists of diagonal matrices.
	\end{ex}
	\begin{ex}
		Put $G=\GL_{2n}$ with $n\geq 1$. Let $T\subset\GL_{2n}$ be the subgroup scheme
		\[\diag(\overbrace{\SO(2),\SO(2),\ldots,\SO(2)}^{n}).\]
		Then the conjugation of $\GL_{2n}$ over $\bZ\left[1/2,\sqrt{-1}\right]$ by $g_{2n}$ restricts to the isomorphism from $H\otimes_{\bZ\left[1/2\right]} \bZ\left[1/2,\sqrt{-1}\right]$ to $H_s\subset\GL_{2n}$.
	\end{ex}
	\begin{ex}
		Put $G=\Uu(p,q)$ with $p+q=n\geq 1$ and $p\geq q$. Let $H= T\subset\Uu(p,q)$ be the subgroup scheme $\Uu(1)^n$ of diagonal matrices in $\Uu(p,q)$. The isomorphism
		\[\Uu(p,q)\otimes_{\bZ\left[1/2\right]}\bZ\left[1/2,\sqrt{-1}\right]\cong\GL_{p+q}\]
		in Example \ref{ex:modelofupq} sends $H\otimes_{\bZ\left[1/2\right]}\bZ\left[1/2,\sqrt{-1}\right]$ onto $H_s\subset\GL_n$.
	\end{ex}
	\begin{ex}
		Put $G=\Uu^\ast(2n)$ with $n\geq 1$. Diagonal matrices form a maximal torus $T$ of $\Sp(n)$. Namely, we have
		\[T=\{\diag(a_1,a_2,\ldots,a_{2n})\in\Uu(2n):
		~a_{i+n}=a_{i}^{-1}\ {\rm for\ } 1\leq i\leq n\}
		\cong\Uu(1)^n.\]
		The fundamental Cartan subgroup of $\Uu^\ast(2n)$ attached to $T\subset\Sp(n)$ consists of the diagonal matrices, i.e.,
		\begin{flalign*}
			&H\\
			&=\{\diag(a_1,a_2,\ldots,a_{2n})\in\Res_{\bZ\left[1/2,\sqrt{-1}\right]/\bZ\left[1/2\right]}\GL_{2n}:
			~a_{i+n}=a_{i}^{-1}\ {\rm for\ } 1\leq i\leq n\}\\
			&\cong 
			(\Res_{\bZ\left[1/2,\sqrt{-1}\right]/\bZ\left[1/2\right]}\bG_m)^n.
		\end{flalign*}
		The canonical isomorphism $\Uu^\ast(2n)\otimes_{\bZ\left[1/2\right]}\bZ\left[1/2,\sqrt{-1}\right]\cong\GL_{2n}$ restricts to $H\otimes_{\bZ\left[1/2\right]}\bZ\left[1/2,\sqrt{-1}\right]\cong H_s\subset\GL_{2n}$;
		\begin{flalign*}
			&\left(\begin{array}{cc}
				\diag(a_i\otimes 1+b_i\otimes\sqrt{-1}) &  \\
				& \diag(a_i\otimes 1-b_i\otimes\sqrt{-1})
			\end{array}
			\right)\\
			&\mapsto\left(\begin{array}{cc}
				\diag(a_i+b_i\sqrt{-1}) &  \\
				& \diag(a_i-b_i\sqrt{-1})
			\end{array}\right)\\
			&\left(\begin{array}{cc}
				\diag(\frac{a_i+a_{i+n}}{2}\otimes 1+\frac{a_i-a_{i+n}}{2\sqrt{-1}}\otimes\sqrt{-1}) &  \\
				& \diag(\frac{a_i+a_{i+n}}{2}\otimes 1-\frac{a_i-a_{i+n}}{2\sqrt{-1}}\otimes\sqrt{-1})
			\end{array}\right)\\
			&\reflectbox{$\mapsto$}\diag(a_i).
		\end{flalign*}
		See also Proposition \ref{prop:galactionofchargrprestorus}.
	\end{ex}
	\begin{ex}
		Put $G=\SO(2p,2q+1)$.
		Let $H= T$ be the subgroup scheme 
		\[\diag(\overbrace{\SO(2),\SO(2),\ldots,\SO(2)}^{p+q},1)
		\subset \SO(2p,2q+1).\]
		The isomorphism
		\[\SO(2p,2q+1)\otimes_{\bZ\left[1/2\right]}\bZ\left[1/2,\sqrt{-1}\right]\cong\SO_{2n+1}\]
		in Example \ref{ex:modelofso(p,q)} restricts to $H\otimes_{\bZ\left[1/2\right]}\bZ\left[1/2,\sqrt{-1}\right] \cong H_s\subset\SO_{2n+1}$.
	\end{ex}
	\begin{ex}
		Put $G=\Sp_n$ with $n\geq 1$.
		A fundamental Cartan subgroup $H=T$ of $\Sp_n$ is given by
		\begin{flalign*}
			&\left\{\left(\begin{array}{cccccc}
				a_1 &  &  & -b_1 &  &  \\
				& \ddots &  &  & \ddots &  \\
				&  & a_n &  &  & -b_n \\
				b_1 &  &  & a_1 &  &  \\
				& \ddots &  &  & \ddots &  \\
				&  & b_n &  &  & a_n
			\end{array}
			\right)\in\GL_{2n}:~a_i^2+b_i^2=1{\rm \ for\ }1\leq i\leq n
			\right\}\\
			&\cong\Uu(1)^n.
		\end{flalign*}
		The conjugation by $g'_{2n}$ determines an automorphism of $\Sp_n$ over $\bZ\left[1/2,\sqrt{-1}\right]$. Moreover, it restricts to the isomorphism from $H\otimes_{\bZ\left[1/2\right]} \bZ\left[1/2,\sqrt{-1}\right]$ onto $H_s\subset \Sp_n$.
	\end{ex}
	\begin{ex}
		Put $G=\Sp(p,q)$ with $p+q=n\geq 1$.
		A fundamental Cartan subgroup $H=T$ of $\Sp(p,q)$ is given by
		\[H=\{\diag(a_1,a_2,\ldots,a_{2n})\in\Uu(2n):
		~a_{i+n}=a_{i}^{-1}\ {\rm for\ } 1\leq i\leq n\}
		\cong\Uu(1)^n.\]
		The composite isomorphism \[\Sp(p,q)\otimes_{\bZ\left[1/2\right]}
		\bZ\left[1/2,\sqrt{-1}\right]\cong
		\Sp(p,q)'\otimes_{\bZ\left[1/2\right]}
		\bZ\left[1/2,\sqrt{-1}\right]
		\cong\Sp_n\]
		in Example \ref{ex:modelofSp(p,q)} restricts to $H\otimes_{\bZ\left[1/2\right]} \bZ\left[1/2,\sqrt{-1}\right]
		\cong H_s\subset\Sp_n$.
	\end{ex}
	\begin{ex}
		Put $G=\SO(2p,2q)$ with $n=p+q\geq 1$.
		Let $H= T$ be the subgroup scheme $\SO(2)^{p+q}$ of block diagonal matrices. The isomorphism
		\[\SO(2p,2q)\otimes_{\bZ\left[1/2\right]}\bZ\left[1/2,\sqrt{-1}\right]\cong\SO_{2n}\]
		in Example \ref{ex:modelofso(p,q)} restricts to
		$H\otimes_{\bZ\left[1/2\right]}\bZ\left[1/2,\sqrt{-1}\right]
		\cong H_s\subset \SO_{2n}$.
	\end{ex}
	\begin{ex}
		Put $G=\SO(2p+1,2q+1)$.
		A maximal torus $T$ of 
		\[K=\diag(\SO(2p),1,1,\SO(2q))\]
		is given by the subgroup scheme \[\diag(\overbrace{\SO(2),\SO(2),\ldots,\SO(2)}^{p},1,1,
		\overbrace{\SO(2),\SO(2),\ldots,\SO(2)}^{q}).\]
		The fundamental Cartan subgroup $H$ attached to $T$ is given by
		\[\diag(\overbrace{\SO(2),\SO(2),\ldots,\SO(2)}^{p},\SO(1,1),
		\overbrace{\SO(2),\SO(2),\ldots,\SO(2)}^{q}).\]
		Notice that the isomorphism
		\[\SO(2p+1,2q+1)\otimes_{\bZ\left[1/2\right]}
		\bZ\left[1/2,\sqrt{-1}\right]\cong \SO_{2n+2}\]
		in Example \ref{ex:modelofso(p,q)} sends $H\otimes_{\bZ\left[1/2\right]}\bZ\left[1/2,\sqrt{-1}\right]$ onto $H_s\subset\SO_{2n+2}$.
	\end{ex}
	\begin{ex}
		Put $G=\SO^\ast(2n)$ with $n\geq 1$.
		A fundamental Cartan subgroup $H=T$ of $\SO^\ast(2n)$ is given by
		\[\begin{split}
			H&=T\\
			&=\left\{\left(\begin{array}{cccccc}
				a_1 &  &  & -b_1 &  &  \\
				& \ddots &  &  & \ddots &  \\
				&  & a_n &  &  & -b_n \\
				b_1 &  &  & a_1 &  &  \\
				& \ddots &  &  & \ddots &  \\
				&  & b_n &  &  & a_n
			\end{array}
			\right)\in\GL_{2n}:~a_i^2+b_i^2=1{\rm \ for\ }1\leq i\leq n
			\right\}.
		\end{split}\]
		The conjugation by $g'_{2n}$ determines an isomorphism
		\[\SO(2n)\otimes_{\bZ\left[1/2\right]}
		\bZ\left[1/2,\sqrt{-1}\right]\cong\SO'_{2n}.\]
		Compose it with the isomorphism
		\[\SO^\ast(2n)\otimes_{\bZ\left[1/2\right]}
		\bZ\left[1/2,\sqrt{-1}\right]\cong \SO(2n)\otimes_{\bZ\left[1/2\right]}
		\bZ\left[1/2,\sqrt{-1}\right]\]
		in Example \ref{ex:modelofso^ast(2n)} to get
		\[\SO^\ast(2n)\otimes_{\bZ\left[1/2\right]}
		\bZ\left[1/2,\sqrt{-1}\right]\cong\SO_{2n}'.\]
		It restricts to the isomorphism from $H\otimes_{\bZ\left[1/2\right]}
		\bZ\left[1/2,\sqrt{-1}\right]$ onto $H_s\subset\SO_{2n}$.
	\end{ex}
	\section{Classification}
	This section is devoted to completing our program in Section 2 and 3.1 for the standard models $(G,T,H)$ introduced in Section 3. I.e., we have the following four aims:
	\begin{enumerate}
		\item Compute $\Dyn G$ and $\type G$ by choosing a simple system $\Pi$.
		\item Determine the character group of the parabolic subgroups over $\bZ\left[1/2,\sqrt{-1}\right]$ attached to a subset of $\Pi$.
		\item Classify equivariant line bundles on partial flag schemes over $\bZ\left[1/2\right]$.
	\end{enumerate}
	\subsection{Non-complex case}
	In this section, we run the above program for the standard models of non-complex classical connected Lie groups. We achieve it in the following way:
	\begin{enumerate}
		\item Take the standard bases $\{\epsilon_i\}$, $\{e_i\}$ of $X_\ast(H_s)$ and $X^\ast(H_s)$ for the split maximal torus $H_s$ in Section 3.4.
		\item Identify $X_\ast(H\otimes_{\bZ\left[1/2\right]} \bZ\left[1/2,\sqrt{-1}\right])$ and $X^\ast(H\otimes_{\bZ\left[1/2\right]} \bZ\left[1/2,\sqrt{-1}\right])$ with $\bZ^n$ for some $n$ by using Stage 1 and the isomorphism
		\[H\otimes_{\bZ\left[1/2\right]} \bZ\left[1/2,\sqrt{-1}\right]\cong H_s\]
		in Section 3.4.
		\item Describe the image of $X_\ast(T\otimes_{\bZ\left[1/2\right]} \bZ\left[1/2,\sqrt{-1}\right])$ in \[X_\ast(H\otimes_{\bZ\left[1/2\right]} \bZ\left[1/2,\sqrt{-1}\right])\]
		under the identification of Stage 2.
		\item Choose a free basis of $X_\ast(T\otimes_{\bZ\left[1/2\right]} \bZ\left[1/2,\sqrt{-1}\right])$ under the expression of Stage 3.
		\item Describe the positive system $\Delta^+$ and the simple system $\Pi$ attached to Stage 4 (cf.\ Construction \ref{cons:simplesys}).
		\item For $\lambda\in X^\ast(H\otimes_{\bZ\left[1/2\right]} \bZ\left[1/2,\sqrt{-1}\right])$ and $\alpha\in\Pi$, compute $\langle\alpha^\vee,\lambda\rangle$.
		\item Find $w\in N_{G(\bZ\left[1/2,\sqrt{-1}\right])}(H\otimes_{\bZ\left[1/2\right]} \bZ\left[1/2,\sqrt{-1}\right])$ such that $w\Pi=-\Pi$ and $(\bar{w}w)^2=e$ (cf.\ Section 2.1, Proposition \ref{prop:conj=-1}, the beginning of Section 3.4, Theorem \ref{thmF}).
		\item Describe the Dynkin scheme by computing the $\ast$-involution (see Example \ref{ex:dynkin}).
		\item For $\lambda\in X^\ast(H\otimes_{\bZ\left[1/2\right]} \bZ\left[1/2,\sqrt{-1}\right])$, see whether $\bar{\lambda}=w\lambda$.
		\item Determine which $\lambda\in X^\ast(H\otimes_{\bZ\left[1/2\right]} \bZ\left[1/2,\sqrt{-1}\right])$ satisfies $\lambda(\bar{w}w)=1$.
	\end{enumerate}
	In fact, we can determine the character group of the parabolic subgroup $P_x'$ attached to a subset $x\subset\Pi$ by Stage 6 (see Section 2.3.2). In view of Theorem \ref{thm:dynkin} (2) or the definition of $\type G$, we can determine when $P'_x$ has a half-integral parabolic type by Stage 8. In particular, we deduce the classification of parabolic partial flag schemes over $\bZ\left[1/2\right]$. Recall that $\bar{\Pi}=-\Pi$ in our setting (see the beginning of Section 3.4). We can determine when $\lambda$ extends to a character of $P_x'$ by Stage 6 (Example \ref{ex:charofparabolics}). For a character $\lambda$ in Stage 10, Stage 7 implies $\lambda(\bar{w}w)\in\{\pm 1\}$. Moreover, we determine the characters $\lambda$ satisfying $\bar{\lambda}=w\lambda$ and $\lambda(\bar{w}w)=1$. The equivariant line bundles with $\bZ\left[1/2\right]$-forms are $\cL_\lambda$ with such $\lambda$ by Remark \ref{rem:associatedbundle} and Corollary \ref{cor:quadcase}. The $\bZ\left[1/2\right]$-form is unique by Theorem \ref{thm:uniqueness} and Example \ref{ex:vanishing}. As a conclusion, these form the complete list of equivariant line bundles on partial flag schemes over $\bZ\left[1/2\right]$ (see Corollary \ref{corG} for the unifying statement).
	
	To avoid the repeated arguments, let us introduce the bases in 1 here before we start the case-by-case discussions for the standard models $G$. We temporally take a positive integer $n$. Define $\epsilon_i\in X_\ast(H_s)$ and $e_i\in X^\ast(H_s)$ for $1\leq i\leq n$ in each of the cases $H_s\subset\GL_n,\SO_{2n+1},\Sp_n,\SO_{2n},\SO'_{2n}$ as follows:
	\begin{description}
		\item[$\GL_n$]
		\[\epsilon_i(a)=\diag(1,1,\ldots,1,\overset{\overset{i}{\vee}}{a},1,1,\ldots,1)\]
		\[e_i(\diag(a_1,a_2,\ldots,a_n))=a_i\]
		\item[$\SO_{2n+1}$] 
		\[\epsilon_i(a)=\diag(1,1,\ldots,1,\overset{\overset{2i-1}{\vee}}{a},a^{-1},1,1,\ldots,1)\]
		\[e_i(\diag(a_1,a_2,\ldots,a_{2n},1))
		=a_{2i-1}\]
		\item[$\Sp_n$] 
		\[\epsilon_i(a)=\diag(1,1,\ldots,1,\overset{\overset{i}{\vee}}{a},1,1,\ldots,1,1,\overset{\overset{i+n}{\vee}}{a^{-1}},1,1,\ldots,1)\]
		\[e_i(\diag(a_1,a_2,\ldots,a_{2n}))
		=a_i\]
		\item[$\SO_{2n}$] 
		\[\epsilon_i(a)=\diag(1,1,\ldots,1,\overset{\overset{2i-1}{\vee}}{a},a^{-1},1,1,\ldots,1)\]
		\[e_i(\diag(a_1,a_2,\ldots,a_{2n}))
		=a_{2i-1}\]
		\item[$\SO'_{2n}$]
		\[\epsilon_i(a)=\diag(1,1,\ldots,1,\overset{\overset{i}{\vee}}{a},1,1,\ldots,1,1,\overset{\overset{i+n}{\vee}}{a^{-1}},1,1,\ldots,1)\]
		\[e_i(\diag(a_1,a_2,\ldots,a_{2n}))=a_i.\]
	\end{description}
	We remark that we have the empty bases for $\SO_1$ since $\SO_1$ is the trivial group.
	
	We now start the case-by-case study. Let $p,q,n$ be nonnegative integers such that $p+q=n$. We will possibly assume more conditions on these integers in some cases. Let $T$ and $H$ be as in Section 3.4.
	\begin{ex}[$\GL_{2n+1}$]
		The image of $X_\ast(T\otimes_{\bZ\left[1/2\right]} \bZ\left[1/2,\sqrt{-1}\right])$ in \[X_\ast(H\otimes_{\bZ\left[1/2\right]} \bZ\left[1/2,\sqrt{-1}\right])\cong\bZ^{2n+1}\]
		is
		\[\{\mu=(\mu_i)\in \bZ^{2n+1}:~\mu_{2i}
		=-\mu_{2i-1}\ \mathrm{for}\ 1\leq i\leq n,\ \mu_{2n+1}=0\}.\]
		The ordered basis $\{\epsilon_1-\epsilon_2,\epsilon_3-\epsilon_4,\ldots,\epsilon_{2n-1}-\epsilon_{2n}\}$ of
		\[X_\ast(T\otimes_{\bZ\left[1/2\right]} \bZ\left[1/2,\sqrt{-1}\right])\]
		attaches the following positive and simple systems:
		\[\begin{split}
			\Delta^+
			&=\{e_i-e_j:~1\leq i<j\leq 2n+1,~i\mathrm{\ is\ odd}\}\\
			&\cup
			\{-e_i+e_j:~1\leq i<j\leq 2n+1,~i\mathrm{\ is\ even}\}
		\end{split}\]
		\[\begin{split}
			\Pi&=\{e_{2i-1}-e_{2i+1}:~1\leq i\leq n\}\\
			&\cup\{-e_{2i}+e_{2i+2}:~1\leq i\leq n-1\}\cup
			\{-e_{2n}+e_{2n+1}\}.
		\end{split}\]
		For $1 \leq i \leq n$ (resp.~$1\leq i\leq n-1$),
		$\lambda= (\lambda_i) \in X^\ast(H_s)$ annihilates $(e_{2i-1} - e_{2i+1})^\vee=\epsilon_{2i-1}-\epsilon_{2i+1}$ (resp.~$(-e_{2i}+e_{2i+2})^\vee=-\epsilon_{2i}+\epsilon_{2i+2}$) if and only if $\lambda_{2i-1} = \lambda_{2i+1}$ (resp.~$\lambda_{2i} = \lambda_{2i+2}$). A character $\lambda$ annihilates $(-e_{2n} + e_{2n+1})^\vee=-\epsilon_{2n}+\epsilon_{2n+1}$ if and only if $\lambda_{2n-2} =\lambda_{2n-1}$.
		The matrix $S_{2n+1}$ represents the longest element of the Weyl group. Observe that for elements $a,b$ of a $\bZ\left[1/2,\sqrt{-1}\right]$-algebra, we have
		\[g_2\diag(a,b)g_2^{-1}=\left(\begin{array}{cc}
			\frac{a+b}{2} & -\frac{a-b}{2\sqrt{-1}} \\
			\frac{a-b}{2\sqrt{-1}} & \frac{a+b}{2}
		\end{array}
		\right)\]
		\[\bar{g}_2\diag(a,b)\bar{g}_2^{-1}=\left(\begin{array}{cc}
			\frac{a+b}{2} & \frac{a-b}{2\sqrt{-1}} \\
			-\frac{a-b}{2\sqrt{-1}} & \frac{a+b}{2}
		\end{array}
		\right)
		=g_2\diag(b,a)g_2^{-1}.\]
		Hence for a character $\lambda=(\lambda_i)\in X^\ast(H\otimes_{\bZ\left[1/2\right]} \bZ\left[1/2,\sqrt{-1}\right])$, we have
		\[\bar{\lambda}=w\lambda=
		(\lambda_2,\lambda_1,\lambda_4,\lambda_3,\ldots,
		\lambda_{2n},\lambda_{2n-1},\lambda_{2n+1}).\]
		In particular, we obtain $\Dyn\GL_{2n+1}\cong\Spec\bZ\left[1/2\right]^{2n}$. For every character $\lambda\in X^\ast(H\otimes_{\bZ\left[1/2\right]} \bZ\left[1/2,\sqrt{-1}\right])$, we have $\lambda(\bar{w}w)=1$ since $\bar{w}w=I_{2n+1}$.
	\end{ex}
	\begin{ex}[$\GL_{2n}$, $n\geq 1$]
		The image of $X_\ast(T\otimes_{\bZ\left[1/2\right]} \bZ\left[1/2,\sqrt{-1}\right])$ in $X_\ast(H\otimes_{\bZ\left[1/2\right]} \bZ\left[1/2,\sqrt{-1}\right])\cong\bZ^n$ is
		\[\{\mu=(\mu_i)\in \bZ^{2n}:~\mu_{2i}
		=-\mu_{2i-1}\ \mathrm{for}\ 1\leq i\leq n\}.\]
		The ordered basis $\{\epsilon_1-\epsilon_2,\epsilon_3-\epsilon_4,\ldots,\epsilon_{2n-1}-\epsilon_{2n}\}$ of $X_\ast(T\otimes_{\bZ\left[1/2\right]} \bZ\left[1/2,\sqrt{-1}\right])$ attaches the following positive and simple systems:
		\[\begin{split}
			\Delta^+
			&=\{e_i-e_j:~1\leq i<j\leq 2n,~i\mathrm{\ is\ odd}\}\\
			&\cup
			\{-e_i+e_j:~1\leq i<j\leq 2n,~i\mathrm{\ is\ even}\}
		\end{split}\]
		\[\Pi=\{e_{2i-1}-e_{2i+1}:~1\leq i\leq n-1\}
		\cup\{-e_{2i}+e_{2i+2}:~1\leq i\leq n-1\}\cup
		\{e_{2n-1}-e_{2n}\}.\]
		For $1 \leq i \leq n-1$,
		$\lambda= (\lambda_i) \in X^\ast(H_s)$ annihilates $(e_{2i-1} - e_{2i+1})^\vee=\epsilon_{2i-1}-\epsilon_{2i+1}$ (resp.~$(-e_{2i}+e_{2i+2})^\vee=-\epsilon_{2i}+\epsilon_{2i+2}$) if and only if $\lambda_{2i-1} = \lambda_{2i+1}$ (resp.~$\lambda_{2i} = \lambda_{2i+2}$). A character $\lambda$ annihilates $(e_{2n-1} - e_{2n})^\vee$ if and only if $\lambda_{2n-1} =\lambda_{2n}$.
		The matrix $S_{2n}$ represents the longest element of the Weyl group. For a character $\lambda=(\lambda_i)\in X^\ast(H\otimes_{\bZ\left[1/2\right]} \bZ\left[1/2,\sqrt{-1}\right])$, we have
		\[\bar{\lambda}=w\lambda=
		(\lambda_2,\lambda_1,\lambda_4,\lambda_3,\ldots,
		\lambda_{2n},\lambda_{2n-1}).\]
		In particular, we obtain $\Dyn\GL_{2n}\cong\Spec\bZ\left[1/2\right]^{2n-1}$. We have $\lambda(\bar{w}w)=1$ for every character $\lambda\in X^\ast(H\otimes_{\bZ\left[1/2\right]} \bZ\left[1/2,\sqrt{-1}\right])$ since $\bar{w}w=I_{2n}$.
	\end{ex}
	\begin{ex}[$\Uu(p,q)$, $p+q=n\geq 1$, $p\geq q$]
		Recall that $H=T$ in this setting. The ordered basis $\{\epsilon_1,\epsilon_2,\ldots,\epsilon_n\}$ gives rise to the positive system of type $A_{n-1}$ and its simple system in \cite{MR1920389} Appendix C. For $1 \leq i \leq n-1$, $\lambda=(\lambda_i)\in X^\ast(H_s)$ annihilates $(e_i-e_{i+1})^\vee=\epsilon_i-\epsilon_{i+1}$ if and only if $\lambda_i=\lambda_{i+1}$. A representative of the longest element is given by
		\[w=\left(\begin{array}{ccc}
			&  & \sqrt{-1}K_q\otimes 1 \\
			& K_{p-q}\otimes 1 &  \\
			\sqrt{-1}K_q \otimes 1&  & 
		\end{array}
		\right)\in\Uu(p,q,\bZ\left[1/2,\sqrt{-1}\right]).\]
		For $\lambda\in X^\ast(H\otimes_{\bZ\left[1/2\right]} \bZ\left[1/2,\sqrt{-1}\right])$, we have
		\[\bar{\lambda}=
		(-\lambda_1,-\lambda_2,\ldots,-\lambda_n)\]
		\[w\lambda=(\lambda_n,\lambda_{n-1},\ldots,\lambda_1).\]
		Hence the $\ast$-involution on $\Pi$ is given by
		\[\overline{w(e_i-e_{i+1})}=e_{2n-i}-e_{2n-i+1}.\]
		In particular, $\overline{w(e_i-e_{i+1})}=e_i-e_{i+1}$ if and only if $n$ is even and $i=\frac{n}{2}$. We thus get
		\[\Dyn\Uu(p,q)=\left\{\begin{array}{ll}
			\Spec\bZ\left[1/2\right]\coprod
			\Spec\bZ\left[1/2,\sqrt{-1}\right]^{\frac{n}{2}-1}
			&(n \mathrm{\ is\ even})\\
			\Spec\bZ\left[1/2,\sqrt{-1}\right]^{\frac{n-1}{2}}
			&(n \mathrm{\ is\ odd})
		\end{array}\right.\]
		For $\lambda\in X^\ast(H)$, $\bar{\lambda}=w\lambda$ if and only if $\lambda_i+\lambda_{n-i}=0$ for $1\leq i\leq n$. Since $\bar{w}w=I_n$, we have $\lambda(\bar{w}w)=1$.
	\end{ex}
	\begin{ex}[$\Uu^\ast(2n)$, $n\geq 1$]
		The image of $X_\ast(T\otimes_{\bZ\left[1/2\right]} \bZ\left[1/2,\sqrt{-1}\right])$ in $X_\ast(H\otimes_{\bZ\left[1/2\right]} \bZ\left[1/2,\sqrt{-1}\right])\cong\bZ^n$ is
		\[\{\mu=(\mu_i)\in \bZ^{2n}:~\mu_{i}
		=-\mu_{i+n}\ \mathrm{for}\ 1\leq i\leq n\}.\]
		The ordered basis 
		$\{\epsilon_i-\epsilon_{i+n}:~1\leq i\leq n\}$ of $X_\ast(T\otimes_{\bZ\left[1/2\right]}\bZ\left[1/2,\sqrt{-1}\right])$ attaches the following positive and simple systems:
		\[\Delta^+(\GL_{2n},H_s)
		=\{e_i-e_j:~1\leq i<j\leq 2n,i\leq n\}
		\cup\{-e_i+e_j:~n<i<j\leq 2n\}\]
		\[\Pi=\{e_i-e_{i+1}:~1\leq i\leq n-1\}
		\cup\{e_n-e_{2n}\}\cup
		\{-e_i+e_{i+1}:~n+1\leq i\leq 2n-1\}.\]
		For $1 \leq i \leq n-1$,
		$\lambda= (\lambda_i) \in X^\ast(H_s)$ annihilates $(e_i - e_{i+1})^\vee=\epsilon_i-\epsilon_{i+1}$ (resp.~$(-e_{i+n}+e_{i+n+1})^\vee=-\epsilon_{i+n}+\epsilon_{i+n+1}$) if and only if $\lambda_i = \lambda_{i+1}$ (resp.~$\lambda_{i+n} = \lambda_{i+n+1}$). A character $\lambda$ annihilates $(e_n - e_{2n})^\vee=\epsilon_n-\epsilon_{2n}$ if and only if $\lambda_n =\lambda_{2n}$. The longest element of the Weyl group is represented by $J_n\in\Sp(n,\bZ\left[1/2\right])$. For $\lambda\in X_\ast(H\otimes_{\bZ\left[1/2\right]}\bZ\left[1/2,\sqrt{-1}\right])$, we have
		\[w\lambda=\bar{\lambda}=
		(\lambda_{n+1},\lambda_{n+2},\ldots,\lambda_{2n},
		\lambda_1,\lambda_2,\ldots,\lambda_n).\]
		This shows $\Dyn\Uu^\ast(2n)\cong\Spec\bZ\left[1/2\right]^{2n-1}$. Since $J_n^2=-I_{2n}$, we have $\lambda(\bar{w}w)=(-1)^{\sum_{i=1}^{2n}\lambda_i}$. In particular, $\lambda(\bar{w}w)=1$ if and only if $\sum_{i=1}^{2n}\lambda_i$ is even.
	\end{ex}
	\begin{ex}[$\SO(2p,2q+1)$]
		Recall $H=T$ in this case. The ordered basis $\{\epsilon_1,\epsilon_2,\ldots,\epsilon_n\}$ gives rise to the positive system of type $B_n$ and its simple system in \cite{MR1920389} Appendix C.
		For $1 \leq i \leq n-1$, $\lambda = (\lambda_i) \in X^\ast(H_s)$ annihilates $(e_i-e_{i+1})^\vee=\epsilon_i-\epsilon_{i+1}$ if and only if $\lambda_i=\lambda_{i+1}$. A character $\lambda$ annihilates $e_n^\vee=2\epsilon_n$ if and only if $\lambda_n=0$.
		A representative of the longest element is given by
		\[w=\diag(1,-1,1,-1,\ldots,1,-1,(-1)^{p+q})\in
		\SO(2p,2q+1,\bZ\left[1/2\right]).\]
		The actions of $w$ and the conjugation on $X_\ast(H\otimes_{\bZ\left[1/2\right]}\bZ\left[1/2,\sqrt{-1}\right])$ are given by $w\lambda=\bar{\lambda}=-\lambda$. This shows $\Dyn\SO(2p,2q+1)\cong\Spec\bZ\left[1/2\right]^n$.We have $\lambda(\bar{w}w)=1$ for every character $\lambda\in X^\ast(H\otimes_{\bZ\left[1/2\right]} \bZ\left[1/2,\sqrt{-1}\right])$ since $\bar{w}w=I_{2n+1}$.
	\end{ex}
	\begin{ex}[$\Sp_n$, $n\geq 1$]
		Recall $H=T$ in this case. The ordered basis $\{\epsilon_1,\epsilon_2,\ldots,\epsilon_n\}$ gives rise to the positive system of type $C_n$ and its simple system in \cite{MR1920389} Appendix C. For $1\leq i \leq n-1$, $\lambda=(\lambda_i)\in X^\ast(H_s)$ annihilates $(e_i-e_{i+1})^\vee=\epsilon_i-\epsilon_{i+1}$ if and
		only if $\lambda_i=\lambda_{i+1}$. A character $\lambda$ annihilates $(2e_n)^\vee=\epsilon_n$ if and only if $\lambda_n=0$. The matrix $w=\sqrt{-1}S_{2n}'\in\Sp_n(\bZ\left[1/2,\sqrt{-1}\right])$ represents the longest element of the Weyl group. For a character $\lambda\in X^\ast(H)$, we have
		\[w\lambda=\bar{\lambda}=-\lambda.\]
		This shows $\Dyn\Sp_n\cong\Spec\bZ\left[1/2\right]^n$. We have $\lambda(\bar{w}w)=1$ for every character $\lambda\in X^\ast(H\otimes_{\bZ\left[1/2\right]} \bZ\left[1/2,\sqrt{-1}\right])$ since $\bar{w}w=I_{2n}$.
	\end{ex}
	\begin{ex}[$\Sp(p,q)$, $p+q=n\geq 1$]
		Recall $H=T$ in this case. The ordered basis $\{\epsilon_1,\epsilon_2,\ldots,\epsilon_n\}$ gives rise to the positive system of type $C_n$ and its simple system in \cite{MR1920389} Appendix C.
		We have the same formulas as the $\Sp_n$ case for the pairing of characters of $H_s$ with simple coroots. The matrix $J_n$ represents the longest element of the Weyl group. For a character $\lambda\in X^\ast(H)$, we have $w\lambda=\bar{\lambda}=-\lambda$.
		This shows $\Dyn\Sp(p,q)\cong\Spec\bZ\left[1/2\right]^n$. Observe that $\lambda(\bar{w}w)=(-1)^{\sum_{i=1}^{2n}\lambda_i}$ for a character $\lambda\in X^\ast(H\otimes_{\bZ\left[1/2\right]} \bZ\left[1/2,\sqrt{-1}\right])$ since $J_n^2=-I_{2n}$. In particular, $\lambda(\bar{w}w)=1$ if and only if $\sum_{i=1}^{2n}\lambda_i$ is even.
	\end{ex}
	\begin{ex}[$\SO(2p,2q)$, $n=p+q\geq 2$ even]\label{ex:soeveneveneven}
		Recall $H=T$ in this case. The ordered basis $\{\epsilon_1,\epsilon_2,\ldots,\epsilon_n\}$ gives rise to the positive system of type $D_n$ and its simple system in \cite{MR1920389} Appendix C. For $1\leq i \leq n-1$, $\lambda=(\lambda_i)\in X^\ast(H_s)$ annihilates $(e_i-e_{i+1})^\vee=\epsilon_i-\epsilon_{i+1}$ if and
		only if $\lambda_i=\lambda_{i+1}$. A character $\lambda$ annihilates $(e_{n-1}+e_n)^\vee$ if and only if $\lambda_n=-\lambda_{n-1}$. A representative of the longest element is given by
		\[w=\diag(1,-1,1,-1,\ldots,1,-1)\in
		\SO(2p,2q,\bZ\left[1/2\right]).\]
		For $\lambda\in X_\ast(H\otimes_{\bZ\left[1/2\right]}\bZ\left[1/2,\sqrt{-1}\right])$, we have $\bar{\lambda}=w\lambda=-\lambda$. In particular, $\Dyn\SO(2p,2q)\cong\Spec\bZ\left[1/2\right]^n$. We have $\lambda(\bar{w}w)=1$ for every character $\lambda\in X^\ast(H\otimes_{\bZ\left[1/2\right]} \bZ\left[1/2,\sqrt{-1}\right])$ since $\bar{w}w=I_{2n}$.
	\end{ex}
	\begin{ex}[$\SO(2p,2q)$, $n=p+q\geq 1$ odd]
		Define positive and simple systems in a similar way to Example \ref{ex:soeveneveneven}. We have the same formulas as Example \ref{ex:soeveneveneven} for the pairing of characters of $H_s$ with simple coroots. A representative of the longest element is given by
		\[w=\diag(1,-1,1,-1,\ldots,1,-1,1,1)\in
		\SO(2p,2q,\bZ\left[1/2\right]).\]
		For $\lambda\in X_\ast(H\otimes_{\bZ\left[1/2\right]}\bZ\left[1/2,\sqrt{-1}\right])$, we have
		\[\bar{\lambda}=-\lambda\]
		\[w\lambda=(-\lambda_1,-\lambda_2,\ldots,-\lambda_{n-1},
		\lambda_n).\]
		Hence the $\ast$-action on the simple system is given by
		\[\begin{array}{ll}
			\overline{w(e_i-e_{i+1})}=e_i-e_{i+1}&(1\leq i\leq n-2)
		\end{array}\]
		\[\overline{w(e_{n-1}-e_n)}=e_{n-1}+e_n\]
		\[\overline{w(e_{n-1}+e_n)}=e_{n-1}-e_n.\]
		This shows
		\[\Dyn\SO(2p,2q)\cong\left\{\begin{array}{ll}
			\Spec\bZ\left[1/2\right]^{n-2}\coprod
			\Spec\bZ\left[1/2,\sqrt{-1}\right]
			&(n\geq 3)\\
			\emptyset
			&(n=1).
		\end{array}\right.\]
		A character $\lambda\in X^\ast(H\otimes_{\bZ\left[1/2\right]} \bZ\left[1/2,\sqrt{-1}\right])$ satisfies $\bar{\lambda}=w\lambda$ if and only if $\lambda_n=0$. In this case, we have $\lambda(\bar{w}w)=1$ since $\bar{w}w=I_{2n}$.
	\end{ex}
	\begin{ex}[$\SO(2p+1,2q+1)$, $n=p+q\in 2\bN$]\label{ex:sooddoddeven}
		When $n=0$ (equivalently, $p=q=0$), we have an isomorphism
		\[\bG_m\cong\SO(1,1);~t\mapsto\left(\begin{array}{cc}
			\frac{t+t^{-1}}{2} & \frac{t-t^{-1}}{2} \\
			\frac{t-t^{-1}}{2} & \frac{t+t^{-1}}{2}
		\end{array}
		\right)\]
		over $\bZ\left[1/2\right]$. Therefore we have no roots, $\Dyn\SO(1,1)=\emptyset$. We may put $w=e$. Then we have $\bar{\lambda}=w\lambda=\lambda$ and $\lambda(\bar{w}w)=1$ for any $\lambda\in X^\ast(\SO(1,1))\cong\bZ$.
		
		In the rest, assume $n\geq 2$. The image of $X_\ast(T\otimes_{\bZ\left[1/2\right]} \bZ\left[1/2,\sqrt{-1}\right])$ in $X_\ast(H\otimes_{\bZ\left[1/2\right]} \bZ\left[1/2,\sqrt{-1}\right])\cong\bZ^{n+1}$ is
		\[\{\mu=(\mu_i)\in \bZ^n:~\mu_{p+1}=0\}.\]
		Under this identification, 
		\[\{\epsilon_1,\epsilon_2,\ldots,\epsilon_p,
		\epsilon_{p+2},\epsilon_{p+3},\ldots,\epsilon_{n+1}\}\]
		is an ordered free basis of $X_\ast(T\otimes_{\bZ\left[1/2\right]} \bZ\left[1/2,\sqrt{-1}\right])$. The attached positive and simple systems are given by
		\[\Delta^+=\{e_i\pm e_j:~1\leq i<j\leq n+1,i\neq p+1\}\cup\{\pm e_{p+1} +e_j:~p+1<j\leq n+1\}\]
		\[\Pi=\left\{\begin{array}{ll}
			\{e_i-e_{i+1}:~1\leq i\leq n,i\neq p,p+1\}\\
			\cup\{e_p-e_{p+2},e_{n+1}\pm e_{p+1}\}&(p\neq 0,n);\\
			\{e_i-e_{i+1}:~2\leq i\leq n\}\cup\{e_{n+1}\pm e_1\}
			&(p=0);\\
			\{e_i-e_{i+1}:~1\leq i\leq n\}
			\cup\{e_n+e_{n+1}\}&(p=n).
		\end{array}\right.\]
		For $1\leq i \leq n$, $\lambda=(\lambda_i)\in X^\ast(H_s)$ annihilates $(e_i-e_{i+1})^\vee=\epsilon_i-\epsilon_{i+1}$ if and
		only if $\lambda_i=\lambda_{i+1}$. For $p\neq 0,n$, $\lambda$ annihilates $(e_p-e_{p+2})^\vee=\epsilon_p-\epsilon_{p+2}$ (resp.~
		$(e_{n+1}-e_{p+1})^\vee=\epsilon_{n+1}-\epsilon_{p+1}$, $(e_{n+1}+e_{p+1})^\vee=\epsilon_{n+1}+\epsilon_{p+1}$) if and only if $\lambda_p=\lambda_{p+2}$ (resp.~$\lambda_{n+1}=\lambda_{p+1}$, $\lambda_{n+1}=-\lambda_{p+1}$). A
		character $\lambda$ annihilates $(e_n+e_{n+1})^\vee$ (resp.\ $(e_{n+1}-e_1)^\vee$, $(e_{n+1}+e_1)^\vee$) if and only if $\lambda_{n+1}=-\lambda_n$ (resp.~$\lambda_{n+1}=\lambda_1$, $\lambda_{n+1}=-\lambda_1$). The longest element of the Weyl group is represented by
		\[w=\diag(\overbrace{1,-1,1,-1,\ldots,1,-1}^{2p},(-1)^p,(-1)^q,\overbrace{1,-1,1,-1,\ldots,1,-1}^{2q}).\]
		For $\lambda\in X_\ast(H\otimes_{\bZ\left[1/2\right]}\bZ\left[1/2,\sqrt{-1}\right])$, we have
		\[w\lambda=\bar{\lambda}=(-\lambda_1,-\lambda_2,\ldots,-\lambda_p,\lambda_{p+1},-\lambda_{p+2},\ldots,-\lambda_{n+1}).\]
		This shows
		\[\Dyn\SO(2p+1,2q+1) \cong \Spec\bZ\left[1/2\right]^{n+1}.\]
		We have $\lambda(\bar{w}w)=1$ for every character $\lambda\in X^\ast(H\otimes_{\bZ\left[1/2\right]} \bZ\left[1/2,\sqrt{-1}\right])$ since $\bar{w}w=I_{2n+2}$.
	\end{ex}
	\begin{ex}[$\SO(2p+1,2q+1)$, $n=p+q\in 2\bN+1$]
		Define $\Delta^+$, $\Pi$, and $w$ in a similar way to the above. We have the same formulas as Example \ref{ex:sooddoddeven} for the pairing of characters of $H_s$ with simple coroots. For $\lambda\in X_\ast(H\otimes_{\bZ\left[1/2\right]}\bZ\left[1/2,\sqrt{-1}\right])$, we have
		\[w\lambda=-\lambda\]
		\[\bar{\lambda}=(-\lambda_1,-\lambda_2,\ldots,-\lambda_p,\lambda_{p+1},-\lambda_{p+2},\ldots,-\lambda_{n+1}).\]
		The $\ast$-involution on $\Pi$ is given as follows:
		\begin{description}
			\item[$pq\neq 0$]
			\[\begin{array}{ll}
				\overline{w(e_i-e_{i+1})}=e_i-e_{i+1}
				&(1\leq i\leq n,i\neq p,p+1)
			\end{array}\]
			\[\overline{w(e_p-e_{p+2})}=e_p-e_{p+2}\]
			\[\overline{w(e_{n+1}-e_{p+1})}=e_{n+1}+e_{p+1}\]
			\[\overline{w(e_{n+1}+e_{p+1})}=e_{n+1}-e_{p+1}.\]
			\item[$p=0$]
			\[\begin{array}{ll}
				\overline{w(e_i-e_{i+1})}=e_i-e_{i+1}
				&(2\leq i\leq q)
			\end{array}\]
			\[\overline{w(e_{q+1}-e_1)}=e_{q+1}+e_1\]
			\[\overline{w(e_{q+1}+e_1)}=e_{q+1}-e_1.\]
			\item[$q=0$]
			\[\begin{array}{ll}
				\overline{w(e_i-e_{i+1})}=e_i-e_{i+1}
				&(1\leq i\leq p-1)
			\end{array}\]
			\[\overline{w(e_p-e_{p+1})}=e_p+e_{p+1}\]
			\[\overline{w(e_p+e_{p+1})}=e_p-e_{p+1}.\]
		\end{description}
		This shows
		\[\Dyn\SO(2p+1,2q+1)\cong\Spec\bZ\left[1/2\right]^{n-1}
		\coprod\Spec\bZ\left[1/2,\sqrt{-1}\right].\]
		A character $\lambda\in X^\ast(H\otimes_{\bZ\left[1/2\right]} \bZ\left[1/2,\sqrt{-1}\right])$ satisfies $\bar{\lambda}=w\lambda$ if and only if $\lambda_{p+1}=0$. In this case, we have $\lambda(\bar{w}w)=1$ since $\bar{w}w=I_{2n+2}$.
	\end{ex}
	\begin{ex}[$\SO^\ast(4n)$, $n\geq 1$]
		Recall that $H=T$ in this case. The ordered basis $\{\epsilon_1,\epsilon_2,\ldots,\epsilon_{2n}\}$ gives rise to the positive system of type $D_{2n}$ and its simple system in \cite{MR1920389} Appendix C. For $1\leq i \leq n-1$, $\lambda=(\lambda_i)\in X^\ast(H_s)$ annihilates $(e_i-e_{i+1})^\vee=\epsilon_i-\epsilon_{i+1}$ if and
		only if $\lambda_i=\lambda_{i+1}$. A character $\lambda$ annihilates $(e_{n-1}+e_n)^\vee=\epsilon_{n-1}+\epsilon_n$ if and only if $\lambda_n=-\lambda_{n-1}$.
		The matrix
		\[w=\sqrt{-1}\otimes\sqrt{-1}\diag(I_{2n},-I_{2n})\in
		\SO^\ast\left(4n,\bZ\left[1/2,\sqrt{-1}\right]\right)\]
		is a representative of the longest element of the Weyl group. For a character $\lambda=(\lambda_i)$, we have
		\[\bar{\lambda}=w\lambda=-\lambda.\]
		This shows that $\Dyn\SO^\ast(4n)\cong\Spec\bZ\left[1/2\right]^{2n}$. A character $\lambda\in X^\ast(H\otimes_{\bZ\left[1/2\right]} \bZ\left[1/2,\sqrt{-1}\right])$ satisfies $\lambda(\bar{w}w)=1$ if and only if $\sum_{i=1}^{2n}\lambda_i$ is even since $\bar{w}w=-w^2=-I_{4n}$.
	\end{ex}
	\begin{ex}[$\SO^\ast(4n+2)$, $n\geq 0$]
		Define positive and simple systems in a similar way to the $\SO^\ast(4n)$ case. We have the same formulas as the $\SO^\ast(4n)$ case for the pairing of characters of $H_s$ with simple coroots. The matrix
		\[w=\diag(\sqrt{-1}\otimes \sqrt{-1}I_{2n},1,-\sqrt{-1}\otimes \sqrt{-1}I_{2n},1)\in \SO^\ast(4n+2,\bZ\left[1/2,\sqrt{-1}\right])\]
		is a representative of the longest element of the Weyl group. For a character $\lambda=(\lambda_i)$, we have
		\[\bar{\lambda}=-\lambda\]
		\[w\lambda=(-\lambda_1,-\lambda_2,\ldots,-\lambda_{2n},
		\lambda_{2n+1}).\]
		In particular, the action of $\bZ/2\bZ$ on the simple system is given by
		\[\overline{w(e_i-e_{i+1})}=\left\{\begin{array}{ll}
			e_i-e_{i+1} & (1\leq i\leq 2n-1)\\
			e_{2n}+e_{2n+1}&(i=2n)
		\end{array}\right.\]
		\[\overline{w(e_{2n}+e_{2n+1})}=e_{2n}-e_{2n+1}.\]
		This implies
		\[\Dyn\SO^\ast(4n+2)\cong\left\{\begin{array}{ll}
			\Spec\bZ\left[1/2\right]^{2n-1}\coprod
			\Spec\bZ\left[1/2,\sqrt{-1}\right]
			&(n\geq 1)\\
			\emptyset
			&(n=0).
		\end{array}\right.\]
		A character $\lambda\in X^\ast(H\otimes_{\bZ\left[1/2\right]} \bZ\left[1/2,\sqrt{-1}\right])$ satisfies $\bar{\lambda}=w\lambda$ if and only if $\lambda_{2n+1}=0$. We have $\lambda(\bar{w}w)=1$ if and only if $\sum_{i=1}^{2n}\lambda_i$ is even since $\bar{w}w=\diag(-I_{2n},1,-I_{2n},1)$.
	\end{ex}
	\subsection{Restriction case}
	In this section, we run the program for group schemes obtained by the Weil restriction as a generalization of the models of complex groups.
	
	Fix a Galois extension $k\to k'$ of Galois group $\Gamma$ such that $\Spec k'$ is connected. Let $(G',H')$ be a split reductive group scheme over $k'$. We plan to compute $\Dyn\Res_{k'/k} G'$, study the character group of parabolic subgroups of $\Res_{k'/k} G'$ (whose types are defined over $k$), and determine when $\cL_\lambda$ admits a descent datum by a minor modification of the former section:
	\begin{enumerate}
		\item Fix s simple system $\vec{\Pi}$ of $((\Res_{k'/k} G')\otimes_k k',(\Res_{k'/k} H')\otimes_k k')$.
		\item Find $\vec{w}_\sigma$ such that ${}^\sigma\vec{\Pi}=\vec{w}_\sigma\vec{\Pi}$ for each $\sigma\in\Gamma$.
		\item Compute $\Dyn \Res_{k'/k} G'$.
		\item For $x\in(\type \Res_{k'/k} G')(k)$, find a parabolic subgroup $(\Res_{k'/k} G')\otimes_k k'$ whose type is $x|_{\Spec k'}$.
		\item Compute $\langle\alpha^\vee,\lambda\rangle$ for simple roots $\alpha$ and characters $\lambda$ to determine the character group of parabolic subgroups of Stage 5.
		\item Study when the equality ${}^\sigma\lambda=\vec{w}_\sigma\lambda$ holds.
		\item Prove that if the equality of 6 holds, $\beta_\lambda$ is trivial.
	\end{enumerate}
	For the complete classification, see Theorem \ref{thm:uniqueness} and Remark \ref{rem:associatedbundle}.
	
	Without loss of generality, we may choose a reductive group scheme $G$ and a maximal torus $H$ such that there are compatible isomorphisms
	\[G'\cong G\otimes_k k'\]
	\[H'\cong H\otimes_k k'.\]
	Notice that
	\[((\Res_{k'/k} (G\otimes_k k'))\otimes_k k',(\Res_{k'/k} (H\otimes_k k'))\otimes_k k')\]
	is split. Under the identification
	\[(\Res_{k'/k} (H\otimes_k k'))\otimes_k k'\cong \prod_{\tau\in\Gamma} {}^\tau(H\otimes_k k')=\prod_{\tau\in\Gamma} H\otimes_k k',\]
	the set of roots is given by
	\begin{flalign*}
		&\Delta((\Res_{k'/k} (G\otimes_k k'))\otimes_k k',
		(\Res_{k'/k} (H\otimes_k k'))\otimes_k k')\\
		&=\{\alpha\sigma\in \prod_{\tau\in\Gamma} 
		X^\ast(H\otimes_k k'):~\sigma\in\Gamma,
		\alpha\in\Delta(G\otimes_k k',H\otimes_k k')\}.
	\end{flalign*}
	It is clear that $(\alpha\sigma)^\vee=\alpha^\vee\sigma$. Choose a simple system $\Pi$ of $(G\otimes_k k',H\otimes_k k')$. Define a simple system of $(\Res_{k'/k} G\otimes_k k')\otimes_k k'$ by
	\[\vec{\Pi}=\coprod_\tau \Pi\tau=
	\{\alpha\tau\in\prod_{\tau\in\Gamma} X^\ast(H\otimes_k k'):~\alpha\in\Pi,\tau\in\Gamma\}.\]
	
	For $\sigma\in\Gamma$, choose $w_\sigma\in N_{G(k')}(H\otimes_k k')$ such that ${}^\sigma \Pi=w_\sigma\Pi$. Define $\vec{w}_\sigma\in \prod_{\tau\in\Gamma} G(k')$ by $\vec{w}_\sigma=\sigma((w_\tau))(w_\tau)^{-1}=(\sigma(w_{\sigma^{-1}\tau})^{-1}w_\tau)$. We remark that we have $\sigma(w_{\sigma^{-1}\tau})^{-1}w_\tau=w_\sigma$ in $W(G,H)(k')$ by the cocycle condition. For $\alpha\in\Pi$ and $\sigma,\tau\in\Gamma$, we have
	\[{}^{\sigma^{-1}} (\vec{w}_\sigma(\alpha\tau))
	={}^{\sigma^{-1}}(w_\sigma\alpha\tau)
	={}^{\sigma^{-1}}(w_\sigma\alpha)\sigma^{-1}\tau,\]
	which implies the following results:
	\begin{thm}\label{thm:computedyngrescase}
		\begin{enumerate}
			\renewcommand{\labelenumi}{(\arabic{enumi})}
			\item We have ${}^\sigma \vec{\Pi}=\vec{w}_\sigma\vec{\Pi}$ and $\Dyn\Res_{k'/k} G\cong \coprod_{\alpha\in\Pi} \Spec k'$.
			\item The set $(\type \Res_{k'/k} G)(k)$ is bijective to the power set of \[\{\{w_\tau^{-1}{}^\tau\alpha\tau\in \prod_{\tau\in\Gamma} X^\ast(H\otimes_k k')
			:~\tau\in\Gamma\}\in\Set:~\alpha\in\Pi\}\cong\Pi.\]
			\item For each subset $\Pi'\subset \Pi$, let $P'_{\Pi'}\subset G'$ be the parabolic subgroup corresponding to $\Pi'$. Then the set of roots of $(w_{\tau})^{-1}(\prod_{\tau\in\Gamma} {}^\tau P'_{\Pi'}) (w_\tau)$ contain $\vec{\Pi}$. Moreover, we have $t((w_{\tau})^{-1}(\prod_{\tau\in\Gamma} {}^\tau P'_{\Pi'}) (w_\tau))=\Pi'$ in $(\type \Res_{k'/k} G')(k)$ under the identification of (2).
		\end{enumerate}
	\end{thm}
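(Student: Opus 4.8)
The plan is to deduce everything from Theorem~\ref{thm:dynkin} applied to the reductive $k$-group scheme $\Res_{k'/k}(G\otimes_k k')$, equipped with the split maximal torus $\Res_{k'/k}(H\otimes_k k')$, the simple system $\vec{\Pi}$, and the elements $\vec{w}_\sigma$. The only genuine input is the generalized Satake action of $\Gamma$ on $\vec{\Pi}$, and this is exactly what the displayed identity ${}^{\sigma^{-1}}(\vec{w}_\sigma(\alpha\tau))={}^{\sigma^{-1}}(w_\sigma\alpha)\,\sigma^{-1}\tau$ delivers: reading it with $\sigma$ replaced by $\sigma^{-1}$ (so that $\vec{w}_{\rho^{-1}}=\vec{w}_\sigma$ for $\rho=\sigma^{-1}$), one sees that under the identification $\vec{\Pi}=\Pi\times\Gamma$ the generalized Satake action of $\rho$ is the product of the generalized Satake action of $\rho$ on $\Pi$ (first factor) with left translation by $\rho$ on $\Gamma$ (second factor). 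In particular it is free on the $\Gamma$-factor, so every $\Gamma$-orbit has $|\Gamma|$ elements and meets $\Pi\times\{e\}$ in exactly one point.

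For (1): applying ${}^\sigma$ to the displayed identity and tracking the component shift via Lemma~\ref{lem:galtwistofreschar} gives $\vec{w}_\sigma(\alpha\tau)=(w_\sigma\alpha)\tau$, whence $\vec{w}_\sigma\vec{\Pi}=(w_\sigma\Pi)\times\Gamma=({}^\sigma\Pi)\times\Gamma={}^\sigma\vec{\Pi}$. For the Dynkin scheme, Theorem~\ref{thm:dynkin}(1) says its coordinate ring is $(\prod_{\alpha\tau\in\vec{\Pi}}k')^\Gamma$; since the action is free on the $\Gamma$-factor the product decomposes over the $|\Pi|$ orbits, and on each orbit the $\Gamma$-invariants of $\prod_{\sigma\in\Gamma}k'$ for the action $(c_\sigma)\mapsto(\sigma(c_{\sigma^{-1}\tau}))$ are the diagonal copy of $k'$; hence the coordinate ring is $\prod_{\alpha\in\Pi}k'$, i.e.\ $\Dyn\Res_{k'/k}G\cong\coprod_{\alpha\in\Pi}\Spec k'$.

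For (2): Theorem~\ref{thm:dynkin}(2) identifies $(\type\Res_{k'/k}G)(k)$ with the power set of $\Gamma\backslash\vec{\Pi}$, and freeness on the $\Gamma$-factor gives $\Gamma\backslash\vec{\Pi}\cong\Pi$ by sending an orbit to its unique representative in $\Pi\times\{e\}$. It remains to put the orbit of $\alpha e$ in the advertised form: using $w_{\tau^{-1}}={}^{\tau^{-1}}(w_\tau^{-1})$ (from the cocycle relation) and the compatibility of the Galois twist with the Weyl action on characters, one rewrites ${}^\tau(w_{\tau^{-1}}\alpha)=w_\tau^{-1}({}^\tau\alpha)$, so the orbit of $\alpha e$ is $\{w_\tau^{-1}\,{}^\tau\alpha\,\tau:\tau\in\Gamma\}$, as claimed.

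For (3): the root set of $\prod_\tau{}^\tau P'_{\Pi'}$ with respect to $\prod_\tau{}^\tau H'$ is $\coprod_\tau({}^\tau\Phi)\tau$, where $\Phi\supseteq\Pi$ is the parabolic subset of $\Delta(G\otimes_k k',H\otimes_k k')$ attached to $\Pi'$; conjugating by $(w_\tau)$ replaces ${}^\tau\Phi$ by $w_\tau^{-1}({}^\tau\Phi)\supseteq w_\tau^{-1}({}^\tau\Pi)=\Pi$, so the root set of $(w_\tau)^{-1}(\prod_\tau{}^\tau P'_{\Pi'})(w_\tau)$ contains $\coprod_\tau\Pi\tau=\vec{\Pi}$. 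Since that root set contains $\vec{\Pi}$, the parabolic contains the Borel attached to $\vec{\Pi}$, and its type is read off from the simple roots of its Levi; as $\alpha\mapsto w_\tau^{-1}({}^\tau\alpha)$ is a bijection of $\Delta$ fixing $\Pi$, these simple roots are $\{w_\tau^{-1}({}^\tau\alpha)\tau:\alpha\in\Pi',\ \tau\in\Gamma\}$, which under the bijection of (2) is exactly $\Pi'$, so $t((w_\tau)^{-1}(\prod_\tau{}^\tau P'_{\Pi'})(w_\tau))=\Pi'$. The main technical burden throughout is the bookkeeping of Galois twists and of the component permutations $s_\sigma$ in $\prod_{\tau\in\Gamma}(-)$ needed to secure the displayed identity, together with the identification in (3) of the Levi's simple roots with the orbit representatives of (2); once these are in hand the statements are formal consequences of Theorem~\ref{thm:dynkin}.
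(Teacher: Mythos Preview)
Your proof is correct and follows the same route as the paper: both arguments use the displayed identity ${}^{\sigma^{-1}}(\vec{w}_\sigma(\alpha\tau))={}^{\sigma^{-1}}(w_\sigma\alpha)\,\sigma^{-1}\tau$ to read off the generalized Satake action on $\vec{\Pi}$ and then invoke Theorem~\ref{thm:dynkin}, and for (3) both compute the root set of the conjugated parabolic as $\coprod_\tau w_\tau^{-1}\,{}^\tau\Phi\,\tau$. The paper in fact only writes out (3) explicitly and leaves (1) and (2) as immediate consequences of the displayed identity, so your version simply makes those steps explicit.
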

	\begin{proof}
		We only prove (3). Recall that we have $(\Res_{k'/k} P'_{\Pi'})\otimes_k k'\cong\prod_{\tau\in\Gamma} {}^\tau P'_{\Pi'}$. If we write $\Phi$ for the set of roots of $P'_{\Pi'}$ relative to $H\otimes_k k'$, then that of
		\[(w_{\tau})^{-1}(\prod_{\tau\in\Gamma} {}^\tau P'_{\Pi'}) (w_\tau)\]
		relative to $\prod_{\tau\in\Gamma} H\otimes_k k'$
		is $\coprod_{\tau\in\Gamma} w^{-1}_\tau {}^\tau \Phi\tau$. In particular, $(w_{\tau})^{-1}(\prod_{\tau\in\Gamma} {}^\tau P'_{\Pi'}) (w_\tau)$ contains all simple roots of $\prod_{\tau\in\Gamma} G\otimes_k k'$. Moreover, we have
		\[t((w_{\tau})^{-1}(\prod_{\tau\in\Gamma} {}^\tau P'_{\Pi'}) (w_\tau))\\
		=\coprod_{\tau\in\Gamma} w_\tau^{-1} {}^\tau \Pi'\tau.\]
		This completes the proof.
	\end{proof}
	Let $P'_{\Pi'}$ be as in Theorem \ref{thm:computedyngrescase} (3). Then we can determine the character group of $(w_{\tau})^{-1}(\prod_{\tau\in\Gamma} {}^\tau P'_{\Pi'}) (w_\tau)$ by the following obvious equality:
	\begin{prop}\label{prop:extcondition}
		For $(\lambda_\tau)\in \prod_{\tau\in\Gamma} X^\ast(H\otimes_k k')$, $\alpha\in \Pi$, and $\sigma\in\Gamma$, we have
		\[\langle(w_\sigma^{-1}{}^\sigma\alpha\sigma)^\vee,
		(\lambda_\tau)\rangle
		=\langle{}^\sigma\alpha^\vee,
		w_\sigma\lambda_\sigma\rangle.\]
	\end{prop}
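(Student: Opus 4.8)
The plan is to unwind the identifications of Lemma \ref{lem:galtwistofreschar} and Proposition \ref{prop:galactionofchargrprestorus}, reduce both sides to a pairing on a single Galois twist of $H\otimes_k k'$, and then invoke the $W$-invariance of the canonical pairing. First I would record that, under the identification $X^\ast((\Res_{k'/k}(H\otimes_k k'))\otimes_k k')\cong\prod_{\tau\in\Gamma}X^\ast({}^\tau(H\otimes_k k'))$ of Lemma \ref{lem:galtwistofreschar} and the dual one on cocharacters, the canonical pairing becomes the diagonal one $\langle(\mu_\tau),(\nu_\tau)\rangle=\sum_{\tau\in\Gamma}\langle\mu_\tau,\nu_\tau\rangle$, each summand being the canonical pairing of $X_\ast$ and $X^\ast$ of ${}^\tau(H\otimes_k k')$ (compatibly with the canonical isomorphism ${}^\tau(H\otimes_k k')\cong H\otimes_k k'$, since $H$ is defined over $k$). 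This is immediate from applying Lemma \ref{lem:galtwistofreschar} (1) with $A'=k'$, which turns $(\Res_{k'/k}\bG_m)\otimes_k k'$ into $\prod_\tau\bG_m$. Given this, the root $w_\sigma^{-1}{}^\sigma\alpha\sigma$ is supported on the $\sigma$-factor, so by $(\beta\tau)^\vee=\beta^\vee\tau$ (already recorded in the excerpt) its coroot $(w_\sigma^{-1}{}^\sigma\alpha\sigma)^\vee=(w_\sigma^{-1}{}^\sigma\alpha)^\vee\sigma$ is supported on the $\sigma$-factor as well, and pairing it with $(\lambda_\tau)$ only sees the $\sigma$-component:
\[\langle(w_\sigma^{-1}{}^\sigma\alpha\sigma)^\vee,(\lambda_\tau)\rangle=\langle(w_\sigma^{-1}{}^\sigma\alpha)^\vee,\lambda_\sigma\rangle,\]
the right-hand pairing now taking place in the root datum of ${}^\sigma(G\otimes_k k',H\otimes_k k')$, identified with that of $(G\otimes_k k',H\otimes_k k')$.

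Next I would identify $(w_\sigma^{-1}{}^\sigma\alpha)^\vee$. Since the canonical isomorphism ${}^\sigma(G\otimes_k k',H\otimes_k k')\cong(G\otimes_k k',H\otimes_k k')$ is an isomorphism of split reductive group schemes, it matches root data, whence $({}^\sigma\alpha)^\vee={}^\sigma(\alpha^\vee)$; and since $w_\sigma\in N_{G(k')}(H\otimes_k k')$ acts on $X_\ast$ and $X^\ast$ through $W(G,H)(k')$, which preserves the root datum, one gets $(w_\sigma^{-1}{}^\sigma\alpha)^\vee=w_\sigma^{-1}({}^\sigma\alpha)^\vee=w_\sigma^{-1}\,{}^\sigma\alpha^\vee$. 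Finally I would use that the canonical pairing is invariant under the $W(G,H)(k')$-action: conjugation by $w_\sigma$ is an automorphism of $H\otimes_k k'$ and $\langle-,-\rangle$ is functorial, so $\langle w_\sigma^{-1}\mu,\nu\rangle=\langle\mu,w_\sigma\nu\rangle$. Applying this with $\mu={}^\sigma\alpha^\vee$ and $\nu=\lambda_\sigma$ yields $\langle w_\sigma^{-1}\,{}^\sigma\alpha^\vee,\lambda_\sigma\rangle=\langle{}^\sigma\alpha^\vee,w_\sigma\lambda_\sigma\rangle$, which is exactly the asserted equality (here the hypothesis that $\Spec k'$ is connected guarantees that all these pairings are defined).

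The only point requiring genuine care — and where one must resist hand-waving — is the bookkeeping of the three layers of identifications: (a) the isomorphism of Lemma \ref{lem:galtwistofreschar} presenting the Weil-restricted torus as a product of Galois twists, together with the induced diagonal description of the pairing and of coroots; (b) the canonical identification of ${}^\sigma(G\otimes_k k',H\otimes_k k')$ with $(G\otimes_k k',H\otimes_k k')$ and the compatibility of coroots with it; and (c) the fact that $w_\sigma$, a priori an element of $N_{G(k')}(H\otimes_k k')$, acts on characters and cocharacters through the Weyl group and hence preserves $\langle-,-\rangle$. None of these is deep; the equality is ``obvious'' precisely once they are all made explicit, which is why it deserves only a short argument rather than a full proof.
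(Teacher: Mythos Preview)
Your argument is correct and is exactly the unpacking the paper has in mind: the paper states the proposition as an ``obvious equality'' with no proof, and your three steps (pairing picks out the $\sigma$-component via $(\beta\tau)^\vee=\beta^\vee\tau$, compatibility of coroots with the Weyl group and Galois twists, and $W$-invariance of $\langle-,-\rangle$) are precisely the identifications one must make explicit to verify it.
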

	We next see when $(\lambda)_\tau$ is invariant under the $\ast$-action:
	\begin{prop}\label{prop:conjcondition}
		Let $(\lambda_\tau)\in \prod_{\tau\in\Gamma} X^\ast(H\otimes_k k')$. Then ${}^\sigma(\lambda_\tau)=\vec{w}_\sigma(\lambda_\tau)$ for all $\sigma\in\Gamma$ if and only if $\lambda_\sigma=w_\sigma^{-1} {}^\sigma\lambda_e$.
	\end{prop}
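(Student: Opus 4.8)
The plan is to rewrite the vector equality ``${}^\sigma(\lambda_\tau)=\vec w_\sigma(\lambda_\tau)$ for all $\sigma\in\Gamma$'' coordinatewise, using the two descriptions already available: the generalized Satake/Galois action on
\[X^\ast\big((\Res_{k'/k}(H\otimes_k k'))\otimes_k k'\big)\cong\prod_{\tau\in\Gamma}X^\ast(H\otimes_k k')\]
furnished by Proposition~\ref{prop:galactionofchargrprestorus}, together with the definition of $\vec w_\sigma$ and the remark made just above Theorem~\ref{thm:computedyngrescase} that each component $\sigma(w_{\sigma^{-1}\tau})^{-1}w_\tau$ of $\vec w_\sigma$ equals $w_\sigma$ in $W(G,H)(k')$.

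First I would observe that $\vec w_\sigma$ acts on $\prod_\tau X^\ast(H\otimes_k k')$ componentwise by conjugation: indeed $\prod_\tau G(k')$ carries the product group structure, each component of $\vec w_\sigma$ lies in $N_{G(k')}(H\otimes_k k')$, and conjugation in a product is computed slotwise. Since the $\tau$-component of $\vec w_\sigma$ represents $w_\sigma\in W(G,H)(k')$ and the normalizer action on the characters of a torus factors through its Weyl group, this gives $\vec w_\sigma(\lambda_\tau)=(w_\sigma\lambda_\tau)_\tau$. Comparing with ${}^\sigma(\lambda_\tau)=({}^\sigma\lambda_{\sigma^{-1}\tau})_\tau$ from Proposition~\ref{prop:galactionofchargrprestorus}, the condition for all $\sigma$ is equivalent to
\[{}^\sigma\lambda_{\sigma^{-1}\tau}=w_\sigma\lambda_\tau\qquad(\sigma,\tau\in\Gamma).\]

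Granting this reformulation, both implications are short. For ``$\Rightarrow$'' I would simply specialize to $\tau=\sigma$, obtaining ${}^\sigma\lambda_e=w_\sigma\lambda_\sigma$, i.e.\ $\lambda_\sigma=w_\sigma^{-1}{}^\sigma\lambda_e$. For ``$\Leftarrow$'', assuming $\lambda_\rho=w_\rho^{-1}{}^\rho\lambda_e$ for every $\rho$, I would substitute $\rho=\sigma^{-1}\tau$, use that the base-change functor ${}^\sigma(-)$ commutes past the $W(G,H)$-action and satisfies ${}^\sigma{}^{\sigma^{-1}\tau}={}^\tau$, and then invoke the cocycle relation $w_\tau={}^\sigma w_{\sigma^{-1}\tau}\cdot w_\sigma$ to rewrite ${}^\sigma w_{\sigma^{-1}\tau}=w_\tau w_\sigma^{-1}$; this yields ${}^\sigma\lambda_{\sigma^{-1}\tau}=(w_\sigma w_\tau^{-1})\,{}^\tau\lambda_e=w_\sigma\lambda_\tau$, as required.

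The only genuinely delicate point is the indexing bookkeeping: matching the slotwise $\vec w_\sigma$-action against the index shift $\tau\mapsto\sigma^{-1}\tau$ built into ${}^\sigma(\lambda_\tau)=({}^\sigma\lambda_{\sigma^{-1}\tau})$, and making sure each manipulation of the form ${}^\sigma(u\,\mu)=({}^\sigma u)({}^\sigma\mu)$ is legitimate because the $N_{G(k')}(H\otimes_k k')$-action on $X^\ast(H\otimes_k k')$ factors through $W(G,H)(k')$ and ${}^\sigma(-)$ respects all the structure in sight. There is no analytic or geometric obstacle; once these identifications are pinned down, the proof reduces to the two displayed one-line computations.
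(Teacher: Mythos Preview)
Your proposal is correct and follows essentially the same route as the paper: recall ${}^\sigma(\lambda_\tau)=({}^\sigma\lambda_{\sigma^{-1}\tau})$ and $\vec w_\sigma(\lambda_\tau)=(w_\sigma\lambda_\tau)$, specialize to $\tau=\sigma$ for the forward direction, and use the cocycle relation $\sigma(w_{\sigma^{-1}\tau})^{-1}=w_\sigma w_\tau^{-1}$ for the converse. The paper's proof is a bit more terse about why $\vec w_\sigma$ acts as $w_\sigma$ in each slot, but the substance is identical.
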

	\begin{proof}
		Recall
		\[{}^\sigma(\lambda_\tau)=
		({}^\sigma\lambda_{\sigma^{-1}\tau})\]
		\[\vec{w}_\sigma(\lambda_\tau)=(w_\sigma \lambda_\tau).\]
		The ``only if'' direction follows by putting $\tau=\sigma$. The converse follows from the cocycle condition of $w_\sigma$:
		\[{}^\sigma\lambda_{\sigma^{-1}\tau}
		={}^\sigma (w^{-1}_{\sigma^{-1}\tau} {}^{\sigma^{-1}\tau}\lambda_e)
		=\sigma(w_{\sigma^{-1}\tau})^{-1} {}^\tau \lambda_e
		=w_\sigma w_\tau^{-1} {}^\tau \lambda_e
		=w_\sigma \lambda_\tau.\]
	\end{proof}
	\begin{rem}
		Under the equivalent conditions of Proposition \ref{prop:conjcondition}, we have
		$\langle(w_\sigma^{-1}{}^\sigma\alpha\sigma)^\vee,
		(\lambda_\tau)\rangle
		=\langle\alpha^\vee,\lambda_e\rangle$ (see Proposition \ref{prop:extcondition}).
	\end{rem}
	The existence problem of descent data finishes by the following observation:
	\begin{prop}
		Let $(\lambda_\tau)\in \prod_{\tau\in\Gamma} X^\ast(H\otimes_k k')$ be a character satisfying the equivalent conditions of Proposition \ref{prop:conjcondition}. Then $\beta_{(\lambda_\tau)}$ is trivial.
	\end{prop}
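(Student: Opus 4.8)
The plan is to compute the $2$-cocycle $\beta_{(\lambda_\tau)}$ by hand for a convenient choice of conjugating elements and to see that it is literally the constant cocycle $1$; since the cohomology class of $\beta$ does not depend on the choice of conjugators (the lemma preceding Theorem~\ref{thm:descentofllambda}), this suffices. No new ideas are needed beyond the bookkeeping already set up around Theorem~\ref{thm:computedyngrescase}.

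First I would recall the relevant data. By Lemma~\ref{lem:galtwistofreschar} we may identify $(\Res_{k'/k}G')(k')$ with $\prod_{\tau\in\Gamma}G(k')$, and I write $\mathbf{w}\coloneqq(w_\tau)_{\tau\in\Gamma}$ for the element of this group built from the chosen $w_\tau\in N_{G(k')}(H\otimes_k k')$. By Theorem~\ref{thm:computedyngrescase}(3) the parabolic subgroup of $\Res_{k'/k}G'$ of the prescribed type, base-changed to $k'$, is $Q\coloneqq\mathbf{w}^{-1}\bigl(\prod_{\tau\in\Gamma}{}^\tau P'_{\Pi'}\bigr)\mathbf{w}$, and the character on $Q$ whose obstruction is $\beta_{(\lambda_\tau)}$ is the one corresponding to $(\lambda_\tau)$ under conjugation by $\mathbf{w}$. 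Now $\prod_{\tau}{}^\tau P'_{\Pi'}=(\Res_{k'/k}P'_{\Pi'})\otimes_k k'$ is $\Gamma$-stable, so ${}^\sigma\bigl(\prod_\tau{}^\tau P'_{\Pi'}\bigr)=\prod_\tau{}^\tau P'_{\Pi'}$; hence ${}^\sigma Q=({}^\sigma\mathbf{w})^{-1}\bigl(\prod_\tau{}^\tau P'_{\Pi'}\bigr)({}^\sigma\mathbf{w})=\vec{w}_\sigma\,Q\,\vec{w}_\sigma^{-1}$, where $\vec{w}_\sigma\coloneqq({}^\sigma\mathbf{w})^{-1}\mathbf{w}$ is exactly the element $\vec{w}_\sigma$ introduced before Theorem~\ref{thm:computedyngrescase} (its $\tau$-component is $\sigma(w_{\sigma^{-1}\tau})^{-1}w_\tau$). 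Thus $\vec{w}_\sigma$ is a legitimate choice of conjugator for $Q$ in the sense of Theorem~\ref{thm:descentofllambda}.

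The key point is that $\vec{w}_\sigma=({}^\sigma\mathbf{w})^{-1}\mathbf{w}$ is a non-abelian coboundary of the $\sigma$-independent $1$-cochain $\sigma\mapsto\mathbf{w}$, and therefore satisfies the cocycle identity \emph{on the nose}: ${}^\sigma(\vec{w}_\rho)\,\vec{w}_\sigma=({}^{\sigma\rho}\mathbf{w})^{-1}({}^\sigma\mathbf{w})({}^\sigma\mathbf{w})^{-1}\mathbf{w}=({}^{\sigma\rho}\mathbf{w})^{-1}\mathbf{w}=\vec{w}_{\sigma\rho}$ for all $\sigma,\rho\in\Gamma$. Consequently the element $\vec{w}_{\sigma\rho}^{-1}\,{}^\sigma(\vec{w}_\rho)\,\vec{w}_\sigma$ — which lies in $Q(k')$ (as always) and on which $(\lambda_\tau)$ is evaluated to define $\beta_{(\lambda_\tau)}$ — is the identity, so $\beta_{(\lambda_\tau)}(\sigma,\rho)=(\lambda_\tau)(e)=1$ identically. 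In particular $\beta_{(\lambda_\tau)}$ is trivial in $H^2(\Gamma,(k')^\times)$. I note that this computation does not use the hypothesis of Proposition~\ref{prop:conjcondition}; that hypothesis is what supplies condition~(i) of Theorem~\ref{thm:descentofllambda}, so that combining it with the present proposition produces an actual descent datum on $\cL_{(\lambda_\tau)}$.

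There is essentially no obstacle, only two points of care: (a) matching the conventions for the $\Gamma$-action on $\prod_{\tau}G(k')$ and for the placement of inverses, so that $({}^\sigma\mathbf{w})^{-1}\mathbf{w}$ is indeed the $\vec{w}_\sigma$ of the text (for which the identity $\sigma(w_{\sigma^{-1}\tau})^{-1}w_\tau=w_\sigma$ in $W(G,H)(k')$, already recorded there, is the relevant check); and (b) verifying the admissibility ${}^\sigma Q=\vec{w}_\sigma Q\vec{w}_\sigma^{-1}$, which as above reduces to the $\Gamma$-stability of $\prod_\tau{}^\tau P'_{\Pi'}=(\Res_{k'/k}P'_{\Pi'})\otimes_k k'$. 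Conceptually the statement just says that $Q$ is conjugate, by the global element $\mathbf{w}\in(\Res_{k'/k}G')(k')$, to the parabolic $\Res_{k'/k}P'_{\Pi'}$ which is defined over $k$; so one could alternatively transport $(\lambda_\tau)$ to a character of $\Res_{k'/k}P'_{\Pi'}$, observe that it is defined over $k$ (using the hypothesis of Proposition~\ref{prop:conjcondition} and that a character of a parabolic is determined by its restriction to a maximal torus), and invoke Example~\ref{ex:defoverk}. The direct cocycle computation above is shorter.
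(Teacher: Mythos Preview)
Your proof is correct and follows essentially the same approach as the paper: both show that the conjugators $\vec{w}_\sigma$ satisfy the cocycle identity $\sigma(\vec{w}_{\sigma'})\vec{w}_\sigma=\vec{w}_{\sigma\sigma'}$ exactly (not merely modulo $Q$), so that $\beta_{(\lambda_\tau)}(\sigma,\sigma')=(\lambda_\tau)(e)=1$. Your packaging of this as ``$\vec{w}_\sigma=({}^\sigma\mathbf{w})^{-1}\mathbf{w}$ is a non-abelian coboundary of the constant cochain $\mathbf{w}$'' is a nice way to see why the identity holds, and your closing remark that the hypothesis of Proposition~\ref{prop:conjcondition} is only needed for condition~(i) of Theorem~\ref{thm:descentofllambda}, not for the vanishing of $\beta$, is a correct and worthwhile observation that the paper does not make explicit.
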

	\begin{proof}
		The assertion follows from
		\[\begin{split}
			\vec{w}_{\sigma\sigma'}^{-1}\sigma(\vec{w}_{\sigma'})\vec{w}_{\sigma}
			&=(w_\tau) \sigma\sigma'((w_\tau))^{-1}
			\sigma(\sigma'((w_\tau))(w_\tau)^{-1})
			\sigma((w_\tau))(w_\tau)^{-1}\\
			&=(w_\tau) \sigma\sigma'((w_\tau))^{-1}
			\sigma(\sigma'((w_\tau)))\sigma((w_\tau))^{-1}
			\sigma((w_\tau))(w_\tau)^{-1}\\
			&=(e)
		\end{split}\]
		for $\sigma,\sigma'\in\Gamma$. In fact, we have
		\[\beta_{(\lambda_\tau)}(\sigma,\sigma')
		=(\lambda_\tau)(\vec{w}_{\sigma\sigma'}^{-1}\sigma(\vec{w}_{\sigma'})\vec{w}_{\sigma})=(\lambda_\tau)((e))=1.\]
	\end{proof}
	
	To end this paper, we explain the compatibility of our choice of maximal tori and simple roots with Section 3.1.
	\begin{prop}
		Let $X$ be a $k$-scheme. Let $u:X\to \Res_{k'/k}(X\otimes_k k')$ be the unit. Then the composite map
		\[X\otimes_k k'\overset{u\otimes_k k'}{\to}
		(\Res_{k'/k}(X\otimes_k k'))\otimes_k k'
		\overset{\phi}{\cong} \prod_{\tau\in\Gamma} X\otimes_k k'\]
		coincides with the diagonal map.
	\end{prop}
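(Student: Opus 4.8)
The plan is to prove the identity by evaluating both maps on $A'$-points for an arbitrary commutative $k'$-algebra $A'$ and chasing a single point through the composite, using the explicit descriptions already assembled. First I would unwind the two functors $-\otimes_k k'$ and $\Res_{k'/k}$, which are precompositions with restriction and base change of scalars: on $A'$-points the source of the composite is $(X\otimes_k k')(A')=X(A')$ with $A'$ regarded as a $k$-algebra, and the middle term is
\[
\bigl((\Res_{k'/k}(X\otimes_k k'))\otimes_k k'\bigr)(A')=(X\otimes_k k')(A'\otimes_k k')=X(A'\otimes_k k'),
\]
where $A'\otimes_k k'$ carries the $k'$-algebra structure on its right tensor factor, exactly as in Lemma \ref{lem:galtwistofreschar} (1). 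Under these identifications the unit $u$ is, on $A'$-points, induced by the $k$-algebra homomorphism $A'\to A'\otimes_k k';~a\mapsto a\otimes 1$, and $\phi_{A'}$ is the composite
\[
X(A'\otimes_k k')\xrightarrow{X(\delta_{A'})}X\Bigl(\prod_{\tau\in\Gamma}{}^{\tau^{-1}}A'\Bigr)=\prod_{\tau\in\Gamma}X({}^{\tau^{-1}}A')=\prod_{\tau\in\Gamma}{}^{\tau}(X\otimes_k k')(A'),
\]
with $\delta_{A'}\colon A'\otimes_k k'\to\prod_{\tau\in\Gamma}{}^{\tau^{-1}}A';~a\otimes c\mapsto(a\tau(c))_\tau$ from Lemma \ref{lem:galtwistofreschar} (1); here the first equality is the compatibility of a scheme with finite products of rings, and the last one uses that $X$ is defined over $k$, so that the underlying $k$-algebra of ${}^{\tau^{-1}}A'$ is again $A'$ and hence ${}^{\tau}(X\otimes_k k')\cong X\otimes_k k'$ canonically, the identification being the identity on $A'$-points. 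This last identification is the one under which the statement writes the target as $\prod_{\tau\in\Gamma}X\otimes_k k'$.

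With this set-up, the remainder is a direct composition. Given $x\in(X\otimes_k k')(A')=X(A')$, the unit carries it to the point obtained by postcomposing $x$ with $a\mapsto a\otimes 1$; applying $\delta_{A'}$ afterwards turns $x(\,\cdot\,)\otimes 1$ into $(x(\,\cdot\,)\,\tau(1))_\tau=(x(\,\cdot\,))_\tau$, so that the $\tau$-component, viewed in $X({}^{\tau^{-1}}A')$ and then restricted back to a $k$-algebra, is the original point $x$. Running this through the harmless final identification $\prod_{\tau\in\Gamma}{}^{\tau}(X\otimes_k k')\cong\prod_{\tau\in\Gamma}(X\otimes_k k')$ shows that $x$ is sent to $(x)_{\tau\in\Gamma}$, i.e.\ to its image under the diagonal map; naturality in $A'$ is automatic because every arrow in sight is.

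I do not expect a genuine obstacle: the statement is entirely formal and amounts to unwinding the definitions collected in Lemma \ref{lem:galtwistofreschar}. The only thing needing care is bookkeeping --- keeping straight which ring (a $k$-algebra, a $k'$-algebra, or a twist ${}^{\tau^{-1}}A'$) each copresheaf is being evaluated on, and which $k'$-structure $A'\otimes_k k'$ carries --- precisely the care exercised in the proof of that lemma. Alternatively one could avoid points altogether, noting that $u$ is the transpose of $\id_{X\otimes_k k'}$ under the adjunction $-\otimes_k k'\dashv\Res_{k'/k}$ and that $\phi$ intertwines this transpose with the diagonal, but the pointwise chase above is the shorter thing to write down.
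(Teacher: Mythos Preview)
Your proposal is correct and follows essentially the same approach as the paper: both reduce to the observation that the composite $k$-algebra map $A'\xrightarrow{a\mapsto a\otimes 1}A'\otimes_k k'\xrightarrow{\delta_{A'}}\prod_{\tau\in\Gamma}{}^{\tau^{-1}}A'$ is the diagonal $a\mapsto(a)_\tau$, and then apply $X$. The paper's proof simply states this two-line ring computation and concludes, whereas you spell out the identifications of $A'$-points and chase an element explicitly; the content is identical.
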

	\begin{proof}
		Let $A'$ be a $k'$-algebra. Write $u:A'\to A'\otimes_k k'$ for the unit. Then the composite $k$-algebra homomorphism $\delta_{A'}\circ u:A'\to \prod_{\tau\in\Gamma} A'$ is clearly equal to the diagonal map. Apply $X$ to deduce the assertion.
	\end{proof}
	\begin{cor}
		Let $T$ be a commutative group scheme over $k$. Then the composite map
		\[\begin{split}
			X_\ast(T\otimes_k k')
			&\overset{u\otimes_k k'}{\to}
			X_\ast((\Res_{k'/k}(X\otimes_k k'))\otimes_k k')\\
			&\overset{\phi}{\cong} X_\ast(\prod_{\tau\in\Gamma} T\otimes_k k')\\
			&\cong \prod_{\tau\in\Gamma} X_\ast(T\otimes_k k')
		\end{split}\]
		is the diagonal map.
	\end{cor}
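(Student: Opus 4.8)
The plan is to obtain this as a formal consequence of the preceding proposition by applying the cocharacter functor $X_\ast(-)=\Hom(\bG_m,-)$. First I would note that every arrow in the displayed composite is a homomorphism of commutative group $k'$-schemes: the unit $u\colon T\to\Res_{k'/k}(T\otimes_k k')$ is a homomorphism because it is a natural transformation of group-valued copresheaves, so its base change $u\otimes_k k'$ is again a homomorphism; and the isomorphism $\phi$ of Lemma \ref{lem:galtwistofreschar} (2) is a homomorphism, since it is assembled from the $k'$-algebra isomorphisms $\delta_{A'}$ and both $\Res_{k'/k}$ and the base change functors are compatible with group structures. Hence the composite $T\otimes_k k'\to\prod_{\tau\in\Gamma}T\otimes_k k'$ is a homomorphism of commutative group $k'$-schemes, and by the preceding proposition (which is exactly that statement with $X=T$) it equals the diagonal morphism.

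Next I would apply $X_\ast(-)$. It is functorial, it carries the diagonal morphism of a group scheme to the diagonal map of abelian groups, and it sends the finite product $\prod_{\tau\in\Gamma}T\otimes_k k'$ to $\prod_{\tau\in\Gamma}X_\ast(T\otimes_k k')$ via the projections, since a cocharacter of a finite product of group schemes is the same datum as a family of cocharacters indexed by $\Gamma$. Putting these together, the composite map in the statement is the diagonal embedding $X_\ast(T\otimes_k k')\hookrightarrow\prod_{\tau\in\Gamma}X_\ast(T\otimes_k k')$, as claimed.

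I do not anticipate any genuine difficulty: the argument is purely formal once the preceding proposition is available. The only points that need a word of care are that $\phi$ is a homomorphism of group schemes and not merely an isomorphism of schemes, and that the identification $X_\ast(\prod_{\tau}T\otimes_k k')\cong\prod_{\tau}X_\ast(T\otimes_k k')$ appearing in the statement is the canonical one induced by the projections; both are immediate from the relevant constructions. (One should also read $T$ for $X$ in the middle terms of the displayed composite, so that the statement is literally the case $X=T$ of the proposition.)
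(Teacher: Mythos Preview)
Your proposal is correct and matches the paper's approach: the paper states the corollary without proof, since it is the immediate consequence of applying $X_\ast(-)$ to the preceding proposition, exactly as you describe. Your remarks about $\phi$ being a group homomorphism and about the typo $X\mapsto T$ are apt but not points of substance.
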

	Assume the following conditions:
	\begin{enumerate}
		\renewcommand{\labelenumi}{(\roman{enumi})}
		\item $\Gamma=\bZ/2\bZ$.
		\item $2$ is a unit of $k$.
	\end{enumerate}
	Regard $G$ as a symmetric subgroup of $\Res_{k'/k} (G\otimes_k k')$ by the pointwise conjugation. Then $\Res_{k'/k} (H\otimes_k k')=Z_{\Res_{k'/k} G}(H)$ is a fundamental Cartan subgroup since $H$ is a maximal torus of $G$. Choose an ordered basis $\{\epsilon_i\}$ of $X_\ast(H\otimes_k k')$. Let $\Pi$ be the associated simple system of $(G\otimes_k k',H\otimes_k k')$.
	\begin{cor}
		The simple system of $(\Res_{k'/k} G\otimes_k k')\otimes_k k'\cong (G\otimes_k k')^2$ attached to the ordered basis $\{\epsilon_i\}$ via Construction \ref{cons:simplesys} is $\Pi\times\{0\}\coprod\{0\}\times\Pi$.
	\end{cor}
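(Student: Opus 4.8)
The plan is to unwind Construction \ref{cons:simplesys} for the symmetric pair $(\Res_{k'/k}(G\otimes_k k'),G)$ with maximal torus $H$ of $G$, and to reduce everything to the corresponding data for $(G\otimes_k k',H\otimes_k k')$ by means of the identifications recorded in Section 4.2, Lemma \ref{lem:galtwistofreschar}, and the corollary immediately preceding the statement. First I would collect the three relevant identifications: after base change to $k'$, the cocharacter group of the fundamental Cartan subgroup $\Res_{k'/k}(H\otimes_k k')$ is $\prod_{\tau\in\Gamma} X_\ast(H\otimes_k k')$ (Lemma \ref{lem:galtwistofreschar}); its root system is $\{\alpha\tau:\alpha\in\Delta(G\otimes_k k',H\otimes_k k'),\ \tau\in\Gamma\}$, with coroots $(\alpha\tau)^\vee=\alpha^\vee\tau$; and, by the preceding corollary applied to $T=H$, the basis $\{\epsilon_i\}$ of $X_\ast(H\otimes_k k')=X_\ast(T\otimes_k k')$ used in the construction is sent into $\prod_{\tau\in\Gamma} X_\ast(H\otimes_k k')$ by the diagonal map, that is, to the family $(\epsilon_i)_{\tau\in\Gamma}$.

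The computational core is then the identity
\[\langle(\epsilon_i)_{\tau\in\Gamma},\alpha\tau_0\rangle=\langle\epsilon_i,\alpha\rangle,\]
valid for all $i$, all $\tau_0\in\Gamma$, and all $\alpha\in\Delta(G\otimes_k k',H\otimes_k k')$: it holds because the canonical pairing on $\prod_{\tau\in\Gamma} H\otimes_k k'$ is the sum of the pairings on the factors, while $\alpha\tau_0$ is supported in the factor indexed by $\tau_0$. From this I would conclude that, in the lexicographic order attached to $\{(\epsilon_i)_{\tau\in\Gamma}\}$, a root $\alpha\tau_0$ is positive exactly when $\alpha$ is positive for the lexicographic order on $\Delta(G\otimes_k k',H\otimes_k k')$ attached to $\{\epsilon_i\}$; so, writing $\Delta^+$ for the latter positive system (so that $\Pi$ is its simple system), Construction \ref{cons:simplesys} produces on the restricted group the positive system $\coprod_{\tau\in\Gamma}\Delta^+\tau=\Delta^+\times\{0\}\coprod\{0\}\times\Delta^+$.

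It remains to pass from positive systems to simple systems. I would note that $\coprod_{\tau\in\Gamma}\Delta\tau$ is the orthogonal direct sum of two copies of $\Delta(G\otimes_k k',H\otimes_k k')$: a sum of an element of $\Delta\tau_0$ with an element of $\Delta\tau_1$ for $\tau_0\neq\tau_1$ is supported in both factors with a genuine root of $G$ in each, hence is not a root. Therefore any expression of a positive root $\alpha\tau_0$ as a sum of positive roots involves only roots of the form $\beta\tau_0$ with $\beta\in\Delta^+$, so $\alpha\tau_0$ is indecomposable precisely when $\alpha\in\Pi$. This yields the simple system $\coprod_{\tau\in\Gamma}\Pi\tau=\Pi\times\{0\}\coprod\{0\}\times\Pi$, as claimed.

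I do not expect a genuine obstacle here: the argument is essentially bookkeeping. The one point that needs care is to keep the three identifications above mutually compatible, so that the pairing computation is unambiguous; beyond that, one only invokes the elementary fact that the simple roots of an orthogonal direct sum of root systems are the union of the simple roots of the summands.
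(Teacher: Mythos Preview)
Your proof is correct and is precisely the unwinding the paper has in mind: the corollary is stated in the paper without proof, as an immediate consequence of the preceding corollary (the diagonal embedding of $X_\ast(H\otimes_k k')$) together with the description of the root system of the restricted group given at the beginning of Section~4.2. Your computation $\langle(\epsilon_i)_{\tau\in\Gamma},\alpha\tau_0\rangle=\langle\epsilon_i,\alpha\rangle$ and the passage from positive system to simple system via the orthogonal decomposition are exactly the steps that make this corollary evident.
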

	\begin{rem}
		Another possible approach to this setting is to set $\coprod_{\tau\in\Gamma} {}^\tau \Pi \tau$ for a simple system. Then Example \ref{ex:quasisplit} and Example \ref{ex:defoverk} are available. We adopted the above choice because of the future applications to $A_\fq(\lambda)$-modules.
	\end{rem}

	\section*{Acknowledgments}
	The author is grateful to Fabian Januszewski for his generosity in sharing his idea on Galois descent in representation theory of real reductive Lie groups.
	
	Thanks Hiraku Atobe for helpful comments.
	
	This work was supported by JSPS KAKENHI Grant Number 21J00023.

\end{document}